\renewcommand\theequation{\thesection.\arabic{equation}}
\numberwithin{equation}{section}
\newtheorem{thm}{Theorem}[section]
 \newtheorem{rem}[thm]{Remark}
 \newtheorem{prop}[thm]{Proposition}
 \newtheorem{lem}[thm]{Lemma}
 \newtheorem{cor}[thm]{Corollary}
\newcommand{\R}{\mathbb{R}}
\newcommand{\N}{\mathbb{N}}
\newcommand{\U}{\mathcal{U}}
\newcommand{\ra}{\rightarrow}
\newcommand{\vp}{\varphi}
\newcommand{\e}{\varepsilon}
\title{Infinite horizon sparse optimal control}
\author{Dante Kalise}
\address{Johann Radon Institute of Computational and Applied Mathematics, Austrian Academy of Sciences, Altenbergerstra{\ss}e 69, A-4040 Linz, Austria}
\email{dante.kalise@oeaw.ac.at,}
\author{Karl Kunisch}
\address{Institute for Mathematics and Scientific Computing, University of Graz, Heinrichstra{\ss}e 36, A-8010 Graz, Austria, and Johann Radon Institute of Computational and Applied Mathematics, Austrian Academy of Sciences}
\email{karl.kunisch@uni-graz.at}
\thanks{The authors acknowledge the support of the ERC-Advanced Grant OCLOC “From Open to Closed Loop Optimal Control of PDEs”.}
\author{Zhiping Rao}
\address{Johann Radon Institute of Computational and Applied Mathematics, Austrian Academy of Sciences, Altenbergerstra{\ss}e 69, A-4040 Linz, Austria}
\email{zhiping.rao@oeaw.ac.at}
\begin{document}

\maketitle

\begin{abstract}
A class of infinite horizon optimal control problems involving $L^p$-type cost functionals with $0<p\leq 1$ is discussed. The existence of optimal controls is studied for both the convex case with $p=1$ and the nonconvex case with $0<p<1$, and the sparsity structure of the optimal controls promoted by the $L^p$-type penalties is analyzed. A dynamic programming approach is proposed to numerically approximate the corresponding sparse optimal controllers. 
\end{abstract}
%\keywords{Optimal control \and Infinite horizon control \and Sparse control \and Optimality conditions \and Dynamic programming}
%\subclass{49J15 \and 49J30 \and 49K15 \and  49Lxx \and 93B52}

%All acknowledgements should be placed in the back of the paper after Conclusions..

%%%%%%%%%%%%%%%%%%%%%%%%%%%%%%%%%%%%%%%%%%%%%%%%%%%%%%%%%%%%%%%%%%%%%%%%%%%%%%%%%%%%%%%%
%%%%%%%%%%%%%%%%%%%%%%%%%%%%%%%%%%%%%%%%%%%%%%%%%%%%%%%%%%%%%%%%%%%%%%%%%%%%%%%%%%%%%%%%
\section{Introduction}
In this paper, we investigate the following infinite horizon optimal control problem: given $\lambda>0$, and $U$ a convex compact subset of $\R^m$, solve for any $x\in\R^d$ 
\begin{equation}\label{ocinfini}
\inf_{u\in L^{\infty}(0,\infty;U)} \int^{\infty}_0e^{-\lambda s}\ell(y(s),u(s))ds
\end{equation}
subject to the dynamical constraint
\begin{equation}\label{ds}
\left\{
\begin{array}{ll}
\dot y(s)=f(y(s),u(s)) & \quad\text{a.e.}\;\; s>0,\\
y(0)=x.
\end{array}
\right.
\end{equation}
Here $f:\R^d\times U\ra \R^d$ and $\ell:\R^d\times U\ra \R$ are continuous functions. Given $\gamma>0$ and $p\in(0,1]$, $\ell(\cdot,\cdot)$ is defined as follows: for any $x\in\R^d$ and $u=(u^1,\ldots,u^m)\in U$,
\[
\ell(x,u)=\ell_1(x)+\gamma\|u\|_{p}^p,
\]
with $\ell_1$ a nonnegative and strictly convex function, and
\[
\|u\|_{p}=\left(\sum^m_{k=1}|u^k|^p\right)^{1/p}.
\]

The most remarkable issue of the problem \eqref{ocinfini} is the
presence of the nonsmooth term $\|\cdot\|_p^p$ in the cost functional.
Moreover, the problem is nonconvex when $0<p<1$, which induces new
properties of the optimal controls and also makes the analysis of the
problem more complicated. The use of these functionals implies that
optimal controls can be identically $0$ on subsets of positive measure.
This is referred to as sparsity. Intuitively, the controls are switched
off completely in intervals where, in the case smooth cost functionals
were used, they would be small but nonzero. Finite-dimensional, finite
horizon, optimal control problems with nonsmooth penalizations in
the control variable were originally studied in the context of the
so-called \textsl{minimum fuel optimal control problem},
\cite{NEU65,ROS07}. In the linear case, necessary conditions for $L^1$
optimal control problems were derived in \cite{HAJ79}. Linear-quadratic
optimal control problems with an additional $L^1$-cost on the control were recently discussed in \cite{ALT15}. For the nonlinear
case, first order necessary and second order sufficient optimality
conditions for control problems involving an $L^1$-term in the cost
functional were obtained in \cite{VM06}. Applications to sparse optimal
control for multi-agent systems were addressed in \cite{CFPT15}.

For infinite-dimensional dynamics, one of the areas of application for
sparsity functionals is optimal actuator timing and placement \cite{S09,CZ13}. Vanishing of the control in temporal or spatial regions indicates that it is not worthwhile to
assert control force on the system there. Such a type of information is
not available from quadratic control penalties. Open loop, finite
horizon, optimal control problems with sparsity enhancing functionals
in the context of partial differential equations were addressed in
several papers in the recent past. Here we can just mention only a few
of them, see \cite{HSW12,CCK12,PV13}.

The analysis in the present work is focused on the existence of solutions
and the sparsity properties of infinite horizon optimal controls invoked
by $L^p$ control penalties with $0<p\leq 1$. The problem is formulated in
Section 2. To derive the existence result, we discuss the convex case
$p=1$ in Section 3, and the nonconvex case $0<p<1$ in Section 4. For the
convex case, the existence result is established for fully nonlinear
dynamics $f$. A similar result has been obtained in \cite{CS04} for
finite horizon problems. For the nonconvex case, the existence of
solutions is not guaranteed in general. We consider the case when $u$
arises linearly in the dynamics, and pose it as a time-discretized
nonconvex problem in the infinite dimensional sequence space $\ell^p$. The
existence result for the reformulated problem is then derived following
the method introduced in \cite{IK14}. Subsequently, first order
optimality conditions are derived and the sparsity structure of optimal
controls is investigated for both convex and nonconvex cases in Section
5. In \cite{VM06,ALT15} control constraints of $L^\infty$ type are
used, whereas we allow for $L^q$-constraints with $1\leq q\leq \infty$.
An example with Eikonal dynamics is analyzed in Section 6 and the
sparse region of the optimal control is explicitly given.

Turning to Section 7, we acknowledge that the numerical approximation
of infinite horizon optimal control problems is a challenging task. In
the open-loop context, the infinite horizon problem can be treated
either by a sequential approximation of finite horizon control problems
or via pseudospectral collocation methods \cite{FR08,GP09}. In the case
of closed-loop optimal controls, there exists a solid computational
framework based on the solution of the algebraic Riccati equations for
the linear quadratic case. For the more general case with nonlinear
dynamics and nonquadratic costs, computations can be carried out via
dynamic programming. More precisely, we compute the value function
associated to the control problem by solving a Hamilton-Jacobi-Bellman
(HJB) equation, and then the optimal control and the associated optimal
trajectories are reconstructed through an online feedback mapping which requires the solution of a nonlinear optimization problem. Comparing the HJB approach to
open-loop methods, it has the advantage of being in feedback form,
yielding robust controllers in the presence of perturbations . In the
HJB approach we are, of course, confronted with the so-called \textsl{curse of
dimensionality}. However, for low-dimensional dynamics, the design of
numerical schemes is well-established (we refer to \cite[Chapter 8]{FFSIAM} for an
updated introduction to this topic). In Section 7, numerical simulations
are carried out based on the algorithms introduced in
\cite{AFK15,KKK15}. For the example with Eikonal dynamics, the numerical
results confirm our analysis on the sparsity properties of the optimal
controls.
%%%%%%%%%%%%%%%%%%%%%%%%%%%%%%%%%%%%%%%%%%%%%%%%%%%%%%%%%%%%%%%%%%%%%%%%%%%%%%%%%%%%%%%%
%%%%%%%%%%%%%%%%%%%%%%%%%%%%%%%%%%%%%%%%%%%%%%%%%%%%%%%%%%%%%%%%%%%%%%%%%%%%%%%%%%%%%%%%

%%%%%%%%%%%%%%%%%%%%%%%%%%%%%%%%%%%%%%%%%%%%%%%%%%%%%%%%%%%%%%%%%%%%%%%%%%%%%%%%%%%%%%%%
%%%%%%%%%%%%%%%%%%%%%%%%%%%%%%%%%%%%%%%%%%%%%%%%%%%%%%%%%%%%%%%%%%%%%%%%%%%%%%%%%%%%%%%%
\section{The optimal control problem}
Recalling that the infinite horizon, optimal control problem is given by
\[
\inf_{u\in L^{\infty}(0,\infty;U)} \left\{J(x,u):=\int^{\infty}_0e^{-\lambda s}\ell(y(s),u(s))ds:\ \dot y(s)=f(y(s),u(s))\ \text{for}\ s>0,\ y(0)=x\right\},
\]
we make the following assumptions.
\begin{itemize}
\item[{\bf (H1)}] 
There exists $L>0$ such that 
\[
\|f(x_1,u)-f(x_2,u)\|_2\leq L\|x_1-x_2\|_2\\, \quad\text{for all}\ u\in U.
\]
\item[{\bf (H2)}]
For each $x\in\R^d$, there exists $(y^*(\cdot),u^*(\cdot))$ satisfying \eqref{ds} such that
\[
J(x,u^*)<+\infty.
\]
\end{itemize}
\begin{rem}
Assumption {\bf (H2)} is a condition on the dynamics $f$, in combination with the factor $\lambda$. It is also related to controllability assumptions. For example, consider the linear-quadratic case with
\[
f(x,u)=Ax+Bu,\ \ell_1(x)=\|x\|^2_2,
\]
where $A\in\R^{n\times n}$ and $B\in\R^{n\times m}$. Then {\bf (H2)} is satisfied if $\lambda>2\rho(A)$, where $\rho(A)$ is the spectral radius of $A$. On the other hand, {\bf (H2)} also holds if the Kalman's controllability rank condition is satisfied. In this case, for each initial point $x\in\R^d$, there exists a trajectory leading $x$ to the origin in finite time, and then the control is switched off so that the running cost stays zero from then on.
\end{rem}
The value function $v:\R^d\ra \R$ is introduced as follows:
\[
v(x):=\inf_{u\in L^{\infty}(0,\infty;U)} J(x,u),\quad \forall\,x\in\R^d.
\]
The value function satisfies the following dynamic programming principle: for any $x\in\R^d$ and $h\geq 0$,
\[
v(x)=\inf_{u\in L^{\infty}(0,h;U)}\int^{h}_0e^{-\lambda s}\ell(y(s),u(s))ds+e^{-\lambda h}v(y(h)).
\]
By standard arguments, it is deduced that $v$ is the unique viscosity solution to the stationary HJB equation:
\begin{equation}\label{HJB}
\lambda v(x)+H(x,Dv(x))=0\ \text{for}\ x\in\R^d,
\end{equation}
where the Hamiltonian $H:\R^d\times\R^d\ra\R$ is given by
\begin{equation}\label{Hamiltonian}
H(x,p)=\sup_{u\in U}\left\{-f(x,u)\cdot p-\ell(x,u)\right\}.
\end{equation}
Note that for $0<p<1$, $\ell$ is not a convex function. Therefore, the existence of a minimizer for the problem \eqref{ocinfini} is not guaranteed. In the following, we will discuss the convex case with $p=1$ and the nonconvex case with $0<p<1$ separately.
%%%%%%%%%%%%%%%%%%%%%%%%%%%%%%%%%%%%%%%%%%%%%%%%%%%%%%%%%%%%%%%%%%%%%%%%%%%%%%%%%%%%%%%%
%%%%%%%%%%%%%%%%%%%%%%%%%%%%%%%%%%%%%%%%%%%%%%%%%%%%%%%%%%%%%%%%%%%%%%%%%%%%%%%%%%%%%%%%

%%%%%%%%%%%%%%%%%%%%%%%%%%%%%%%%%%%%%%%%%%%%%%%%%%%%%%%%%%%%%%%%%%%%%%%%%%%%%%%%%%%%%%%%
%%%%%%%%%%%%%%%%%%%%%%%%%%%%%%%%%%%%%%%%%%%%%%%%%%%%%%%%%%%%%%%%%%%%%%%%%%%%%%%%%%%%%%%%
\section{The convex case: $p=1$}
In this section, the following general convexity condition is assumed.
\begin{itemize}
\item[{\bf (H3)}]
For each $x\in\R^d$, the following subset of $\R^d\times\R$ is convex:
\[
\{(f(x,u),\xi)\,:\, \xi\geq\ell(x,u),\ u\in U\}.
\]
\end{itemize}
\begin{rem}
If $f$ is affine in $u$, then {\bf (H3)} is satisfied.
\end{rem}

In \cite{AKT01}, existence of a minimizer for the problem \eqref{ocinfini} is obtained through an approximation approach in the case of control-affine dynamics. The idea is to approximate the infinite horizon problem \eqref{ocinfini} by a family of finite horizon problems. We extend this approach to the nonlinear case. For any fixed $T>0$ and $x\in\R^d$, consider the problem:
\begin{equation}\label{ocT}
 v_T(x):=\inf_{u\in L^{\infty}(0,T;U)} J_T(x,u)\,,
\end{equation}
where
\[
J_T(x,u)=\int^T_0 e^{-\lambda s}\ell(y(s),u(s))ds\,,
\]
and $(y(\cdot),u(\cdot))$ satisfies the dynamical system 
\[
\left\{
\begin{array}{ll}
\dot y(s)=f(y(s),u(s)) & \text{a.e.}\ s>0,\\
y(0)=x.
\end{array}
\right.
\]
Let $G:\R^d\times\R\ra\R^d\times\R$ be the set-valued multifunction defined by
\[
G(x,\eta):=\{(f(x,u),\lambda \eta+\xi)\,:\,\xi\geq\ell(x,u),\ u\in U\},\ \forall\,x\in\R^d,\ \eta\in\R.
\]
Given $T>0$, consider the following problem: for any $x\in\R^d$
\begin{equation}\label{ocAugmented}
 w_T(x)=\inf_{\eta} e^{-\lambda T}\eta(T),
\end{equation}
where $\eta(\cdot)$ together with a corresponding $y(\cdot)$ satisfies the following differential inclusion
\begin{equation}\label{dsAugmented}
\left\{
\begin{array}{ll}
(\dot y(s),\dot \eta(s))\in G(y(s),\eta(s)),\ \text{a.e.}\ s\in (0,T),\\
(y(0),\eta(0))=(x,0).
\end{array}
\right.
\end{equation}

\begin{lem}\label{ExistenceAugmented}
There exists a minimizer $\eta_T$ for problem \eqref{ocAugmented}.
\end{lem}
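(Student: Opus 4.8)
The plan is to apply the direct method of the calculus of variations, exploiting the convexity encoded in \textbf{(H3)} through a closure theorem for differential inclusions. First I would fix $x\in\R^d$ and take a minimizing sequence $(y_n,\eta_n)$ of admissible trajectories for \eqref{ocAugmented}--\eqref{dsAugmented}, i.e.\ $(\dot y_n,\dot\eta_n)\in G(y_n,\eta_n)$ a.e., $(y_n(0),\eta_n(0))=(x,0)$, and $e^{-\lambda T}\eta_n(T)\to w_T(x)$. By a measurable-selection argument there are measurable $u_n(\cdot)\in U$ and $\xi_n(\cdot)\geq\ell(y_n,u_n)$ with $\dot y_n=f(y_n,u_n)$ and $\dot\eta_n=\lambda\eta_n+\xi_n$. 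Since replacing $\xi_n$ by $\ell(y_n,u_n)$ can only decrease $\eta_n$ pointwise (by a Gronwall comparison, using $\xi_n\geq\ell\geq 0$), without loss of generality I would assume $\xi_n=\ell(y_n,u_n)$, which keeps the sequence minimizing.

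The second step is to derive uniform a priori bounds. Assumption \textbf{(H1)} gives the linear growth $\|f(x,u)\|_2\leq C_0+L\|x\|_2$ with $C_0=\sup_{u\in U}\|f(0,u)\|_2<\infty$, so Gronwall's inequality bounds $y_n$ uniformly on $[0,T]$ by some $R$ depending only on $\|x\|_2,L,T$; consequently $\dot y_n=f(y_n,u_n)$ is uniformly bounded and $(y_n)$ is equi-Lipschitz. On this bounded set $\ell(y_n,u_n)=\ell_1(y_n)+\gamma\|u_n\|_p^p$ is bounded (continuity of $\ell_1$, compactness of $U$), hence $\xi_n$ is bounded in $L^\infty(0,T)$, and through the representation $\eta_n(s)=\int_0^s e^{\lambda(s-\tau)}\xi_n(\tau)\,d\tau$ both $\eta_n$ and $\dot\eta_n$ are uniformly bounded, so $(\eta_n)$ is equi-Lipschitz as well.

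By Arzel\`a--Ascoli I would then extract a subsequence with $(y_n,\eta_n)\to(y,\eta)$ uniformly on $[0,T]$ and $(\dot y_n,\dot\eta_n)\rightharpoonup(\dot y,\dot\eta)$ weakly in $L^2(0,T)$; the limit is absolutely continuous with $(y(0),\eta(0))=(x,0)$, and $e^{-\lambda T}\eta_n(T)\to e^{-\lambda T}\eta(T)=w_T(x)$. The crucial and hardest step is the closure of the inclusion: I must verify $(\dot y(s),\dot\eta(s))\in G(y(s),\eta(s))$ for a.e.\ $s$. Here \textbf{(H3)} is decisive, since it makes the velocity sets $G(y,\eta)$ convex and closed (the affine shift by $\lambda\eta$ preserves convexity), while continuity of $f,\ell$ and compactness of $U$ give $G$ a closed graph. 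Combining the uniform convergence of $(y_n,\eta_n)$, the weak convergence of the velocities, and Mazur's lemma --- which turns convex combinations of the velocities into strongly convergent sequences remaining, by convexity, in $G$ --- one concludes by the standard convergence theorem for differential inclusions that the limit velocity lies in $G(y,\eta)$ a.e.

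Thus $(y,\eta)$ is admissible and attains the infimum, so $\eta_T:=\eta$ is the desired minimizer. The one delicate point to handle carefully is that the velocity sets are unbounded in the $\xi$-direction, but this causes no difficulty because the reduction $\xi_n=\ell(y_n,u_n)$ confines the relevant velocities to a bounded region where the closure argument applies; the indispensability of \textbf{(H3)} at exactly this juncture also explains why the nonconvex case $0<p<1$ must be treated by entirely different means.
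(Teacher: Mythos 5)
Your argument is correct and follows essentially the same route as the paper: reduce the slack variable to $\xi_n=\ell(y_n,u_n)$ via measurable selection (the paper's $\tilde\eta_n$ construction), obtain equi-Lipschitz bounds from \textbf{(H1)} and Gronwall, extract uniform plus weak limits by Arzel\`a--Ascoli, and close the differential inclusion using the convexity of the images of $G$ guaranteed by \textbf{(H3)}. The only cosmetic difference is that you invoke Mazur's lemma directly where the paper cites the Aubin--Cellina convergence theorem for differential inclusions, which encapsulates the same mechanism.
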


\begin{proof}
For any $x\in\R^d$, let $(\eta_n)_{n\in\N}$ be a minimizing sequence for \eqref{ocAugmented}, i.e.
\[
w_T(x)=\lim_{n\ra\infty} e^{-\lambda T}\eta_n(T).
\]
For each $n\in\N$, let $y_n$ be the corresponding trajectory such that $(y_n,\eta_n)$ satisfies \eqref{dsAugmented}. By the definition of $G$ and the selection theorem \cite[Corollary 1, pp. 91]{AC84}, there exists a measurable $u_n$ such that
\[
\left\{
\begin{array}{lll}
 \dot y_n(s)=f(y_n(s),u_n(s)) & \text{a.e.}\ s\in (0,T),\\
 \dot \eta_n(s)\geq \lambda \eta_n(s)+\ell(y_n(s),u_n(s)) & \text{a.e.}\ s\in (0,T),\\
 (y_n(0),\eta_n(0))=(x,0).
\end{array}
\right.
\]
We introduce
\[
\tilde \eta_n(s)=\int^s_0 e^{-\lambda(s'-s)}\ell(y_n(s'),u_n(s'))ds',\ \forall\,s\in [0,T],\ n\in\N,
\]
which satisfies that
\[
\frac{d}{ds}\left[e^{-\lambda s}\tilde \eta_n(s)\right]=e^{-\lambda s}\ell(y_n(s),u_n(s))\leq \frac{d}{ds}\left[e^{-\lambda s} \eta_n(s)\right],
\]
and thus,
\[
e^{-\lambda s}\tilde \eta_n(s)\leq e^{-\lambda s} \eta_n(s),\ \forall\,s\in[0,T],\ n\in\N.
\]
Note that
\[
w_T(x)\leq e^{-\lambda T}\tilde \eta_n(T)\leq e^{-\lambda T} \eta_n(T),\ n\in\N,
\]
and hence
\[
w_T(x)=\lim_{n\ra\infty} e^{-\lambda T}\tilde \eta_n(T).
\]
By setting $\xi_n(s):=e^{-\lambda s}\tilde \eta_n(s)$ for $s\in[0,T]$, we have
\[
\dot \xi_n(s)=e^{-\lambda s}\ell_n(y_n(s),u_n(s)),\ \text{a.e.}\ s\in (0,T).
\]
Since $f(x,u)$ is $L$-Lipschitz continuous w.r.t $x$ uniformly on $u$, by Gronwall inequality there exists a constant $C>0$, such that
\[
\|y_n(s)\|_{\infty}\leq e^{LT}\sqrt{\|x\|^2_2+CT},\ \forall\,s\in[0,T].
\]
Due to the continuity of $f$ and $\ell$, we deduce that $\|\dot y_n(\cdot)\|_{\infty}$ and $\|\dot \xi_n(\cdot)\|_{\infty}$ are uniformly bounded in $[0,T]$. Therefore, the Arzel\`a-Ascoli Theorem and the Dunford-Pettis Theorem imply that there exists $y_T,z\in L^1(0,T;\R^d)$ and $\xi_T,\tau\in L^1(0,T;\R)$ such that, after possibly passing to a subsequence,
\[
(y_n,\xi_n)\ra (y_T,\xi_T)  \text{ uniformly in } [0,T],\ \text{as}\ n\ra\infty,
\]
\[
(\dot y_n,\dot \xi_n)\ra (z,\tau)  \text{ weakly in } L^1(0,T;\R^{d+1}),\ \text{as}\ n\ra\infty.
\]
By definition of $\xi_n$, the following holds:
\[
(y_n,\tilde \eta_n)\ra (y_T,\tilde \eta_T)  \text{ uniformly in } [0,T],\ \text{as}\ n\ra\infty,
\]
\[
(\dot y_n,\dot{\tilde \eta}_n)\ra (z,\tilde \tau)  \text{ weakly in } L^1(0,T;\R^{d+1}),\ \text{as}\ n\ra\infty,
\]
where
\[
\tilde \eta_T(s)=e^{\lambda s}\xi_T(s),\ \tilde \tau(s)=e^{\lambda s}(\tau(s)+\lambda \xi_T(s)).
\]
Recall that $(y_n(\cdot),\tilde \eta(\cdot))$ satisfies \eqref{dsAugmented} and $G$ is locally Lipschitz continuous with convex images. By \cite[Theorem 1, pp. 60]{AC84}, we deduce that
\[
(\dot y_n,\dot{\tilde \eta}_n)\ra (\dot y_T,\dot{\tilde \eta}_T)  \text{ weakly in } L^1(0,T;\R^{d+1}),\ \text{as}\ n\ra\infty,
\]
and $(y_T,\tilde \eta_T)$ satisfies \eqref{dsAugmented}. Moreover,
\[
w_T(x)=e^{-\lambda T}\tilde \eta_T(T),
\]
which implies that $\eta_T$ is a minimizer for \eqref{ocAugmented} with corresponding trajectory $y_T$.
\end{proof}

\begin{lem}\label{LEMocT}
For any $x\in\R^d$, we have $v_T(x)=w_T(x)$ and there exists a minimizer $\bar u$ for problem \eqref{ocT}.
\end{lem}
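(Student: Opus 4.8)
The plan is to prove the two assertions in turn, both by translating between admissible controls for \eqref{ocT} and admissible trajectories for the differential inclusion \eqref{dsAugmented}. The key device, already implicit in the proof of Lemma \ref{ExistenceAugmented}, is that if $(y,u)$ solves the state equation and one sets
\[
\tilde\eta(s)=\int_0^s e^{\lambda(s-s')}\ell(y(s'),u(s'))\,ds',
\]
then $\tilde\eta(0)=0$ and $\dot{\tilde\eta}=\lambda\tilde\eta+\ell(y,u)$, so that $(y,\tilde\eta)$ is admissible for \eqref{dsAugmented} (taking $\xi=\ell(y,u)$ in the definition of $G$), and moreover a direct computation gives $e^{-\lambda T}\tilde\eta(T)=J_T(x,u)$. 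This last identity is the bridge linking the augmented cost to the original one.

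To show $w_T(x)\leq v_T(x)$, I would take an arbitrary $u\in L^{\infty}(0,T;U)$ with trajectory $y$, build the associated $\tilde\eta$ as above, and use admissibility to get $w_T(x)\leq e^{-\lambda T}\tilde\eta(T)=J_T(x,u)$; the infimum over $u$ then yields the inequality. For the converse $v_T(x)\leq w_T(x)$, I would start from any admissible $(y,\eta)$ for \eqref{dsAugmented} and apply the measurable selection theorem \cite[Corollary 1, pp. 91]{AC84}, exactly as in Lemma \ref{ExistenceAugmented}, to extract a measurable control $u$ with $\dot y=f(y,u)$ and $\dot\eta\geq\lambda\eta+\ell(y,u)$ a.e. Multiplying by $e^{-\lambda s}$ and integrating over $(0,T)$ (using $\eta(0)=0$) gives $e^{-\lambda T}\eta(T)\geq J_T(x,u)\geq v_T(x)$, and the infimum over $(y,\eta)$ completes the equality $v_T(x)=w_T(x)$.

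For the existence claim, I would invoke Lemma \ref{ExistenceAugmented} to obtain a minimizing pair $(y_T,\eta_T)$ for \eqref{ocAugmented}, so $e^{-\lambda T}\eta_T(T)=w_T(x)$, and apply the selection theorem once more to produce $\bar u\in L^{\infty}(0,T;U)$ with $\dot y_T=f(y_T,\bar u)$ and $\dot\eta_T\geq\lambda\eta_T+\ell(y_T,\bar u)$. The chain
\[
w_T(x)=e^{-\lambda T}\eta_T(T)\geq J_T(x,\bar u)\geq v_T(x)=w_T(x)
\]
then forces $J_T(x,\bar u)=v_T(x)$, so that $\bar u$ is the sought minimizer for \eqref{ocT}.

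The hard part will be the measurable selection step: one must verify that the set-valued map assigning to a.e.\ $s$ the set of controls $u\in U$ realizing $\dot y(s)=f(y(s),u)$ together with $\dot\eta(s)\geq\lambda\eta(s)+\ell(y(s),u)$ is measurable with nonempty values, so that a measurable selector exists. This is covered by the cited Aubin--Cellina result, and the convexity assumption {\bf (H3)} is what guarantees the inclusion \eqref{dsAugmented} is well posed with the closed convex images needed there; the remainder of the argument is routine bookkeeping around the identity $e^{-\lambda T}\tilde\eta(T)=J_T(x,u)$.
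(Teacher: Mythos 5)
Your proposal is correct and follows essentially the same route as the paper: the identity $e^{-\lambda T}\tilde\eta(T)=J_T(x,u)$ for the exact solution of $\dot{\tilde\eta}=\lambda\tilde\eta+\ell(y,u)$ gives $w_T\leq v_T$, while the measurable selection theorem applied to an admissible (in the paper, the optimal) pair $(y,\eta)$ gives $e^{-\lambda T}\eta(T)\geq J_T(x,\bar u)\geq v_T(x)$ and hence both the reverse inequality and the existence of a minimizer. The only cosmetic difference is that you run the selection argument for an arbitrary admissible pair before specializing to the optimal one, whereas the paper goes directly to the optimizer furnished by Lemma \ref{ExistenceAugmented}; also note that {\bf (H3)} is really needed for the convex-images compactness argument in Lemma \ref{ExistenceAugmented} rather than for the selection step itself.
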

\begin{proof}
For any $(y(\cdot),u(\cdot))$ satisfying \eqref{ds}, we define
\[
\eta(s)=\int^s_0 e^{-\lambda(s'-s)}\ell(y(s'),u(s'))ds',\ \forall\,s\in [0,T].
\]
Then
\[
\dot \eta(s)=\lambda \eta(s)+\ell(y(s),u(s)),\ \text{a.e.}\ s\in (0,T),
\]
which implies that $(y(\cdot),\eta(\cdot))$ satisfies \eqref{dsAugmented}. It follows that
\begin{eqnarray*}
w_T(x)\leq e^{-\lambda T}\eta(T)=\int^T_0 e^{-\lambda s}\ell(y(s),u(s))ds,
\end{eqnarray*}
and thus,
\[
w_T(x)\leq v_T(x).
\]
On the other hand, let $(\bar y,\bar \eta)$ be optimal for the problem \eqref{ocAugmented} by Lemma \ref{ExistenceAugmented}. Then there exists $\bar u\in L^{\infty}(0,T;U)$ such that
\begin{eqnarray}\label{baru}
w_T(x)= e^{-\lambda T}\bar \eta(T)=\int^T_0 \frac{d}{ds}\left[e^{-\lambda s}\bar \eta(s)\right]ds\geq\int^T_0e^{-\lambda s}\ell(\bar y(s),\bar u(s))ds,
\end{eqnarray}
where the last inequality holds due to the definition of $G$. Thus,
\[
w_T(x)\geq v_T(x).
\]
It follows that $w_T(x)=v_T(x)$. Together with \eqref{baru}, we obtain that $\bar u$ is a minimizer for \eqref{ocT}.
\end{proof}

\begin{thm}\label{THMexistence}
Assume {\bf (H1)-(H3)}. Then there exists a minimizer $\bar u\in L^{\infty}(0,\infty;U)$ for problem \eqref{ocinfini}.
\end{thm}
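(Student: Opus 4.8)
The plan is to pass to the infinite horizon limit in the family of finite horizon problems \eqref{ocT}, reusing the compactness already exhibited in the proof of Lemma \ref{ExistenceAugmented} together with a diagonal extraction over an increasing sequence of horizons. First I would fix the horizons $T_n=n$ and invoke Lemma \ref{LEMocT} to obtain, for each $n\in\N$, a minimizer $u_n\in L^{\infty}(0,n;U)$ of \eqref{ocT} with corresponding trajectory $y_n$, so that $\int_0^n e^{-\lambda s}\ell(y_n,u_n)\,ds=v_n(x)$. Since $\ell\geq 0$, truncating any admissible infinite horizon control to $[0,n]$ shows $v_n(x)\leq v(x)$, and $v(x)<+\infty$ by {\bf (H2)}; moreover the partial cost $\xi_n(s):=\int_0^s e^{-\lambda s'}\ell(y_n,u_n)\,ds'$ is nondecreasing and bounded by $v(x)$ on $[0,n]$. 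Exactly as in Lemma \ref{ExistenceAugmented}, setting $\tilde\eta_n(s)=e^{\lambda s}\xi_n(s)$ makes $(y_n,\tilde\eta_n)$ a solution of the differential inclusion \eqref{dsAugmented}.

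Next I would fix $T>0$ and work on $[0,T]$ for all $n\geq T$. The Gronwall estimate, now localized to $[0,T]$ and uniform in $n$ thanks to the compactness of $U$ and {\bf (H1)}, bounds $y_n$ uniformly, hence $\ell(y_n,u_n)$ and the velocities $\dot y_n,\dot\xi_n$ are uniformly bounded on $[0,T]$. The Arzel\`a--Ascoli and Dunford--Pettis theorems then give, as before, uniform and weak-$L^1$ convergence on $[0,T]$, and the closure theorem \cite[Theorem 1, pp. 60]{AC84} --- whose hypotheses hold because $G$ has convex images by {\bf (H3)} --- produces a limit $(\bar y,\bar\eta)$ solving \eqref{dsAugmented} on $[0,T]$. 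A diagonal argument over $T=1,2,\ldots$ then yields a single subsequence and a single limit $(\bar y,\bar\eta)$ defined on $[0,\infty)$ and solving \eqref{dsAugmented} on every compact interval, to which the selection theorem \cite[Corollary 1, pp. 91]{AC84} associates a control $\bar u\in L^{\infty}(0,\infty;U)$ with $\dot{\bar y}=f(\bar y,\bar u)$, $\bar y(0)=x$, and $\dot{\bar\eta}\geq\lambda\bar\eta+\ell(\bar y,\bar u)$.

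Finally I would establish optimality of $\bar u$. Writing $\bar\xi(s)=e^{-\lambda s}\bar\eta(s)$, the last differential inequality integrates to $\int_0^T e^{-\lambda s}\ell(\bar y,\bar u)\,ds\leq \bar\xi(T)$ for every $T$. Since $\bar\xi(T)=\lim_n\xi_n(T)\leq v(x)$ --- using the monotonicity of $\xi_n$ and $\xi_n(n)=v_n(x)\leq v(x)$ for $n\geq T$ --- letting $T\ra\infty$ gives $J(x,\bar u)\leq v(x)$. As $\bar u$ is admissible we also have $v(x)\leq J(x,\bar u)$, whence $J(x,\bar u)=v(x)$ and $\bar u$ is a minimizer for \eqref{ocinfini}. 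The step I expect to be the main obstacle is precisely this passage from finite to infinite horizon: one must guarantee that the diagonal extraction delivers a single globally defined admissible pair and that the cost does not escape to infinity in the limit, which is controlled here by the monotone, uniformly bounded partial costs $\xi_n$ together with the differential inequality for $\bar\eta$ furnished by the convexity assumption {\bf (H3)}.
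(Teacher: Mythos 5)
Your proposal follows the same architecture as the paper's proof: approximate by the finite horizon problems \eqref{ocT}, take the minimizers supplied by Lemma \ref{LEMocT}, lift to the augmented inclusion \eqref{dsAugmented}, and pass to the limit by a diagonal extraction over increasing horizons using the compactness and closure/selection machinery already set up in Lemma \ref{ExistenceAugmented}. The one genuine difference is the final optimality verification. The paper argues by contradiction: it supposes a strictly better control $\tilde u$ exists, establishes the tail estimate $\int_{T_k}^\infty e^{-\lambda s}\ell(\bar y,\bar u)\,ds<\e/2$, and combines uniform convergence of the partial costs with optimality of the finite-horizon controls to reach a contradiction. You instead prove optimality directly: since $\ell\geq 0$, truncation gives $v_T(x)\leq v(x)<\infty$ by {\bf (H2)}, the partial costs $\xi_n$ are nondecreasing with $\xi_n(T)\leq v_n(x)\leq v(x)$ for $n\geq T$, and the differential inequality $\dot{\bar\eta}\geq\lambda\bar\eta+\ell(\bar y,\bar u)$ inherited from the closure theorem (which needs {\bf (H3)}) yields $\int_0^T e^{-\lambda s}\ell(\bar y,\bar u)\,ds\leq\bar\xi(T)\leq v(x)$ for every $T$, whence $J(x,\bar u)\leq v(x)$ by monotone convergence. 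This is correct and, if anything, slightly cleaner than the paper's contradiction argument, since it avoids the $\e/2$ bookkeeping; both arguments ultimately rest on the same two ingredients, namely the uniform bound $v_T(x)\leq v(x)$ and the convergence of the augmented states.
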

For the sake of conciseness, we defer this technical proof to the Appendix.  
%%%%%%%%%%%%%%%%%%%%%%%%%%%%%%%%%%%%%%%%%%%%%%%%%%%%%%%%%%%%%%%%%%%%%%%%%%%%%%%%%%%%%%%%
%%%%%%%%%%%%%%%%%%%%%%%%%%%%%%%%%%%%%%%%%%%%%%%%%%%%%%%%%%%%%%%%%%%%%%%%%%%%%%%%%%%%%%%%

%%%%%%%%%%%%%%%%%%%%%%%%%%%%%%%%%%%%%%%%%%%%%%%%%%%%%%%%%%%%%%%%%%%%%%%%%%%%%%%%%%%%%%%%
%%%%%%%%%%%%%%%%%%%%%%%%%%%%%%%%%%%%%%%%%%%%%%%%%%%%%%%%%%%%%%%%%%%%%%%%%%%%%%%%%%%%%%%%
\section{The non-convex case: $0<p<1$}

In this section, we consider the particular case where $f$ is affine in $u$, i.e. there exists a Lipschitz continuous functions $f_k:\R^d\ra\R^d$ for $k=1,\ldots,m$ such that
 \begin{equation}\label{DSaffine}
 f(x,u)=f_0(x)+\sum^m_{k=1} f_k(x)u^k,\ \forall\,x\in\R^d,\ u=(u^1,\ldots,u^m)\in U.
 \end{equation}
Note that when $0<p<1$, the convexity assumption {\bf (H3)} is not satisfied by $f,\ell$. Therefore, the existence of an optimal control needs special attention.

It is well known that if there exists a minimizer for \eqref{ocinfini}, the dynamic programming approach will provide an optimal feedback control which is a measurable function in general. Since in numerical practice the HJB-based feedback is typically piecewise constant in time, the idea here is to consider a subspace of piecewise constant functions instead of considering the whole space of measurable functions on $(0,\infty)$. For this case existence can be derived.

For the time sequence:
\[
0=t_0<t_1<\ldots<t_i<t_{i+1}<\ldots<\infty,\ i\in\N,
\]
the set of piecewise constant controls $\U$ is defined by
\[
\U=\{u=(u^1,\ldots,u^m):[0,\infty)\ra U\,:\,u^k(s)=u^k_i\ \text{for}\ s\in[t_i,t_{i+1}),\ u^k_i\in U,\ i\in\N,\ k=1,\ldots,m\}.
\]
For any $u\in\U$,
\[
\int^{\infty}_0 e^{-\lambda s}\|u(s)\|^p_p ds=
\sum^m_{k=1}\sum^{\infty}_{i=0}\int^{t_{i+1}}_{t_i}e^{-\lambda s}|u^k_i|^p ds
=\sum^m_{k=1}\sum^{\infty}_{i=0}c_i|u^k_i|^p,
\]
where
\[
c_i=\frac{1}{\lambda} (e^{-\lambda t_i}-e^{-\lambda t_{i+1}}).
\]
Then, the optimization problem \eqref{ocinfini} over $\U$ can be expressed as for any $x\in\R^d$
\begin{equation}\label{ocdiscret}
\inf_{u\in\U}\int^{\infty}_0e^{-\lambda s}\ell_1(y(s))ds+ \gamma\sum^m_{k=1}\sum^{\infty}_{i=0}c_i|u^k_i|^p,
\end{equation}
subject to
\begin{equation}\label{dsDelta}
\left\{
\begin{array}{ll}
\dot y(s)=f_0(y(s))+\sum^m_{k=1}f_k(y(s))u^k_i & \text{for}\ s\in (t_i,t_{i+1}),\ i=0,1,\ldots\\
y(0)=x.
\end{array}
\right.
\end{equation}
We denote by $v^{\Delta}(x)$, $x\in\R^d$ the associated value function.
For any $p>0$ we define the space $\ell^p=\left\{(u_i)_{i\in\N}\,:\,\sum^{\infty}_{i=1}|u_i|^p<\infty\right\}$ endowed with
\[
|u|_{\ell^p}=\left(\sum^{\infty}_{i=1}|u_i|^p\right)^{1/p},\ \text{for}\ u\in\ell^p.
\]
It is a norm if $p\geq1$ and a quasi-norm if $0<p<1$. We recall the following result.
\begin{lem}\label{LEMembedding}
For $1\leq r<s\leq\infty$, $\ell^r\subseteq\ell^s$. 
\end{lem}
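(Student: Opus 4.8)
The plan is to prove the stronger quantitative statement that $|u|_{\ell^s}\leq |u|_{\ell^r}$ for every $u\in\ell^r$, from which the inclusion $\ell^r\subseteq\ell^s$ follows immediately. I would split the argument according to whether $s$ is finite or $s=\infty$.

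First consider the case $s=\infty$. Given $u=(u_i)_{i\in\N}\in\ell^r$, each individual term satisfies $|u_i|^r\leq \sum_{j}|u_j|^r$, so that $|u_i|\leq |u|_{\ell^r}$ for all $i$. Taking the supremum over $i$ gives $|u|_{\ell^\infty}=\sup_i|u_i|\leq |u|_{\ell^r}<\infty$, hence $u\in\ell^\infty$.

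For the case $1\leq r<s<\infty$, I would first reduce to normalized sequences by homogeneity: if $u=0$ the claim is trivial, and otherwise, replacing $u$ by $u/|u|_{\ell^r}$, one may assume $|u|_{\ell^r}=1$. Under this normalization every coordinate satisfies $|u_i|\leq 1$, and since $s>r$ the elementary inequality $t^s\leq t^r$ holds for all $t\in[0,1]$; applying it with $t=|u_i|$ yields $|u_i|^s\leq |u_i|^r$ for each $i$. Summing over $i\in\N$ gives $\sum_i|u_i|^s\leq \sum_i|u_i|^r=1$, whence $|u|_{\ell^s}\leq 1=|u|_{\ell^r}$. Undoing the normalization recovers $|u|_{\ell^s}\leq|u|_{\ell^r}$ in general, so in particular $u\in\ell^s$.

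There is no serious obstacle here, as this is the classical nesting of sequence spaces; the only points requiring care are the reduction by homogeneity (which guarantees $|u_i|\leq 1$, and this is precisely what makes the exponent comparison $t^s\leq t^r$ go in the right direction) and the separate treatment of $s=\infty$, where the $p$-norm is replaced by the supremum. One could alternatively avoid the normalization by noting that $\sum_i|u_i|^r<\infty$ forces $|u_i|\to 0$, so $|u_i|\leq 1$ for all but finitely many $i$, and the tail comparison $|u_i|^s\leq|u_i|^r$ again yields convergence of $\sum_i|u_i|^s$; this establishes the inclusion, though not the sharp norm inequality.
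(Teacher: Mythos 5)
Your proof is correct and rests on the same elementary mechanism as the paper's, namely the inequality $t^s\leq t^r$ for $t\in[0,1]$ applied after a normalization that forces every coordinate into $[0,1]$; your second, non-normalized variant at the end is in fact precisely the paper's argument (they pick $i_0$ with $|u_i|\leq 1$ for $i>i_0$, set $M:=\max\{1,\max_{i\leq i_0}|u_i|\}$, and conclude $\sum_i|u_i|^s\leq M^{s-r}\sum_i|u_i|^r$). The only difference is your choice of normalizing constant $|u|_{\ell^r}$ instead of $M$, which yields the sharper and cleaner conclusion $|u|_{\ell^s}\leq|u|_{\ell^r}$, and your explicit treatment of $s=\infty$, which the paper dismisses as trivial.
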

\begin{proof}
The case when $s=\infty$ is trivial. Consider $s\neq \infty$.
For any $(u_i)_{i\in\N}\in\ell^r$, we have that $|u_i|\ra 0$ as $i\ra \infty$. Then, there exists $i_0\in\N$ such that
\[
\forall\, i> i_0,\ |u_i|\leq 1.
\]
We set
\[
M:=\max\left\{1,\max_{i=1,\ldots,i_0}\{ |u_i|\}\right\}.
\]
Thus, for $s\in (r,\infty)$
\begin{eqnarray*}
\sum_{i=1}^{\infty}|u_i|^s = M^s\sum_{i=1}^{\infty} \left|\frac{u_i}{M}\right|^s\leq M^s\sum_{i=1}^{\infty} \left|\frac{u_i}{M}\right|^r =M^{s-r}\sum_{i=1}^{\infty} |u_i|^r,
\end{eqnarray*}
which concludes the proof.
%for $s\neq \infty$.
% 
% Consider now the case $s=\infty$. Suppose that $(u_i)_{i\in\N}\in\ell^r$ is not a sequence of zeros (otherwise the proof is trivial). We take any $u_{j}\neq 0$ and let $M\geq 1$ be defined as above. Then
% \[
% \max_{i\in\N}\{|u_i|\}\leq M=\frac{M}{|u_j|}|u_j|\leq \frac{M^r}{|u_j|^r}|u_j|^r\leq \frac{M^r}{|u_j|^r}\sum^{\infty}_{i=1}|u_i|^r.
% \]
\end{proof}
Due to the $\ell^p$-penalty in the distributed cost, problem \eqref{ocdiscret} turns into
\begin{equation}\label{ocellcp}
v^{\Delta}(x)=\inf_{u=(u^1,\ldots,u^m)\in \U,(c_i^{1/p}u^k_i)\in\ell^p,k=1,\ldots,m}\int^{\infty}_0e^{-\lambda s}\ell_1(y(s))ds+ \gamma\sum^m_{k=1}\sum^{\infty}_{i=0}\left|c_i^{1/p}u^k_i\right|^p.
\end{equation}

\smallskip

To establish an existence result for problem \eqref{ocellcp}, we follow the idea in \cite{IK14} by introducing the reparametrization $\psi:\ell^2\ra\ell^p$ with
\[
\psi(w)_i=|w_i|^{\frac{2}{p}}\mathop{sgn}(w_i),\ \text{for}\ w\in\ell^2,\ i\in\N.
\]
Using the fact that $\psi$ is an isomorphism, \eqref{ocellcp} is equivalent to
\begin{equation}\label{ocellc2}
v^{\Delta}(x)=\inf_{w=(w^1,\ldots,w^m),\psi(w)\in \U,w^k\in\ell^2,k=1,\ldots,m}\int^{\infty}_0e^{-\lambda s}\ell_1(y(s))ds+ \gamma\sum^m_{k=1}\sum^{\infty}_{i=0}|w^k_i|^2,
\end{equation}
where $(y,w)$ satisfies
\begin{equation}\label{dsDeltaw}
\left\{
\begin{array}{ll}
\dot y(s)=f_0(y(s))+\sum^m_{k=1}f_k(y(s))c_i^{-1/p}\psi(w^k)_i & \text{for}\ s\in (t_i,t_{i+1}),\ i=0,1,\ldots\\
y(0)=x.
\end{array}
\right.
\end{equation}
Let us recall \cite[Lemma 2.1]{IK14} as follows.
\begin{lem}\label{lemell2c}
The mapping $\psi:\ell^2\ra\ell^2$ is weakly (sequentially) continuous, i.e. $w^n\ra \bar w$ weakly in $\ell^2$ implies that $\psi(w^n)\ra\psi(\bar w)$ weakly in $\ell^2$.
\end{lem}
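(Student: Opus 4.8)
The plan is to exploit the Hilbert-space structure of $\ell^2$ together with the elementary fact that, for a \emph{uniformly bounded} sequence, weak convergence is equivalent to coordinatewise convergence. Concretely, suppose $w^n\ra\bar w$ weakly in $\ell^2$. By the Banach--Steinhaus theorem the sequence is bounded, say $\|w^n\|_{\ell^2}\le R$ for all $n$, and by weak lower semicontinuity of the norm also $\|\bar w\|_{\ell^2}\le R$. I would then reduce the claim to verifying two properties of $\psi(w^n)$: (i) uniform boundedness in $\ell^2$, and (ii) coordinatewise convergence to $\psi(\bar w)$.

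For (ii): testing weak convergence against the standard basis vectors $e_i$ gives $w^n_i\ra\bar w_i$ for each fixed $i$. The scalar map $g(t)=|t|^{2/p}\mathop{sgn}(t)$ is continuous on $\R$ (recall $2/p>2$, so there is no singularity at the origin), whence $\psi(w^n)_i=g(w^n_i)\ra g(\bar w_i)=\psi(\bar w)_i$ for each $i$. For (i): since $\|w^n\|_{\ell^2}\le R$ we have $|w^n_i|\le R$ for every $i$, and therefore $|w^n_i|^{4/p}=|w^n_i|^{2}\,|w^n_i|^{4/p-2}\le R^{4/p-2}|w^n_i|^2$ (here $4/p-2>0$). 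Summing over $i$ yields $\|\psi(w^n)\|_{\ell^2}^2=\sum_i|w^n_i|^{4/p}\le R^{4/p-2}\|w^n\|_{\ell^2}^2\le R^{4/p}$, i.e. $\{\psi(w^n)\}$ is bounded in $\ell^2$ by $R^{2/p}$. This is the quantitative form, restricted to bounded sequences, of the embedding $\ell^2\subseteq\ell^{4/p}$ from Lemma \ref{LEMembedding}, and it is the step that prevents ``mass'' from escaping to infinity.

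Finally, I would combine (i) and (ii). Given any test element $y\in\ell^2$ and $\e>0$, split $\langle\psi(w^n)-\psi(\bar w),y\rangle$ into a finite block $i\le N$ and a tail $i>N$. The tail is controlled uniformly in $n$ by Cauchy--Schwarz, using the uniform bound from (i) together with the smallness of $\sum_{i>N}|y_i|^2$; the finite block tends to $0$ by (ii). Letting first $n\ra\infty$ and then $N\ra\infty$ gives $\langle\psi(w^n),y\rangle\ra\langle\psi(\bar w),y\rangle$, that is, $\psi(w^n)\ra\psi(\bar w)$ weakly in $\ell^2$.

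The main obstacle is mild: there is no deep difficulty, but the essential point---and the only place where $0<p<1$ (equivalently $2/p>2$) is actually used---is the uniform $\ell^2$-bound in (i); without it, coordinatewise convergence alone would not force weak convergence. An equivalent and perhaps cleaner route is to invoke reflexivity of $\ell^2$: by (i) every subsequence of $\psi(w^n)$ has a further weakly convergent subsequence, whose limit must agree coordinatewise with $\psi(\bar w)$ by (ii), so the whole sequence converges weakly to $\psi(\bar w)$.
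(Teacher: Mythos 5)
Your argument is correct and complete. Note that the paper itself does not prove this lemma: it simply recalls it as \cite[Lemma 2.1]{IK14}, so there is no internal proof to compare against. Your proof is the standard (and, in substance, the same as the cited reference's) route: weak convergence plus Banach--Steinhaus gives a uniform bound $\|w^n\|_{\ell^2}\le R$, hence $|w^n_i|\le R$ for all $i$, and the elementary estimate $|w^n_i|^{4/p}\le R^{4/p-2}|w^n_i|^2$ (valid precisely because $4/p-2>0$) yields the uniform bound $\|\psi(w^n)\|_{\ell^2}\le R^{2/p}$; coordinatewise convergence of $\psi(w^n)$ follows from continuity of $t\mapsto|t|^{2/p}\mathop{sgn}(t)$; and boundedness together with coordinatewise convergence upgrades to weak convergence in $\ell^2$ by the finite-block/tail splitting (or, as you note, by reflexivity and a subsequence argument). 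You correctly identify the uniform $\ell^2$-bound as the one step where the restriction $2/p>2$ is genuinely used, and your quantitative version of the embedding $\ell^2\subseteq\ell^{4/p}$ is consistent with Lemma \ref{LEMembedding} of the paper. No gaps.
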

\begin{thm}\label{thmexistence1}
If {\bf (H1)-(H2)} hold, there exists a minimizer $\bar w\in\left(\ell^2\right)^m$ to \eqref{ocellc2}, and hence a solution $\bar u\in\U$ to \eqref{ocellcp}.
\end{thm}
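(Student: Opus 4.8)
The plan is to run the direct method of the calculus of variations in the Hilbert space $(\ell^2)^m$, taking advantage of the fact that after the reparametrization the control penalty has become the square of the $\ell^2$-norm. First I would fix $x$ and choose a minimizing sequence $(w^n)_{n\in\N}$, $w^n=(w^{1,n},\dots,w^{m,n})$, with $\psi(w^n)\in\U$ and $F(w^n)\to v^\Delta(x)$, where $F(w)=\int_0^\infty e^{-\lambda s}\ell_1(y_{w}(s))\,ds+\gamma\sum_{k=1}^m\sum_{i=0}^\infty|w_i^k|^2$ and $y_w$ is the trajectory solving \eqref{dsDeltaw}. (For this to be meaningful one needs $v^\Delta(x)<\infty$, which I would record first as a preliminary, extracting from {\bf (H2)} in its controllability form an admissible control of finite cost.) Since $\ell_1\geq0$, the penalty is dominated by $F(w^n)$, so $\gamma\|w^n\|_{(\ell^2)^m}^2\leq F(w^n)\leq C$, giving a uniform bound; by weak compactness of bounded sets in the Hilbert space $(\ell^2)^m$ I extract a subsequence with $w^n\rightharpoonup\bar w$ weakly.

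The next step is to pass to the limit in the constraint and in the state equation, and here Lemma~\ref{lemell2c} is decisive: weak convergence $w^{k,n}\rightharpoonup\bar w^k$ yields $\psi(w^{k,n})\rightharpoonup\psi(\bar w^k)$ weakly in $\ell^2$, and testing against the coordinate vectors shows $\psi(w^{k,n})_i\to\psi(\bar w^k)_i$ for each fixed $i$. Hence the piecewise constant controls converge coordinatewise, $u^{k,n}_i=c_i^{-1/p}\psi(w^{k,n})_i\to c_i^{-1/p}\psi(\bar w^k)_i=:\bar u_i^k$. Since each $u_i^{k,n}\in U$ and $U$ is compact, the limit satisfies $\bar u_i\in U$, so $\bar u\in\U$ and $\psi(\bar w)\in\U$; thus $\bar w$ is admissible. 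On any finite interval $[0,t_N]$ the dynamics \eqref{dsDeltaw} involve only the finitely many constants $u_0,\dots,u_{N-1}$, which converge, and the right-hand side is Lipschitz in $y$ by {\bf (H1)} (applied to $f_0,\dots,f_m$); a Gronwall estimate bounds the trajectories uniformly on $[0,t_N]$, and continuous dependence of the ODE solution on these constant control values and on the initial datum, applied interval by interval, gives $y_{w^n}\to y_{\bar w}=:\bar y$ uniformly on each $[0,t_N]$.

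It remains to prove lower semicontinuity of $F$ along the sequence. The penalty $\gamma\|w\|_{(\ell^2)^m}^2$ is weakly lower semicontinuous, so $\gamma\|\bar w\|^2\leq\liminf_n\gamma\|w^n\|^2$. For the distributed cost, uniform convergence $y_{w^n}\to\bar y$ on $[0,t_N]$ together with continuity of $\ell_1$ gives
\[
\int_0^{t_N}e^{-\lambda s}\ell_1(y_{w^n}(s))\,ds\to\int_0^{t_N}e^{-\lambda s}\ell_1(\bar y(s))\,ds.
\]
Since $\ell_1\geq0$, the full integral dominates its truncation, so $\liminf_n\int_0^\infty e^{-\lambda s}\ell_1(y_{w^n})\,ds\geq\int_0^{t_N}e^{-\lambda s}\ell_1(\bar y)\,ds$ for every $N$; letting $N\to\infty$ via monotone convergence yields lower semicontinuity of the distributed term. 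Adding the two lower bounds (superadditivity of $\liminf$) gives $F(\bar w)\leq\liminf_n F(w^n)=v^\Delta(x)$, while admissibility of $\bar w$ forces $F(\bar w)\geq v^\Delta(x)$. Hence $\bar w$ is a minimizer, and setting $\bar u_i^k=c_i^{-1/p}\psi(\bar w^k)_i$ produces the sought solution $\bar u\in\U$ of \eqref{ocellcp}.

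I expect the two genuinely delicate points to be the following. First, the weak continuity of the nonlinear map $\psi$: this is exactly where Lemma~\ref{lemell2c} is indispensable, since without it the weak convergence of $w^n$ would not transfer into convergence of the controls that drive the state equation. Second, the infinite-horizon tail of the cost, which I would handle by nonnegativity of $\ell_1$ — truncation becomes a lower bound and monotone convergence lets me pass to the limit, so no uniform integrability of the tail is needed. This is precisely the reason only lower semicontinuity, rather than continuity, of the distributed cost is available, which is all the direct method requires.
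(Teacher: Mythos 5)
Your proposal is correct and follows essentially the same route as the paper: a minimizing sequence in $(\ell^2)^m$, weak compactness, Lemma~\ref{lemell2c} to transfer weak convergence of $w^{k,n}$ into coordinatewise convergence of the controls $u^{k,n}_i=c_i^{-1/p}\psi(w^{k,n})_i$, uniform convergence of the trajectories interval by interval via the affine structure of $f$, and weak lower semicontinuity of the $\ell^2$-penalty combined with convergence of the truncated state cost. If anything, you are slightly more explicit than the paper on two points it glosses over — the admissibility of the limit control (via compactness of $U$) and the infinite-horizon tail (via nonnegativity of $\ell_1$ and monotone convergence) — but these are refinements of, not departures from, the same argument.
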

\begin{proof}
Let $(w^{1,n},\ldots,w^{m,n})$ be a minimizing sequence of \eqref{ocellc2}. For $k=1,\ldots,m$, we set $u^{k,n}=(u^{k,n}_i)_{i\in\N}$ such that
\[
u_i^{k,n}=c_i^{-1/p}\psi(w^{k,n})_i,\ \forall\,i\in\N.
\]
Let $y^n$ be the solution of \eqref{ds} associated to $u^n=(u^{1,n},\ldots,u^{m,n})$. Note that for $k=1,\ldots,m$,
\[
\sum^{\infty}_{i=0}c_i^{2/p}|u^{k,n}_i|^2=\sum^{\infty}_{i=0}|w^{k,n}_i|^{4/p},
\]
and $2<\frac{4}{p}$. Since $\ell^2\subseteq\ell^{4/p}$, we deduce that 
\[
w^{k,n}\in \ell^{4/p}\ \text{and}\ (c_i^{1/p}u^{k,n}_i)_{i\in\N}\in\ell^2.
\] 
It follows that $\{((c_i^{1/p}u^{k,n}_i)_{i\in\N},w^{k,n})\}_{n\in\N}$ is a bounded sequence in $\ell^2\times\ell^2$. Hence there exists a subsequence such that $((c_i^{1/p}u^{k,n}_i)_{i\in\N},w^{k,n})$ converges weakly to some $((c_i^{1/p}\bar u^k_i)_{i\in\N},\bar w^k)\in\ell^2\times\ell^2$. From Lemma \ref{lemell2c} we have that 
\begin{equation}\label{Eqexist1}
\bar u^k_i=c_i^{-1/p}\psi(\bar w^k)_i,\ \forall\,i\in\N.
\end{equation}
Weak convergence of $w^{k,n}$ to $\bar w^k$ in $\ell^2$ implies that 
\[
w^{k,n}_i\ra \bar w^k_i\ \text{for each}\ i\in\N.
\]
Let $y_n$ be the solution to \eqref{dsDeltaw} with the control $(w^{1,n},\ldots,w^{m,n})$. Then on each interval $[t_i,t_{i+1}]$, $i=0,1,\ldots$, it is deduced by the same arguments as in Lemma \ref{LEMocT} that there exists $\bar y_i:[t_i,t_{i+1}]\ra \R^d$ such that 
\[
y_n \ra \bar y_i\ \text{uniformly in}\ [t_i,t_{i+1}],\ \text{as}\ n\ra\infty.
\]
For $\bar y:[0,\infty)\ra\R^d$ defined by
\[
\bar y(s)=\bar y_i(s),\ \text{for}\ s\in [t_i,t_{i+1}],\ i=0,1,\ldots,
\]
it follows that
\[
y_n \ra \bar y\ \text{uniformly in}\ [0,\infty),\ \text{as}\ n\ra\infty,
\]
and hence $\bar y$ is the solution to \eqref{dsDeltaw} corresponding to $\bar w:=(\bar w^1,\ldots,\bar w^m)$. Here we use that $f$ is affine in $\psi(w^k)$, $k=1,\ldots,m$. By convexity of $\ell_1$ and the lower semi-continuity of the $\ell^2$ norm, we deduce that $\bar w$ is a minimizer for the problem \eqref{ocellc2}. Hence $\bar u:=(\bar u^1,\ldots,\bar u^m)$ satisfying \eqref{Eqexist1} is a minimizer for the problem \eqref{ocellcp}.
\end{proof}
%%%%%%%%%%%%%%%%%%%%%%%%%%%%%%%%%%%%%%%%%%%%%%%%%%%%%%%%%%%%%%%%%%%%%%%%%%%%%%%%%%%%%%%%
%%%%%%%%%%%%%%%%%%%%%%%%%%%%%%%%%%%%%%%%%%%%%%%%%%%%%%%%%%%%%%%%%%%%%%%%%%%%%%%%%%%%%%%%

%%%%%%%%%%%%%%%%%%%%%%%%%%%%%%%%%%%%%%%%%%%%%%%%%%%%%%%%%%%%%%%%%%%%%%%%%%%%%%%%%%%%%%%%
%%%%%%%%%%%%%%%%%%%%%%%%%%%%%%%%%%%%%%%%%%%%%%%%%%%%%%%%%%%%%%%%%%%%%%%%%%%%%%%%%%%%%%%%
\section{Sparsity properties}
In this section, the control set $U$ is given by $\ell^q$-type constraints of the form
\begin{equation}\label{DefU}
U=\left\{u=(u_1,\ldots,u_m)\in\R^m\,:\,\sum^m_{i=1}|u_i|^q\leq \rho^q\right\},
\end{equation}
where $q\geq 1$ and $\rho>0$ are fixed. We focus on control-affine dynamics as in \eqref{DSaffine} which are recalled here
\[
f(x,u)=f_0(x)+\sum^m_{i=1} f_i(x)u_i,\ \forall\,x\in\R^d,\ u=(u_1,\ldots,u_m)\in U.
\]
Let us also recall the running cost: given $p\in (0,1]$,
\[
\ell(x,u)=\ell_1(x)+\gamma\|u\|^p_p,\ \forall\,x\in\R^d,\ u=(u_1,\ldots,u_m)\in U.
\]
In this framework, we investigate the sparsity properties for the optimal controls which can be derived from the optimality condition.  In general,
the first order necessary optimality conditions for the infinite horizon problem \eqref{ocinfini} are the following (\cite[Remark III.2.55]{BC97}). 
\begin{lem}\label{Optimality}
Assume that {\bf (H1)-(H2)} hold and suppose in addition that $f,\ell$ are $C^1$ with respect to the first variable. Given $x\in\R^d$, let $\bar u\in L^{\infty}(0,\infty;U)$ be a locally optimal control for problem \eqref{ocinfini} with the initial point $x$ and corresponding optimal trajectory $\bar y$. Then there exists an adjoint state $\vp:[0,\infty)\ra\R^d$ satisfying:
\begin{enumerate}[(i)]
 \item 
 For almost all $s\in(0,\infty)$, the following equality holds in the Carath\'eodory sense
 \begin{equation}\label{dsadjoint}
 \dot \vp(s)=-\left[\frac{\partial}{\partial y}f(\bar y(s),\bar u(s))\right]^T \vp(s)-e^{-\lambda s}\frac{\partial}{\partial y}\ell(\bar y(s),\bar u(s)).
 \end{equation}
 \item 
 \[
 \lim_{T\ra\infty}\vp(T)=0.
 \]
 \item
 For almost all $s\in(0,\infty)$ and all $u\in U$, 
 \[
 -f(\bar y(s),\bar u(s))\cdot \vp(s)-e^{-\lambda s}\ell(\bar y(s),\bar u(s))\geq -f(\bar y(s),u)\cdot \vp(s)-e^{-\lambda s}\ell(\bar y(s),u).
 \]
\end{enumerate}
\end{lem}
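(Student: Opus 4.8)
The plan is to establish the three conditions by a \emph{needle} (spike) variation argument, which is well suited here because (iii) is a pointwise minimum principle comparing $\bar u(s)$ with an arbitrary competitor $u\in U$, rather than a first-order stationarity condition in the control; consequently only the $C^1$-regularity of $f$ and $\ell$ in $y$ is needed, and the nonsmoothness and nonconvexity of $\ell$ in $u$ (the term $\gamma\|u\|_p^p$) never enter, since we never differentiate in $u$. First I would fix a Lebesgue point $\tau\in(0,\infty)$ of $s\mapsto(\bar y(s),\bar u(s))$, a value $u\in U$, and $\e>0$, and introduce the perturbed control $u_\e$ equal to $u$ on $[\tau-\e,\tau)$ and to $\bar u$ elsewhere; let $y_\e$ be the associated trajectory. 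Using {\bf (H1)} and the $C^1$-dependence on $y$, a standard expansion gives $y_\e(s)=\bar y(s)+\e\,w(s)+o(\e)$ for $s\ge\tau$, where the variation $w$ solves the linearized equation
\[
\dot w(s)=\frac{\partial}{\partial y}f(\bar y(s),\bar u(s))\,w(s),\qquad s>\tau,
\]
with the jump initial datum $w(\tau)=f(\bar y(\tau),u)-f(\bar y(\tau),\bar u(\tau))$.

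Next I would compute the first variation of the cost. Since $\bar u$ is locally optimal, $J(x,u_\e)\ge J(x,\bar u)$ for all small $\e$, and differentiating the cost at $\e=0^+$ (the spike interval $[\tau-\e,\tau)$ contributing the pointwise term, and the propagated perturbation $w$ the integral term) yields
\[
0\le e^{-\lambda\tau}\bigl(\ell(\bar y(\tau),u)-\ell(\bar y(\tau),\bar u(\tau))\bigr)+\int_\tau^\infty e^{-\lambda s}\frac{\partial}{\partial y}\ell(\bar y(s),\bar u(s))\cdot w(s)\,ds.
\]
The role of the adjoint state $\vp$ is to rewrite the integral term. Defining $\vp$ as the solution of the adjoint equation \eqref{dsadjoint} that decays at infinity, i.e.
\[
\vp(s)=\int_s^\infty R(r,s)^{T}e^{-\lambda r}\frac{\partial}{\partial y}\ell(\bar y(r),\bar u(r))\,dr,
\]
where $R(r,s)$ is the state-transition matrix of the variational system, a direct computation shows $\frac{d}{ds}\bigl(\vp(s)\cdot w(s)\bigr)=-e^{-\lambda s}\frac{\partial}{\partial y}\ell(\bar y(s),\bar u(s))\cdot w(s)$, because the $\frac{\partial f}{\partial y}$-terms cancel. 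Integrating from $\tau$ to $\infty$ and using the transversality property (ii) together with $w(\tau)=f(\bar y(\tau),u)-f(\bar y(\tau),\bar u(\tau))$ turns the displayed inequality into
\[
-\vp(\tau)\cdot f(\bar y(\tau),\bar u(\tau))-e^{-\lambda\tau}\ell(\bar y(\tau),\bar u(\tau))\ge-\vp(\tau)\cdot f(\bar y(\tau),u)-e^{-\lambda\tau}\ell(\bar y(\tau),u),
\]
which is precisely (iii) at $s=\tau$; since almost every $s$ is such a Lebesgue point, (iii) follows for a.e.\ $s$. Properties (i) and (ii) are then read off from the definition of $\vp$: differentiating the integral gives the Carath\'eodory form of \eqref{dsadjoint}, and $\vp(s)\ra0$ as $s\ra\infty$ from the convergence of the defining integral.

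The main obstacle is the construction of $\vp$ on the unbounded horizon: I must show that the improper integral defining $\vp$ converges and that the boundary term $\vp(s)\cdot w(s)$ vanishes as $s\ra\infty$, so that the telescoping identity closes. This is delicate because {\bf (H1)} only bounds the resolvent by $\|R(r,s)\|\le e^{L(r-s)}$, whereas the discount factor $e^{-\lambda r}$ together with the growth of $\frac{\partial}{\partial y}\ell(\bar y,\bar u)$ along the optimal trajectory must dominate this growth; here {\bf (H2)}, which guarantees finiteness of the optimal cost and hence integrability along $\bar y$, is essential. An alternative and technically safer route, which I would fall back on, is to truncate the problem to $[0,T]$, apply the classical finite-horizon Pontryagin maximum principle to obtain adjoints $\vp_T$ with terminal condition $\vp_T(T)=0$, and pass to the limit $T\ra\infty$ using uniform a priori bounds; conditions (i) and (iii) then pass to the limit on compact subintervals, while (ii) emerges from the uniform decay estimates. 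This is in essence the argument behind the cited \cite[Remark III.2.55]{BC97}.
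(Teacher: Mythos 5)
The paper does not actually prove this lemma: it is stated as a known set of necessary conditions with a pointer to \cite[Remark III.2.55]{BC97}, so there is no internal argument to compare yours against step by step. Your spike-variation outline is the standard route to such a statement, and parts (i) and (iii) would indeed follow from it once the adjoint is constructed; your observation that the needle variation never differentiates in $u$, so that the nonsmoothness and nonconvexity of $\gamma\|u\|_p^p$ are harmless, is correct and is the right reason why the lemma can hold for $0<p\leq 1$.

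There is, however, a genuine gap, and it sits exactly where you locate ``the main obstacle'': the transversality condition (ii) and, with it, the convergence of your defining integral for $\vp$. Under {\bf (H1)}--{\bf (H2)} alone the state-transition matrix only obeys $\|R(r,s)\|\leq e^{L(r-s)}$, and nothing in the hypotheses forces $\lambda>L$ or controls the growth of $\frac{\partial}{\partial y}\ell(\bar y(r),\bar u(r))$ along the optimal trajectory; {\bf (H2)} gives finiteness of $\int_0^\infty e^{-\lambda s}\ell\,ds$ for one admissible pair, which does not imply integrability of $e^{-\lambda r}R(r,s)^{T}\frac{\partial}{\partial y}\ell(\bar y(r),\bar u(r))$. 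Halkin's classical counterexample shows that for infinite-horizon problems conditions (i) and (iii) can hold while \emph{no} adjoint satisfies $\lim_{T\ra\infty}\vp(T)=0$, so (ii) cannot be derived without an additional dominating-discount or asymptotic hypothesis. Your fallback does not close this gap either: if $\bar u$ is optimal on $[0,\infty)$, its restriction to $[0,T]$ is optimal for the truncated problem with the \emph{fixed endpoint} $y(T)=\bar y(T)$ (any cheaper competitor could be concatenated with $\bar u|_{[T,\infty)}$), not for the free-endpoint problem with zero terminal cost; the finite-horizon maximum principle therefore yields adjoints $\vp_T$ with free, not vanishing, terminal value, and their limit need not tend to zero at infinity. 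In short, your argument delivers (i) and (iii) modulo justifying the differentiation of the cost through the infinite integral, but (ii) genuinely requires hypotheses beyond those stated (for instance $\lambda$ large relative to $L$ together with a growth bound on $\frac{\partial}{\partial y}\ell_1$ along $\bar y$), which is presumably why the paper defers to the cited remark rather than giving a proof.
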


The optimality condition shows that for almost all $s>0$, a local optimal control $\bar u$ maximizes
\begin{eqnarray*}
&&-f(\bar y(s),u)\cdot \vp(s)-e^{-\lambda s} \ell(\bar y(s),u) \\
&=&\sum^m_{i=1}\left(-f_i(\bar y(s))\cdot \vp(s) u_i-\gamma e^{-\lambda s}|u_i|^p\right)-f_0(\bar y(s))\cdot \vp(s)-e^{-\lambda s}\ell_1(\bar y(s))\\
&=& \gamma e^{-\lambda s}\sum^m_{i=1}\left(c_i(s) u_i -|u_i|^p\right)-f_0(\bar y(s))\cdot \vp(s)-e^{-\lambda s}\ell_1(\bar y(s)),
\end{eqnarray*}
where
\begin{equation}\label{CiDef}
c_i(s)=-\frac{f_i(\bar y(s))\cdot \vp(s)}{\gamma}e^{\lambda s}
\end{equation}
will be of importance throughout this section. The cases $0<p<1$ and $p=1$ will be treated separately. 

Denote by $(e_i)_{i=1,\ldots,m}$ the Euclidean basis of $\R^m$. The first sparsity result that can be obtained for $0<p<1$ is the following.
\begin{prop}
Given $0<p<1$, $q\geq 1$ and $\rho>0$,
let $\bar u$ be a locally optimal control, $\bar y$ be the corresponding optimal trajectory and $\vp$ be the adjoint state. Then the following holds in the almost everywhere sense: if
\begin{equation}\label{conditionpq}
\frac{c_i(s)(q-1)}{p(q-p)}\rho^{1-p}<1,\ \forall\,i=1,\ldots,m,
\end{equation}
then
\[
\left\{
\begin{array}{ll}
\bar u(s)=0 & \text{if}\ \rho^{1-p}\max_{i=1,\ldots,m}|c_i(s)|<1,\\
\bar u(s)=\rho e_i\mathop{sgn} c_i(s) & \text{if}\ \rho^{1-p}|c_i(s)|>\max_{j\in\{1,\ldots,m,j\neq i\}}\{\rho^{1-p}|c_j(s)|,1\}.
\end{array}
\right.
\]
\end{prop}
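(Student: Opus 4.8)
The plan is to reduce the pointwise maximization that characterises $\bar u(s)$ to a finite-dimensional convex-analytic argument and then to read \eqref{conditionpq} as a convexity threshold. Fix a time $s$ at which the optimality relation Lemma~\ref{Optimality}(iii) holds and abbreviate $c_i=c_i(s)$. As recorded in the display preceding the statement, since $\gamma e^{-\lambda s}>0$ and the remaining terms are independent of $u$, the vector $\bar u(s)$ maximizes $F(u):=\sum_{i=1}^m\bigl(c_iu_i-|u_i|^p\bigr)$ over $U=\{u:\sum_i|u_i|^q\le\rho^q\}$. I would first eliminate the signs: for fixed magnitudes $r_i:=|u_i|$ the term $c_iu_i$ is largest when $\operatorname{sgn}(u_i)=\operatorname{sgn}(c_i)$ (and $r_i=0$ is optimal when $c_i=0$), while $-|u_i|^p=-r_i^p$ and the constraint depend only on the $r_i$. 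Hence the problem becomes the maximization of $h(r):=\sum_{i=1}^m\bigl(|c_i|r_i-r_i^p\bigr)$ over the compact convex set $K:=\{r\in\R^m:r\ge 0,\ \sum_i r_i^q\le\rho^q\}$, and any maximizer $r$ yields $\bar u(s)=\sum_i r_i\operatorname{sgn}(c_i)e_i$.

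The decisive step is the change of variables $\sigma_i=r_i^q$, which maps $K$ bijectively onto the simplex $\Delta:=\{\sigma\in\R^m:\sigma\ge0,\ \sum_i\sigma_i\le\rho^q\}$ and turns $h$ into the separable function $\tilde h(\sigma)=\sum_{i=1}^m g_i(\sigma_i)$ with $g_i(\sigma)=|c_i|\sigma^{1/q}-\sigma^{p/q}$. A direct computation of $g_i''$ shows that $g_i''(\sigma)\ge0$ is equivalent, after writing $\sigma=r^q$, to $p(q-p)\,r^{p-1}\ge|c_i|(q-1)$. Because $p<1$, the left-hand side is decreasing in $r$, so the worst case on $K$ occurs at $r=\rho$ (i.e. $\sigma=\rho^q$), where the inequality reads precisely $\tfrac{|c_i|(q-1)}{p(q-p)}\rho^{1-p}\le1$. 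Thus \eqref{conditionpq} is exactly the statement that each $g_i$ is strictly convex on $(0,\rho^q]$; combined with the continuity of $g_i$ at $0$ (where $g_i'(0^+)=-\infty$) this gives convexity of $g_i$ on $[0,\rho^q]$, and hence convexity of $\tilde h$ on $\Delta$.

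The conclusion then follows from the elementary fact that a convex continuous function on the polytope $\Delta$ attains its maximum at a vertex, the vertices being $0$ and $\rho^q e_i$, $i=1,\dots,m$. Transported back through $\sigma_i=r_i^q$, the only candidate maximizers of $h$ over $K$ are $0$ and the points $\rho e_i$, with values $h(0)=0$ and $h(\rho e_i)=\rho^p\bigl(\rho^{1-p}|c_i|-1\bigr)$. Comparing these yields the statement. If $\rho^{1-p}\max_i|c_i|<1$, then every $h(\rho e_i)<0=h(0)$, forcing $r=0$ and $\bar u(s)=0$. If instead $\rho^{1-p}|c_i|>\max_{j\ne i}\{\rho^{1-p}|c_j|,1\}$, then $h(\rho e_i)-h(\rho e_j)=\rho(|c_i|-|c_j|)>0$ for $j\neq i$ and $h(\rho e_i)>0=h(0)$, so $\rho e_i$ is the unique maximizer and $\bar u(s)=\rho e_i\operatorname{sgn}(c_i)$.

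The main obstacle is the second-order computation pinning down \eqref{conditionpq}: one must verify the exact form of $g_i''$, check that the binding value of $r$ is the largest admissible one (namely $\rho$, since $\sigma_i\le\rho^q$ on $\Delta$), and justify convexity across the cusp at $\sigma=0$. A minor bookkeeping point is that the threshold produced by this computation naturally involves $|c_i(s)|$: for $c_i(s)\le0$ the inequality \eqref{conditionpq} holds automatically, and the hypothesis \eqref{conditionpq} imposed for every $i$ guarantees convexity of every $g_j$ simultaneously, which is what the vertex argument requires. I would also note in passing that the first alternative ($\bar u(s)=0$) in fact holds without \eqref{conditionpq}, since $|c_i|\rho^{1-p}<1$ already forces $|c_i|r_i<r_i^p$ for each $r_i\in(0,\rho]$.
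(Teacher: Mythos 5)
Your proof is correct and follows essentially the same route as the paper's: reduce to maximizing $\sum_i\bigl(c_iu_i-|u_i|^p\bigr)$ over $U$, substitute $w_i=|u_i|^q$ to obtain a separable function on the simplex $\{w\ge 0,\ \sum_i w_i\le\rho^q\}$, show that \eqref{conditionpq} makes that function convex there, and compare the vertex values $0$ and $\rho^p\bigl(\rho^{1-p}|c_i|-1\bigr)$. The only differences are cosmetic --- you treat all sign patterns at once via $r_i=|u_i|$ where the paper handles $c_i\ge 0$ and declares the remaining cases analogous --- and both arguments share the same small imprecision, which you rightly flag, that the convexity threshold produced by the second-derivative computation genuinely involves $|c_i(s)|$ rather than $c_i(s)$.
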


\begin{proof} 
The arguments are carried out for an arbitrary $s\in (0,\infty)$, and to simplify the notation the dependence of $c_i$ on $s$ is not indicated. 

By Lemma \ref{Optimality}, $\bar u$ maximizes the following function
\[
g(u):=\sum^m_{i=1}\left(c_i u_i -|u_i|^p\right),\ \text{for}\ u=(u_1,\ldots,u_m)\in\R^m.
\]
At first consider the case when $c_i\geq 0$ for $i=1,\ldots,m$. For any $u=(u_1,\ldots,u_m)\in\R^m$, if there exists some $i\in\{1,\ldots,m\}$ such that $u_i<0$, then due to the fact that $c_i\geq 0$ we have
\[
g(u)<g(\tilde u)\ \text{with}\ \tilde u=(u_1,\ldots,u_{i-1},-u_i,u_{i+1}\ldots,u_m),
\]
and consequently
\[
\bar u\in \left\{u\in\R^m\,:\,u_i\geq 0,\ \sum^m_{i=1}u_i^q\leq \rho^q,\ i=1,\ldots,m\right\}.
\]
For $i=1,\ldots,m$, we set
\[
w_i=u_i^q\ \text{and}\ h(w)=\sum^m_{i=1}\left(c_iw_i^{1/q}-w_i^{p/q}\right),\ \text{for}\ w=(w_1,\ldots,w_m),\ w_i\geq 0.
\]
Then the maximization of $g$ is transformed to
\[
\max\{h(w)\,:\,w\in\Omega\},
\]
where $\Omega$ is the polygon defined as
\[
\Omega=\left\{w\in\R^m\,:\,w_i\geq 0,\ \sum^m_{i=1}w_i\leq \rho^q,\ i=1,\ldots,m\right\}.
\]
For any $w\in\Omega\backslash\partial\Omega$, we have
\[
\frac{\partial^2 h}{\partial w_i^2}=\frac{p(q-p)}{q^2}\cdot\frac{1}{w_i^{2-p/q}}\left(1-\frac{c_i(q-1)}{p(q-p)}w_i^{(1-p)/q}\right)\ \text{and}\ \frac{\partial^2h}{\partial w_i\partial w_j}=0.
\]
The constraints satisfied by $w$ imply that
\[
\frac{\partial^2 h}{\partial w_i^2}\geq\frac{p(q-p)}{q^2}\cdot\frac{1}{\rho^{2q-p}}\left(1-\frac{c_i(q-1)}{p(q-p)}\rho^{1-p}\right)>0.
\]
Thus, $h$ is strongly convex in $\Omega\backslash\partial\Omega$. It is also strongly convex in $\Omega$ since $h$ is continuous. Thus the maximum of $h$ is obtained at the vertices of the polygon $\Omega$. By comparing the values of $h$ at each vertex, we get the following properties for the maximum $\bar w\in\Omega$
\[
\left\{
\begin{array}{ll}
\bar w=0 & \text{if}\ \rho^{1-p}\max_{i=1,\ldots,m}c_i<1,\\
\bar w=\rho^q e_i & \text{if}\ \rho^{1-p}c_i>\max_{j\in\{1,\ldots,m,j\neq i\}}\{\rho^{1-p}c_j,1\}.
\end{array}
\right.
\]
The desired result for $c_i\geq 0$, for $i=1,\ldots,m$, is obtained by replacing $\bar w_i$ by $\bar u_i^{q}$. The other cases when $c_i$ have different signs can be treated analogously.
\end{proof}
\begin{rem}
Condition \eqref{conditionpq} is always satisfied for $q=1$.
\end{rem}
We proceed to give the next sparsity result for the case with $\ell^ \infty$-constraints. For any $a,b\in\R$, the following notation is used:
\[
\llbracket a,b \rrbracket:=[\min\{a,b\},\max\{a,b\}].
\]
\begin{prop}\label{constraintsinf}
Given $0<p\leq 1$ and $q=\infty$, assume that $\bar u$ is a local optimal control for problem \eqref{ocinfini} with
\[
U=\prod^m_{i=1}[-\rho_i,\rho_i]
\]
where $\rho_i\geq 0$. Let $\bar y$ be the corresponding optimal trajectory and $\vp$ be the adjoint state. Then the following holds: for almost all $s\in (0,\infty)$
\[
\left\{
\begin{array}{llll}
\bar u_i(s)=0 & \text{if}\ \rho^{1-p}|c_i(s)|< 1,\\
\bar u_i(s)=-\rho_i\mathop{sgn} c_i(s) & \text{if}\ \rho^{1-p}|c_i(s)|>1,\\
\bar u_i(s)\in\{0,-\rho_i\mathop{sgn} c_i(s) & \text{if}\ \rho^{1-p}|c_i(s)|= 1,\ p\neq 1,\\
\bar u_i(s)\in\llbracket 0,-\rho_i\mathop{sgn} c_i(s)\rrbracket & \text{if}\ \rho^{1-p}|c_i(s)|= 1,\ p= 1.
\end{array}
\right.
\]
\end{prop}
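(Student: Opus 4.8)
The plan is to follow the same route as in the preceding proposition, namely to reduce the optimality condition to a pointwise maximization and then exploit the \emph{separable} structure of the constraint set. By Lemma~\ref{Optimality}(iii), for almost every $s\in(0,\infty)$ the value $\bar u(s)$ maximizes $-f(\bar y(s),u)\cdot\vp(s)-e^{-\lambda s}\ell(\bar y(s),u)$ over $u\in U$, and by the computation preceding the statement this is equivalent, up to an additive constant and the positive factor $\gamma e^{-\lambda s}$, to maximizing $g(u)=\sum_{i=1}^m\bigl(c_i(s)u_i-|u_i|^p\bigr)$. Since here $U=\prod_{i=1}^m[-\rho_i,\rho_i]$ is a box, both the objective and the feasible set split across coordinates; hence it suffices to solve, for each $i$ separately, the scalar problem of maximizing $\phi_i(t):=c_i(s)t-|t|^p$ over $t\in[-\rho_i,\rho_i]$.

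First I would carry out a sign reduction: since $|t|^p$ is even while $c_i(s)t$ is odd, the sign of any maximizer of $\phi_i$ agrees with that of $c_i(s)$, so I may assume $c_i(s)\ge 0$ and study $\phi_i(t)=c_i(s)t-t^p$ on $[0,\rho_i]$, reinstating the factor $\mathop{sgn}c_i(s)$ afterwards. The heart of the argument is then a convexity observation completely parallel to the vertex argument of the previous proposition. For $0<p<1$ one computes $\phi_i''(t)=p(1-p)t^{p-2}>0$ on $(0,\rho_i)$, so $\phi_i$ is strictly convex and attains its maximum over the interval at one of the two endpoints. Comparing $\phi_i(0)=0$ with $\phi_i(\rho_i)=\rho_i^p\bigl(\rho_i^{1-p}c_i(s)-1\bigr)$ produces the threshold $\rho_i^{1-p}|c_i(s)|=1$: strict inequality in one direction selects $t=0$, in the other direction selects the active endpoint $\rho_i\mathop{sgn}c_i(s)$, and equality makes the two endpoint values coincide, leaving exactly the pair $\{0,\rho_i\mathop{sgn}c_i(s)\}$ as the maximizing set. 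This accounts for the first three displayed cases.

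The case $p=1$ is then handled by noting the degeneracy $\phi_i(t)=(c_i(s)-1)t$, which is \emph{linear} on $[0,\rho_i]$. Here the strict inequalities $|c_i(s)|\gtrless 1$ again force the endpoints $0$ or $\rho_i\mathop{sgn}c_i(s)$, but in the equality case $|c_i(s)|=1$ the function is constant, so the \emph{whole} segment joining $0$ to the active endpoint is optimal; this is precisely the interval-valued conclusion of the fourth displayed case. Assembling the two regimes and restoring the sign of $c_i(s)$ yields the stated dichotomy, with the almost-everywhere qualifier inherited directly from Lemma~\ref{Optimality}.

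I expect the main obstacle to be the bookkeeping at the threshold $\rho_i^{1-p}|c_i(s)|=1$, where the structure of the maximizing set genuinely changes with $p$: strict convexity for $0<p<1$ isolates the two endpoints, whereas the loss of strict convexity at $p=1$ opens the entire segment, and these two situations must be separated carefully. Everything else—the coordinatewise decoupling afforded by the box constraint, the sign reduction, and the endpoint comparison—is routine once this one-dimensional picture is fixed.
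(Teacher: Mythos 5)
Your argument is correct and essentially identical to the paper's: both reduce, via Lemma~\ref{Optimality}(iii) and the box structure of $U$, to maximizing the scalar function $v\mapsto c_i(s)v-|v|^p$ over $[-\rho_i,\rho_i]$ coordinate by coordinate, conclude that the maximum sits at an endpoint (you via strict convexity of $t\mapsto c_i(s)t-t^p$ for $0<p<1$, the paper via the sign of the derivative on $(v^*,0)$), and read off the threshold $\rho_i^{1-p}|c_i(s)|=1$ by comparing endpoint values, with the degenerate linear case $p=1$ treated separately to produce the full optimal segment. Note only that your conclusion $\bar u_i(s)=\rho_i\mathop{sgn}c_i(s)$ in the supercritical case agrees with the paper's own proof (which obtains $\bar u_i=-\rho_i$ when $c_i\leq 0$) and with Proposition~\ref{constraintsLq}, but not with the minus sign printed in the statement, which appears to be a typo, and likewise the statement's $\rho^{1-p}$ should read $\rho_i^{1-p}$, exactly as in your derivation.
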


\begin{proof}
Again the dependence of $c_i$ on $s$ is not indicated to simply the notation.

For $i=1,\ldots,m$, let $g_i:\R\ra\R$ be defined by
\[
g_i(v)=c_i v-|v|^p,\ \text{for}\ v\in\R.
\]
It is trivial to see that 
\[
\bar u_i(s)\in \mathop{\arg\,\max}_{v\in [-\rho_i,\rho_i]}\,g_i(v).
\]
At first consider the case with $c_i\leq 0$. Then
\[
g_i(v)< 0=g_i(0),\ \text{for}\ v>0,
\]
and hence $\bar u_i\in (-\infty,0]$.
For $v< 0$, we have
\[
g_i'(v)=c_i+ p(-v)^{p-1}.
\]
Let $v^*<0$ be the solution of $g_i'(v)=0$. We obtain that $g_i'(v)>0$ for $v\in (v^*,0)$. Thus the maximum of $g_i$ on $[-\rho_i,0]$ is either obtained at $v=-\rho_i$ or $v=0$. Due to the fact that
\[
g_i(-\rho_i)=-c_i\rho_i - \rho_i^p< 0 \Leftrightarrow -c_i< \rho_i^{p-1},
\]
it is deduced that
\[
\left\{
\begin{array}{ll}
\bar u_i(s)=0 & \text{if}\ 0\leq -c_i< \rho_i^{p-1},\\
\bar u_i(s)=-\rho_i & \text{if}\ -c_i>\rho_i^{p-1}.
\end{array}
\right.
\]
Besides, if $-c_i=\rho_i^{p-1}$ and $p\neq 1$, then $g_i'(v)=0$ has a unique solution $v^*$ and $g(-\rho_i)=g(0)$. Thus,
\[
\bar u_i(s)\in\{-\rho_i,0\}.
\]
Otherwise, for $p=1$ and $-c_i=\rho_i^{p-1}=1$, we have $g_i(v)=0$ for $v\in [-\rho_i,0]$.  Thus,
\[
\bar u_i(s)\in [-\rho_i,0].
\]
The case $c_i\geq 0$ can be treated analogously. The desired result is obtained by combining the two cases.
\end{proof}

The last sparsity result concerns the case when $p=1$ and $q\in [1,\infty)$.
\begin{prop}\label{constraintsLq}
Given $p=1$, $1\leq q<\infty$ and $\rho>0$, let  $\bar u$ be a locally optimal control, $\bar y$ be the corresponding optimal trajectory and $\vp$ be the adjoint state. For $s>0$, we set
\[
I(s):=\{i\,:\,|c_i(s)|>1,\ i=1,\ldots,m\}.
\]
If $q=1$, then the following holds for $\bar u$ in the almost everywhere sense
\[
\left\{
\begin{array}{ll}
 %\bar u(s)=0 & \text{if}\ C<1,\\
 \bar u(s)\in\overline{\mathop{co}}\,\{0,\ \rho e_i\mathop{sgn} c_i(s),\ \text{for}\ |c_i(s)|=1\} & \text{if}\ I(s)=\emptyset,\\
 \bar u(s)\in\overline{\mathop{co}}\,\{\rho e_i\mathop{sgn} c_i(s),\ \text{for}\ |c_i(s)|=\max_{j=1,\ldots,m}|c_j(s)|\} & \text{if}\ I(s)\neq \emptyset.
\end{array}
\right.
\]
If $q>1$, then the following holds for $\bar u$ in the almost everywhere sense
\begin{equation}\label{sparseLq1p}
\left\{
\begin{array}{lll}
\bar u(s)\in\overline{\mathop{co}}\,\{0,\ \rho e_i\mathop{sgn} c_i(s),\ \text{for}\ |c_i(s)|=1\} & \text{if}\ I(s)=\emptyset,\\
\bar u_i(s)=0 & \text{if}\ I(s)\neq\emptyset\ \text{and}\ |c_i(s)|\leq 1,\\
\bar u_i(s)=\rho\mathop{sgn}\ c_i(s)\frac{(|c_i(s)|-1)^{q'-1}}{\left(\sum_{i:|c_i(s)|>1}(|c_i(s)|-1)^{q'}\right)^{1/q}} & \text{if}\ I(s)\neq\emptyset\ \text{and}\ |c_i(s)|>1.
\end{array}
\right.
\end{equation}
\end{prop}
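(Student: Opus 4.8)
The plan is to reduce the pointwise maximization to a finite-dimensional concave optimization over the $\ell^q$-ball and then read off the maximizers. By Lemma \ref{Optimality}(iii), for almost every $s$ the value $\bar u(s)$ maximizes over $u\in U$ the function $g(u)=\sum_{i=1}^m(c_iu_i-|u_i|)$ (here $p=1$ and $c_i=c_i(s)$), the $s$-dependent constants being irrelevant. Fix such an $s$ and drop it from the notation. Since $g$ is concave and $U$ is convex, the set of maximizers is a nonempty convex set, so it suffices to describe it. First I would align signs exactly as in the proof of Proposition \ref{constraintsinf}: replacing $u_i$ by $|u_i|\,\mathrm{sgn}\,c_i$ never decreases $g$, so every maximizer satisfies $u_i\,\mathrm{sgn}\,c_i\ge 0$. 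Writing $t_i=|u_i|\ge 0$ reduces the problem to maximizing $\phi(t)=\sum_{i=1}^m(|c_i|-1)\,t_i$ over $\{t\ge 0:\sum_i t_i^q\le\rho^q\}$, a linear functional over the positive part of the $\ell^q$-ball, with $\bar u_i=t_i\,\mathrm{sgn}\,c_i$.

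Next I would split according to the sign pattern of the coefficients $|c_i|-1$, which is precisely the dichotomy $I(s)=\emptyset$ versus $I(s)\neq\emptyset$. When $I(s)=\emptyset$ all coefficients satisfy $|c_i|-1\le 0$, so $\phi\le 0$ on the feasible set with equality exactly when $t_i=0$ on every coordinate with $|c_i|<1$. For $q=1$ the feasible set is the cross-polytope and this places the maximizers on the face spanned by $0$ and the vertices $\rho e_i\,\mathrm{sgn}\,c_i$ with $|c_i|=1$, i.e. in $\overline{\mathrm{co}}\{0,\rho e_i\,\mathrm{sgn}\,c_i:|c_i|=1\}$; for $q>1$ the same vanishing set yields the claimed inclusion. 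When $I(s)\neq\emptyset$ there is at least one strictly positive coefficient, so placing mass on any coordinate with $|c_i|\le 1$ is wasteful (weakly for $q=1$, strictly for $q>1$); hence $\bar u_i=0$ whenever $|c_i|\le 1$, and the optimization reduces to the block $I(s)$.

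On $I(s)$ I would treat $q=1$ and $q>1$ separately. For $q=1$ the restricted problem is the linear program $\max\sum_{i\in I}(|c_i|-1)t_i$ subject to $\sum_{i\in I}t_i\le\rho$, $t_i\ge 0$, whose optimal set concentrates the total mass $\rho$ on the coordinates attaining $\max_j|c_j|$, giving $\bar u(s)\in\overline{\mathrm{co}}\{\rho e_i\,\mathrm{sgn}\,c_i:|c_i|=\max_j|c_j|\}$. For $q>1$, set $a_i=|c_i|-1>0$ for $i\in I$; maximizing $\sum_{i\in I}a_it_i$ over $\{t\ge 0:\|t\|_q\le\rho\}$ is governed by Hölder's inequality, $\sum_{i\in I}a_it_i\le\rho\,\|a\|_{q'}$ with $1/q+1/q'=1$. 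Since the $\ell^q$-ball is strictly convex for $q>1$, equality holds at a unique point characterized by the Hölder equality condition $t_i^q\propto a_i^{q'}$, i.e. $t_i\propto a_i^{q'-1}$; normalizing to $\|t\|_q=\rho$ gives $t_i=\rho\,a_i^{q'-1}\big/\big(\sum_{j\in I}a_j^{q'}\big)^{1/q}$, which is exactly \eqref{sparseLq1p} after substituting $a_i=|c_i|-1$ and $\bar u_i=t_i\,\mathrm{sgn}\,c_i$.

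The sign-alignment reduction and the Hölder computation are routine; the main obstacle is the bookkeeping of the degenerate, non-unique cases. Precisely, wherever the coefficients tie at the threshold — the coordinates with $|c_i|=1$ when $I(s)=\emptyset$, and the coordinates attaining the maximum modulus when $q=1$ and $I(s)\neq\emptyset$ — the maximizer of the (piecewise) linear objective is not unique, and one must argue that the entire maximizer set is captured by the stated closed convex hull rather than by a single vertex. This is where concavity of $g$ (hence convexity of the solution set) is essential, together with the observation that for $q>1$ the strict convexity of the constraint collapses the ambiguity on the active block $I(s)$ to the single Hölder point, leaving ambiguity only through the threshold coordinates.
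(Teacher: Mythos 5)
Your proposal is correct and follows essentially the same route as the paper: reduce via the maximum condition in Lemma \ref{Optimality} to the pointwise maximization of $g(u)=\sum_i(c_iu_i-|u_i|)$ over the $\ell^q$-ball, then use the vertex/face structure of the cross-polytope for $q=1$ and H\"older's equality condition for $q>1$. The only cosmetic difference is that you handle signs by a single alignment argument ($u_i\mapsto|u_i|\,\mathrm{sgn}\,c_i$) where the paper decomposes $U$ into $2^m$ orthant pieces; the substance, including the treatment of the degenerate tie cases via convexity of the maximizer set, matches the paper's proof.
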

\begin{proof}
For $p=1$, $\bar u$ maximizes the function
\[
g(u):=\sum^m_{i=1}\left(c_iu_i-|u_i|\right).
\]
We start by proving the results with $q=1$. Consider first the domain
\[
\Omega_1:=\{u\in\R^m\,:\,u_i\geq 0,\ \sum^m_{i=1} u_i\leq \rho,\ i=1,\ldots,m\}.
\]
In $\Omega_1$,
\[
g(u)=\sum^m_{i=1}\left(c_i-1\right)u_i.
\]
Note that $g$ is linear in $\Omega_1$ which is a polyhedron. Hence the maximizer of $g$ in $\Omega_1$ is either a vertex of the polyhedron $\Omega_1$ or a convex combination of some vertices. The value of $g$ at the vertices are the following
\[
g(0)=0\ \text{and}\ g(\rho e_i)=\rho(c_i-1),\ \text{for}\ i=1,\ldots,m.
\]
It is then deduced that
\[
\left\{
\begin{array}{ll}
 \bar u(s)\in\overline{\mathop{co}}\,\{0,\ \rho e_i,\ \text{for}\ c_i=1\} & \text{if}\ \max_{i=1,\ldots,m} c_i\leq 1,\\
 \bar u(s)\in\overline{\mathop{co}}\,\{\rho e_i,\ \text{for}\ c_i=\max_{j=1,\ldots,m} c_j\} & \text{if}\ \max_{i=1,\ldots,m} c_i>1.
\end{array}
\right.
\]
The domain $U$ defined by \eqref{DefU} can be divided into $2^m$ polyhedra of the form $\Omega_1$ but with different signs of the $u_i$, and in each polyhedron analogous results on maximizers can be obtained by the arguments as in $\Omega_1$. The desired result concerning the maximizers in $U$ is then obtained.

\smallskip

We proceed to the case with $q>1$. Analogous to the previous case, let us consider first the maximization of $g$ in
\[
\Omega_2:=\{u\in\R^m\,:\,u_i\geq 0,\ \sum^m_{i=1} u_i^q\leq \rho^q,\ i=1,\ldots,m\}.
\]
We set the index set $I$ and $q'>1$ as follows
\[
I:=\{i\,:\, c_i>1,\ i=1,\ldots,m\},\ \text{and}\ \frac{1}{q}+\frac{1}{q'}=1.
\]

If $I=\emptyset$, we deduce the same result as in the previous case with $q=1$.

\smallskip

If $I\neq\emptyset$, by H\"older's inequality
\[
g(u)\leq \sum_{i\in I}(c_i-1)u_i
\leq \left(\sum_{i\in I}(c_i-1)^{q'}\right)^{1/q'}\left(\sum_{i\in I}u_i^q\right)^{1/q}
\leq \rho\left(\sum_{i\in I}(c_i-1)^{q'}\right)^{1/q'},
\]
where the equality holds when 
\[
\frac{u_i^q}{(c_i-1)^{q'}}\ \text{is constant for}\ i\in I, \ u_j=0\ \text{for}\ j\not\in I,\ \text{and}\ \sum_{i\in I}u_i^q=\rho^q.
\]
Direct computations show that
\[
u_i=\rho\frac{(c_i-1)^{q'-1}}{\left(\sum_{i\in I}(c_i-1)^{q'}\right)^{1/q}}\ \text{for}\ i\in I.
\]
Again the domain $U$ defined by \eqref{DefU} can be divided into $2^m$ parts with the same structure as $\Omega_2$, and in each part analogous results on maximizers can be obtained. This concludes the proof.
\end{proof}
%%%%%%%%%%%%%%%%%%%%%%%%%%%%%%%%%%%%%%%%%%%%%%%%%%%%%%%%%%%%%%%%%%%%%%%%%%%%%%%%%%%%%%%%
%%%%%%%%%%%%%%%%%%%%%%%%%%%%%%%%%%%%%%%%%%%%%%%%%%%%%%%%%%%%%%%%%%%%%%%%%%%%%%%%%%%%%%%%

%%%%%%%%%%%%%%%%%%%%%%%%%%%%%%%%%%%%%%%%%%%%%%%%%%%%%%%%%%%%%%%%%%%%%%%%%%%%%%%%%%%%%%%%
%%%%%%%%%%%%%%%%%%%%%%%%%%%%%%%%%%%%%%%%%%%%%%%%%%%%%%%%%%%%%%%%%%%%%%%%%%%%%%%%%%%%%%%%
\section{The Eikonal case with $L^1$-cost}
In order to get additional insight into the structure of the optimal controls with $\|u\|^p_p$ cost, the problem with Eikonal dynamics is analyzed in this section. The Eikonal dynamics system is the following: for $x\in\R^d$
\begin{equation}\label{Eikonal}
\left\{
\begin{array}{ll}
 \dot y(s)=u(s) & \text{for}\ s\in (0,\infty),\\
 y(0)=x,
\end{array}
\right.
\end{equation}
where $u$ takes value in
\[
U:=\{u=(u_1,\ldots,u_d)\in\R^d\,:\,\sum^d_{i=1}u_i^2\leq\rho^2\}\ \text{for some}\ \rho>0.
\]
The running cost is
\[
\ell(x,u)=\frac{1}{2}\|x\|^2_2+\gamma\sum^d_{i=1}|u_i|,
\]
where $(x,u)\in\R^d\times U$. Note that the dynamical system is linear and the cost functional is strictly convex in $x$ and convex in $u$, consequently the optimal state is unique, and as a consequence of \eqref{Eikonal} the optimal control is unique as well (in the almost everywhere sense).

Let $\bar u$ be the optimal control and $\bar y$ be the corresponding optimal trajectory. By Lemma \ref{Optimality} there exists $\vp:[0,\infty)\ra\R$ satisfying the adjoint state equation in the Carath\'eodory sense
\[
\left\{
\begin{array}{ll}
\dot \vp(s)=-e^{-\lambda s}\bar y(s) & \text{for}\ s\in (0,\infty),\\
\lim_{s\ra\infty}\vp(s)=0.
\end{array}
\right.
\]
Proposition \ref{constraintsLq} implies that
\[
\mathop{supp}(\bar u)\subset \{s\in [0,\infty)\,:\,|c_i(s)|\geq 1\},
\]
where 
\[
c_i(s)=-\frac{1}{\gamma}e^{\lambda s}\vp_i(s),\ \text{for}\ s\in [0,\infty).
\]
The optimal control has the following property.
\begin{lem}\label{usupport}
 The support of the optimal control $\bar u$ is bounded. 
\end{lem}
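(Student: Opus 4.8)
The plan is to turn the adjoint information into an integral representation of the quantities $c_i$. Integrating the adjoint equation $\dot\vp(s)=-e^{-\lambda s}\bar y(s)$ backwards from the terminal condition $\lim_{s\to\infty}\vp(s)=0$ gives
\[
\vp(s)=\int_s^\infty e^{-\lambda\tau}\bar y(\tau)\,d\tau,
\]
so that, after the substitution $\tau=s+r$,
\[
c_i(s)=-\frac1\gamma e^{\lambda s}\vp_i(s)=-\frac1\gamma\int_0^\infty e^{-\lambda r}\bar y_i(s+r)\,dr .
\]
Since $\|\dot{\bar y}(s)\|_2=\|\bar u(s)\|_2\le\rho$ the state is $\rho$-Lipschitz, and the finiteness of the optimal cost yields $\int_0^\infty e^{-\lambda s}\|\bar y(s)\|_2^2\,ds<\infty$; together with the inward character of the optimal motion (discussed below) this should give a uniform bound $\|\bar y\|_\infty\le M$ and hence $|c_i(s)|\le M/(\gamma\lambda)$.

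The delicate point is that the inclusion $\mathop{supp}(\bar u)\subset\{s:\max_i|c_i(s)|\ge1\}$ is by itself useless: in the Eikonal problem the optimal state converges to a \emph{nonzero} limit $y^*$ sitting on the boundary of the no-control region, so $|c_i(s)|\to1$ and the right-hand set is in general all of $[0,\infty)$. Thus Proposition~\ref{constraintsLq} cannot close the argument, and I would instead show directly that $\bar u$ vanishes on a tail $[T,\infty)$. The mechanism I would use is that the no-control region $\mathcal S:=\{x:\ |x_i|\le\gamma\lambda,\ i=1,\dots,m\}$ (the states from which the zero control is optimal, as the explicit solution $v(x)=\|x\|_2^2/(2\lambda)$ of \eqref{HJB} on $\mathcal S$ shows) is \emph{absorbing}: if $\bar y(s_0)\in\mathcal S$ then, by the dynamic programming principle, the tail of $\bar y$ is optimal for the problem started at $\bar y(s_0)$, and since staying put is the unique optimal response there, $\bar y$ is constant and $\bar u\equiv0$ on $[s_0,\infty)$.

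It then remains to prove that $\bar y$ reaches $\mathcal S$ in finite time. Here the structure of Proposition~\ref{constraintsLq} with $q=2$, $p=1$ is decisive: while the control is active one has $\bar u_i=\rho\,\mathop{sgn}(c_i)(|c_i|-1)/(\sum_j(|c_j|-1)^2)^{1/2}$ on $I(s)$ and $\bar u_i=0$ otherwise, whence $\|\bar u(s)\|_2=\rho$, so the state moves at \emph{full speed} as long as it is outside $\mathcal S$. Moreover $\mathop{sgn}(c_i)=-\mathop{sgn}(\vp_i)$, and $\vp_i(s)=\int_s^\infty e^{-\lambda\tau}\bar y_i\,d\tau$ shares the sign of $\bar y_i$, so each active component decreases in modulus and the motion is genuinely inward. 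Since $\mathcal S$ is a fixed neighbourhood of the origin and the trajectory approaches it at a rate bounded below by $\rho$, it enters $\mathcal S$ within a finite time $T\le \mathop{\mathrm{dist}}(x,\mathcal S)/\rho$; combined with the absorbing property this gives $\mathop{supp}(\bar u)\subset[0,T]$.

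The hard part is precisely this last step, namely controlling the direction of the active motion. Establishing $\mathop{sgn}(\vp_i)=\mathop{sgn}(\bar y_i)$ — equivalently, that no component of $\bar y$ changes sign while the control is on — is what makes ``inward at speed $\rho$'' rigorous, and it is also the ingredient behind the boundedness of $\bar y$ invoked above. Ruling out chattering, i.e.\ the state repeatedly leaving and re-entering a neighbourhood of $\mathcal S$, is the technical crux, and I expect it to be handled through the strict convexity in $x$ and the resulting uniqueness of the optimal state–control pair.
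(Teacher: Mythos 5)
Your plan is a genuinely different route from the paper's (a value-function/absorbing-region argument rather than an adjoint-based monotonicity argument), but as written it has gaps that you flag yourself without closing, and these are precisely the points where the paper's proof does its work. The central gap is the ``reaches $\mathcal S$ in finite time at speed $\rho$'' step. Being outside $\mathcal S$ (some $|\bar y_i(s)|>\lambda\gamma$) does \emph{not} imply the control is active: one only has the one-sided implication $|c_i(t)|>1\Rightarrow \bar y_i(t)>\lambda\gamma$ (from $|c_i(t)|\le \bar y_i(t)/(\lambda\gamma)$ for a nonnegative decreasing component), not its converse, so the control may shut off while the state is still outside $\mathcal S$. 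Your argument then gives nothing unless you can rule out the control switching back on — and that is exactly the monotone decay of $t\mapsto|c_i(t)|$, which the paper proves in its Step~2 after first establishing (Step~1) that the optimal state preserves sign. You defer both of these (``no component of $\bar y$ changes sign while the control is on'' and the no-chattering property) to ``strict convexity and uniqueness,'' but uniqueness of the optimal pair does not by itself deliver either statement. Even granting an always-active control, $\|\bar u(s)\|_2=\rho$ does not give a rate-$\rho$ decrease of $\mathrm{dist}(\bar y(s),\mathcal S)$, since the control direction is dictated by the $c_i$'s and not by the gradient of that distance; your bound $T\le \mathrm{dist}(x,\mathcal S)/\rho$ is not justified (and in the explicit $2$d solution of Theorem~6.4 the arrival time is the path length over $\rho$, which exceeds the distance over $\rho$).

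A secondary gap is the absorbing property of $\mathcal S$: checking that $\|x\|_2^2/(2\lambda)$ satisfies the HJB pointwise on the subdomain $\mathcal S$ does not identify it as the value function there, nor does it show that $u\equiv 0$ is the \emph{unique} optimal response from points of $\mathcal S$; a verification argument must control trajectories that leave $\mathcal S$, and in the paper this statement is the content of Lemma~\ref{ylambdagamma}, proved \emph{after} and \emph{using} Lemma~\ref{usupport}. For comparison, the paper's proof avoids all of this: it shows (for $x_i>0$) that $\bar y$ stays nonnegative by an explicit control modification, deduces $\bar u_i\le 0$ and hence that $\bar y_i$ and $|c_i|$ are monotonically decreasing, rules out $I(t)\neq\emptyset$ for all $t$ by showing this would force some component $\bar y_{\bar i}(t)\to-\infty$, and finally uses the identity $-c_i'(t)=\frac1\gamma\int_t^\infty e^{\lambda(t-s)}\bar u_i(s)\,ds$ to conclude $\bar u\equiv 0$ on the tail even though $|c_i|$ may equal $1$ there — which also answers your (correct) observation that the bare inclusion $\mathrm{supp}(\bar u)\subset\{\max_i|c_i|\ge 1\}$ is insufficient. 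If you want to salvage your route, the nonnegativity and monotonicity steps of the paper are the missing lemmas you would have to prove first.
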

\begin{proof}
 %Since the dynamical system is linear and the cost functional is strictly convex in $y$ and convex in $u$, the optimal trajectory $\bar y$ is then unique and the optimal control $u$ is also unique (in the almost everywhere sense).\\
 At first we consider the case $x=(x_1,\ldots,x_d)$ with $x_i> 0$ for $i=1,\ldots,d$. The proof is given in several steps.
 
 {\bf Step 1}: The optimal state $\bar y$ is nonnegative.\\
If this were not the true, then there exists an interval $(t_1,t_2)$ with $0<t_1<t_2\leq \infty$ and a component of the state, which without loss of generality we assume to be the first one, such that $\bar y_1(t)<0$ on $(t_1,t_2)$, $\bar y_1(t_1)=0$, and $\bar y_1(t_2)=0$ if $t_2<\infty$. Let us define a new control $\tilde u$ such that
 \[
 \tilde u_1=\left\{
 \begin{array}{ll}
  0 & \text{on}\ (t_1,t_2),\\
  \bar u_1 & \text{otherwise},
 \end{array}
 \right.
 \ \text{and}\ \tilde u_i=\bar u_i\ \text{for}\ i=2,\ldots,d.
 \]
 Let $\tilde y$ be the associated trajectory. We note that $\tilde u$ is feasible, $\tilde y_1(t)=0$ on $(t_1,t_2)$, and $\tilde y_i=\bar y_i$ for $i=2,\ldots,d$. Therefore,
 \[
 J(x,\tilde u)<J(x,\bar u),
 \]
 which is a contradiction to the optimality of $\bar u$.
 
 \smallskip
 
 {\bf Step 2}: $\bar y_i$ and $|c_i|$ are monotonically decreasing for each $i=1,\ldots,d$.\\
By the adjoint state equation we have 
 \[
 \vp(t)=\int^\infty_t e^{-\lambda s}\bar y(s)ds.
 \]
 Hence, $\vp_i$ is nonnegative and monotonically decreasing for $i=1,\ldots,d$. Therefore, $c_i$ is nonpositive and we deduce from Proposition \ref{constraintsLq} that $\bar u_i\leq 0$. This implies that $y_i$ is monotonically decreasing.
 Moreover, integration by parts yields that
 \[
 |c_i(t)|=-c_i(t)=\frac{1}{\gamma}e^{\lambda t}\int^\infty_t e^{-\lambda s}\bar y_i(s)ds =\frac{1}{\lambda\gamma}\left(\bar y_i(t)+\int^\infty_t e^{\lambda (t-s)}\bar u_i(s)ds\right),
 \]
 where we use that $t\mapsto \bar y(t)$ has sublinear growth (note that $\bar u$ is bounded) and hence $\lim_{t\ra\infty}e^{-\lambda t}\bar y(t)=0$. We further have
 \begin{equation}\label{ciprime}
 -c'_i(t)=\frac{1}{\gamma}\int^\infty_t e^{\lambda (t-s)}\bar u_i(s)ds\leq 0,
 \end{equation}
 which implies that $|c_i|$ is monotonically decreasing. In particular this implies that if $I(\bar t)=\emptyset$ for some $\bar t\geq 0$, then $I(t)=\emptyset$ for all $t\geq \bar t$.
 
 \smallskip
 
 {\bf Step 3}: Let us assume that $I(t)\neq \emptyset$ for all $t\geq 0$ and that $|c_i(t)|>1$ for all $t\geq 0$ and $i=1,\ldots,d$. Let $\bar c_i:=\lim_{t\ra\infty}|c_i(t)|\geq 1$, and denote
 \[
 \beta_i(t)=\frac{|c_i(t)|-1}{\sqrt{\sum^d_{j=1} \left(|c_j(t)|-1\right)^2}}\geq 0.
 \]
 Since $\lim_{t\ra\infty}\sum^d_{j=1}\beta_i(t)^2=1$ there exists $\bar i\in\{1,\ldots,d\}$ and $\bar \beta$ such that
 \[
 \lim_{t\ra\infty} \beta_{\bar i}(t)\ra \bar \beta>0.
 \]
 Hence there exists $\bar t>0$ such that $\beta_{\bar i}(t)>\frac{\bar \beta}{2}$ for all $t\geq \bar t$, and thus
 \[
 u_{\bar i}(t)\leq -\frac{\rho\bar \beta}{2}\ \text{for all}\ t\geq \bar t.
 \]
 This implies that $\lim_{t\in\infty} \bar y_{\bar i}(t)=-\infty$ which contradicts that $y_i$ is nonnegative.
 
 \smallskip
 
 {\bf Step 4}: Next we consider the case that $I(t)\neq \emptyset$ for all $t\geq 0$, then there exists some coordinate $\hat i$ and a $\bar t_{\hat i}>0$ such that $|c_{\hat i}(\bar t_{\hat i})|\leq 1$. Next let us choose all coordinates with this property and assume that these are the first $k$. Note that $k<d$ since otherwise $I(t)$ is not different from empty set for all $t\geq 0$. Thus,  
 \[
 |c_i(\bar t_i)|\leq 1\ \text{for }\ i\in\{1,\ldots,k\}.
 \]
 Since $t\mapsto |c_i(t)|$ is monotonically decreasing we have that $|c_i|\leq 1$ for all $t\geq \bar t:=\max\{\bar t_i\}^k_{i=1}$ and $i\in\{1,\ldots,k\}$. Thus by \eqref{sparseLq1p} we have
 \[
 \bar u_i(t)=0\ \text{for all}\ t\geq\bar t\ \text{and}\ i=1,\ldots,k.
 \]
 Turning to the remaining coordinates, by the same argument as in the previous step, it then follows that there exists $\bar i\in\{k+1,\ldots,d\}$, $t_{\bar i}$ and $\bar \beta>0$ such that
 \[
 \bar u_{\bar i}(t)\leq -\frac{\rho\bar \beta}{2}\ \text{for all}\ t\geq t_{\bar i}.
 \]
 Thus $\lim_{t\ra\infty} \bar y_{\bar i}(t)=-\infty$ which is a contradiction.
 
 \smallskip
 
 {\bf Step 5}: The only remaining possibility is that there exists $\tilde t$ such that $I(t)=\emptyset$ for all $t\geq \tilde t$. By \eqref{ciprime} and monotone decay of $t \mapsto |c_i(t)|$ we have $u(t)\equiv 0$ on $(\tilde t,\infty)$.
 This concludes that $\bar u$ has a bounded support for $x_i>0$, $i=1,\ldots,d$.
 
 \smallskip
 
 We proceed to prove that $\bar u$ has a bounded support for any $x\in\R^d$.
 
 {\bf Step 6}: If $x_1=0$ and $x_i>0$ for $i=2,\ldots,d$, then $\bar u_1=\bar y_1=0$ by optimality. We can use the above arguments to show that $\bar u_i(t)=0$ for all $t$ sufficiently large and $i=2,\ldots,d$.
 
 \smallskip
 
 {\bf Step 7}: The remaining cases for the signs of the initial conditions now easily follow.
 
\end{proof}

Now we investigate the behavior of the optimal trajectory.
\begin{lem}\label{ylambdagamma}
 For $i=1,\ldots,d$, the following holds.
 \begin{enumerate}[(i)]
  \item If $|x_i|\leq \lambda\gamma$, then $\bar y_i\equiv x_i$ on $[0,\infty)$.
  \item If $|x_i|>\lambda\gamma$, then there exists $T_i>0$ such that $\bar y_i(t)=\mathop{sgn}(x_i)\lambda\gamma$ for all $t\geq T_i$.
 \end{enumerate}
\end{lem}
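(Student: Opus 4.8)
The plan is to establish both items coordinate by coordinate. Since the dynamics, the running cost and the adjoint equation all decouple in the index $i$, it suffices to treat a fixed coordinate with $x_i>0$; the case $x_i<0$ is entirely analogous with signs reversed, and $x_i=0$ forces $\bar u_i\equiv\bar y_i\equiv 0$ by optimality (as in Step 6 of Lemma~\ref{usupport}). For $x_i>0$ the arguments of Lemma~\ref{usupport} give, independently of the other coordinates, that $\bar y_i\ge 0$ and $t\mapsto\bar y_i(t)$ is nonincreasing, that $\bar u_i\le 0$ with $t\mapsto|c_i(t)|$ nonincreasing (and continuous, since the adjoint is $C^1$), and that $\bar u$ has bounded support. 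The computation leading to \eqref{ciprime} also yields the identity
\[
|c_i(t)|=\frac{1}{\lambda\gamma}\left(\bar y_i(t)+\int_t^\infty e^{\lambda(t-s)}\bar u_i(s)\,ds\right),\qquad t\ge 0,
\]
which is the engine of the whole proof: since $\bar y_i(t)\le x_i$ and $\bar u_i\le 0$, it gives the uniform bound $|c_i(t)|\le \bar y_i(t)/(\lambda\gamma)\le x_i/(\lambda\gamma)$.

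For item (i), assume $0<x_i\le\lambda\gamma$, so the displayed identity forces $|c_i(t)|\le 1$ for every $t$. I would show $\bar u_i\equiv 0$ by contradiction. If the set $S=\{\bar u_i<0\}$ had positive measure, pick (by the Lebesgue density theorem) a density point $\tau$ of $S$ with $\bar u_i(\tau)\ne 0$. Proposition~\ref{constraintsLq} then requires $|c_i(\tau)|\ge 1$, hence $|c_i(\tau)|=1$; inserting this into the identity, and using $\bar y_i(\tau)\le\lambda\gamma$ together with $\bar u_i\le 0$, forces both $\bar y_i(\tau)=\lambda\gamma$ and $\int_\tau^\infty e^{\lambda(\tau-s)}\bar u_i(s)\,ds=0$. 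By positivity of the weight and the sign of $\bar u_i$, the latter means $\bar u_i=0$ a.e. on $[\tau,\infty)$, contradicting that $\tau$ is a density point of $S$. Thus $\bar u_i\equiv 0$ and $\bar y_i\equiv x_i$. (When $x_i<\lambda\gamma$ the contradiction is immediate, as $|c_i(\tau)|\le x_i/(\lambda\gamma)<1$ rules out $\bar u_i(\tau)\ne 0$ at once.)

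For item (ii), let $x_i>\lambda\gamma$. Being nonincreasing and bounded below by $0$, $\bar y_i$ converges to some $\ell_i\ge 0$, and by bounded support there is a finite $T_i:=\operatorname{ess\,sup}(\operatorname{supp}\bar u_i)$ with $\bar u_i=0$ a.e. on $(T_i,\infty)$, whence $\bar y_i\equiv\ell_i$ there. Evaluating the identity for $t>T_i$ (the integral then vanishes) and using continuity at $T_i$ gives $|c_i(t)|=\ell_i/(\lambda\gamma)$ for all $t\ge T_i$. I would then squeeze $\ell_i$ from both sides. If $\ell_i>\lambda\gamma$, then $|c_i(t)|>1$ for $t>T_i$, so Proposition~\ref{constraintsLq} forces $\bar u_i(t)\ne 0$ there, contradicting $t>T_i$; hence $\ell_i\le\lambda\gamma$. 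If $\ell_i<\lambda\gamma$, then $|c_i(T_i)|<1$, so by continuity $|c_i|<1$ on a left neighborhood $(T_i-\delta,T_i]$, on which Proposition~\ref{constraintsLq} yields $\bar u_i=0$; this contradicts that $T_i$ is the essential supremum of the support (which is nonempty, for otherwise $\bar y_i\equiv x_i>\lambda\gamma$ would give $|c_i|\equiv x_i/(\lambda\gamma)>1$ everywhere, forcing $\bar u_i\ne 0$). Therefore $\ell_i=\lambda\gamma$, and $\bar y_i(t)=\lambda\gamma$ for all $t\ge T_i$.

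The only delicate points are the two boundary situations: proving $\bar u_i\equiv 0$ \emph{exactly} at $x_i=\lambda\gamma$ in (i), where the density-point argument is genuinely needed because the obstacle $|c_i|\le 1$ is attained; and pinning the limit in (ii) to the exact value $\lambda\gamma$ rather than merely somewhere in the dead zone $[0,\lambda\gamma]$. Both are resolved by reading the threshold $|c_i|=1$ through the identity for $|c_i|$ at the edge of the support. The coupling of the coordinates through the constraint $\sum_i u_i^2\le\rho^2$ causes no trouble, since it has already been absorbed into the coordinatewise optimality description of Proposition~\ref{constraintsLq}.
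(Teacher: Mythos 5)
Your proof is correct and follows essentially the same route as the paper's: both rest on the integral identity for $|c_i|$, on the nonnegativity, monotonicity and bounded support established in Lemma~\ref{usupport}, and on the threshold $|c_i(s)|=1$ from Proposition~\ref{constraintsLq}. The only tactical differences are that in (i) the paper sidesteps the boundary case $|c_i|=1$ by noting that if $\bar y_i$ ever drops below $x_i$ it must pass through $x_i-\varepsilon$, after which $|c_i|<1$ strictly (where you instead use a density-point argument at the saturated identity), and in (ii) it pins down the limit of $\bar y_i$ at infinity rather than its value at the edge of the support; both variants are sound.
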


\begin{proof}
 The proof is given by several steps.
 
 {\bf Step 1}: If $x_i=0$, then $y_i\equiv 0$ is optimal. 
 
 \smallskip
 
 {\bf Step 2}: Consider the case $0<x_i\leq\lambda\gamma$. Arguing by contradiction, if there exists some $\bar t>0$ with $\bar y_i(\bar t)< x_i$, then there exists some $t_\e\in (0,\bar t)$ such that
 \[
 \bar y_i(\bar t)<\bar y_i(t_\e)=x_i-\e,\ \text{for some}\ \e\in(0,x_i-\bar y_i).
 \]
 For any $t\geq t_\e$, $\bar y(t)\leq \bar y_i(t_\e)$ since $\bar y_i$ is monotonically decreasing. Thus,
 \[
 |c_i(t)|\leq \frac{1}{\gamma} e^{\lambda t}\int^{\infty}_t e^{-\lambda s}(x_i-\e)ds\leq \frac{1}{\lambda\gamma}(\lambda\gamma-\e)<1,\ \text{for}\ t\geq t_\e,
 \]
 which by Proposition \ref{constraintsLq} yields that $\bar u_i(t)=0$ for $t\geq t_\e$. Therefore $\bar y_i(\bar t)=\bar y_i(t_\e)$, which is a contradiction.
 
 \smallskip
 
 {\bf Step 3}: Consider the case $x_i>\lambda\gamma$. Since $\bar y_i$ is nonnegative and monotonically decreasing, there exists $z\geq 0$ such that
 \begin{equation}\label{limyi}
 \lim_{t\ra\infty}\bar y_i(t)=z.
 \end{equation}
 Arguing by contradiction, if $z>\lambda\gamma$, then $\bar y_i(t)\geq z>\lambda\gamma$ for any $t\geq 0$. Consequently,
 \[
 \lim_{t\ra\infty}|c_i(t)|=\lim_{t\ra\infty}\frac{1}{\gamma} e^{\lambda t}\int^{\infty}_t e^{-\lambda s}\bar y_i(s) ds=\frac{z}{\lambda\gamma}>1.
 \]
 Using Proposition \ref{constraintsLq}, there exists $\bar t>0$ and $\bar \beta>0$ such that
 \[
 u_i(t)\leq -\bar \beta,\ \text{for all}\ t\geq\bar t.
 \]
 This implies that $\lim_{t\ra\infty} y_i(t)=-\infty$, which contradicts that $y_i(t)\geq 0$ for all $t\geq 0$.
 
 By a similar argument we can exclude the case that $z<\lambda\gamma$. Therefore, we conclude that $\lim_{t\ra\infty} \bar y_i(t)=\lambda\gamma$. Since $u_i$ has a bounded support, there exists $T_i>0$ such that $\bar y_i(t)=\lambda\gamma$ for all $t\geq T_i$.
 
 \smallskip
 
 {\bf Step 4}: The remaining cases for $x_i<0$ now easily follow.
\end{proof}

\begin{rem}\label{inftofini}
Lemma \ref{usupport} and Lemma \ref{ylambdagamma} imply that the original infinite horizon problem can be reduced to a finite horizon problem for the Eikonal case. In fact, there exists $T>0$ sufficiently large such that $\bar u(t)=0$ and $\bar y(t)=\lambda\gamma$ for $t\geq T$. Consider the following finite horizon problem:
\begin{equation}\label{finiteoc}
 \inf_{u\in L^{\infty}(0,T;U)}\left\{\int^T_0 e^{-\lambda s}\ell(y(s),u(s))ds,\ \dot y(s)=u(s)\ \text{in}\ (0,T),\ y(0)=x,\ y(T)=\lambda\gamma\right\}.
\end{equation}
This problem has a unique optimal solution. Since $(\bar y,\bar u,\vp)$ satisfies the optimality conditions in Lemma \ref{Optimality}, $(\bar y,\bar u,\vp)|_{[0,T]}$ satisfies Pontryagin's maximum principle for the problem \eqref{finiteoc} as well. By \cite[Corollary, pp.220]{C90}, $(\bar y,\bar u)|_{[0,T]}$ is the optimal solution of \eqref{finiteoc}. 
\end{rem}

Now let us construct the optimal control $\bar u$ precisely. We start by the $1$d case.
\begin{thm}\label{THMEikonal}
 For any $x\in\R$, the following holds:
 \begin{itemize}
  \item if $|x|\leq \lambda\gamma$, then $\bar u\equiv 0$ is the optimal control.
  \item if $|x|>\lambda\gamma$, let $\tau=\frac{|x|-\lambda\gamma}{\rho}$. Then
  \[
   \bar u(t)=\left\{
\begin{array}{ll}
-\rho\mathop{sgn}(x) & \text{for}\ t\in [0,\tau),\\
0 & \text{for}\ t\in[\tau,\infty),
\end{array}
\right.
\]
is the optimal control.
 \end{itemize}
\end{thm}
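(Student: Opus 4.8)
The plan is to reduce everything to the case $x>0$ by the symmetry $(x,\bar y,\bar u)\mapsto(-x,-\bar y,-\bar u)$, which leaves both the Eikonal dynamics $\dot y=u$ and the running cost $\frac12\|y\|_2^2+\gamma\sum_i|u_i|$ invariant. The regime $|x|\le\lambda\gamma$ is then immediate: Lemma \ref{ylambdagamma}(i) gives $\bar y\equiv x$, hence $\bar u=\dot{\bar y}=0$ almost everywhere. The substance lies in the regime $x>\lambda\gamma$, where I would exploit the qualitative information already assembled in Lemmas \ref{usupport} and \ref{ylambdagamma}: the optimal state $\bar y$ is nonnegative and monotonically decreasing, $\bar y(t)\to\lambda\gamma$, and $\bar y(t)=\lambda\gamma$ for all $t$ beyond some finite $T$.

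Next I would introduce the first hitting time $t^\ast:=\inf\{t\ge 0:\bar y(t)=\lambda\gamma\}$, which is well defined with $0<t^\ast\le T<\infty$, since $\bar y(0)=x>\lambda\gamma$ while $\bar y$ is continuous and equals $\lambda\gamma$ on $[T,\infty)$. Monotonicity together with the lower bound $\bar y\ge\lambda\gamma$ (a consequence of the decrease to the limit $\lambda\gamma$) forces $\bar y(t)>\lambda\gamma$ strictly for every $t<t^\ast$: if $\bar y(s)=\lambda\gamma$ held for some $s<t^\ast$, then $\bar y\equiv\lambda\gamma$ on $[s,\infty)$, contradicting the definition of $t^\ast$. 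Likewise, constancy of $\bar y$ beyond $t^\ast$ gives $\bar y\equiv\lambda\gamma$ on $[t^\ast,\infty)$.

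The key step is to convert this strict inequality into a bang bound on the adjoint quantity. Using the representation $\vp(t)=\int_t^\infty e^{-\lambda s}\bar y(s)\,ds$ and $c_1(t)=-\tfrac1\gamma e^{\lambda t}\vp(t)$, for $t<t^\ast$ one obtains
\[
|c_1(t)|=\frac1\gamma e^{\lambda t}\int_t^\infty e^{-\lambda s}\bar y(s)\,ds>\frac1\gamma e^{\lambda t}\int_t^\infty e^{-\lambda s}\lambda\gamma\,ds=1,
\]
the strict inequality stemming from $\bar y>\lambda\gamma$ on the nondegenerate interval $[t,t^\ast)$. Thus $I(t)\ne\emptyset$ on $[0,t^\ast)$, and Proposition \ref{constraintsLq} in its $q>1$ branch, specialised to $m=1$ (where the normalising factor collapses to $1$), yields $\bar u(t)=\rho\operatorname{sgn}(c_1(t))=-\rho$, since $\bar y\ge 0$ makes $\vp\ge 0$ and hence $c_1<0$. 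Integrating $\dot{\bar y}=-\rho$ from $x$ gives $\bar y(t)=x-\rho t$ on $[0,t^\ast)$, and matching $\bar y(t^\ast)=\lambda\gamma$ by continuity forces $t^\ast=(x-\lambda\gamma)/\rho=\tau$. On $[t^\ast,\infty)$ the state is constant, so $\bar u=\dot{\bar y}=0$. Since these necessary conditions determine $\bar u$ for almost every $t$, the uniqueness of the optimal control noted at the start of the section identifies it with the asserted function; the case $x<0$ follows from the symmetry, yielding $\bar u=-\rho\operatorname{sgn}(x)$ on $[0,\tau)$.

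I expect the main obstacle to be the strict inequality $|c_1(t)|>1$ on $[0,t^\ast)$ together with the sign bookkeeping: one must verify that $\bar y$ remains \emph{strictly} above $\lambda\gamma$ before $t^\ast$ rather than merely $\ge\lambda\gamma$, so that the comparison in the displayed estimate is strict, and one must check that the $q>1$ formula of Proposition \ref{constraintsLq} really reduces to the bang value $-\rho$ in one dimension with $\operatorname{sgn}(c_1)=-1$.
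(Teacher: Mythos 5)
Your argument is correct, but it proceeds by a genuinely different route than the paper. The paper's proof is a construct-and-verify argument: it writes down the candidate $(\bar y,\bar u)$ explicitly, computes the adjoint $\bar\vp$ in closed form, checks that $\bar\vp(t)>\gamma e^{-\lambda t}$ on $[0,\tau)$ and $\bar\vp(t)=\gamma e^{-\lambda t}$ on $[\tau,\infty)$ so that the maximum condition of Lemma \ref{Optimality}/Proposition \ref{constraintsLq} holds, and then needs an extra sufficiency step --- the reduction to a finite-horizon problem in Remark \ref{inftofini} together with Clarke's result that the maximum principle is sufficient there --- before invoking uniqueness. You instead run the necessary conditions forward: from nonnegativity, monotonicity, $\bar y\to\lambda\gamma$ and the finite support of $\bar u$ you introduce the hitting time $t^\ast$, show $\bar y>\lambda\gamma$ strictly before $t^\ast$, hence $|c_1|>1$ there, hence bang control $-\rho$, and the control is pinned down almost everywhere without any sufficiency argument; existence (Theorem \ref{THMexistence}, since $p=1$ and the dynamics are affine) plus the uniqueness noted at the start of Section 6 then close the proof. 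Your computation that the $q>1$ formula of \eqref{sparseLq1p} collapses to $\rho\operatorname{sgn}(c_1)$ in one dimension (exponent $q'-1-q'/q=0$) is right, as is the strictness of $|c_1(t)|>1$ from $\bar y>\lambda\gamma$ on a set of positive measure. Two small caveats: the monotonicity of $\bar y$ and of $|c_i|$ that you lean on is established inside the \emph{proofs} of Lemmas \ref{usupport} and \ref{ylambdagamma} (Steps 1--2) rather than in their statements, so you are implicitly importing those intermediate facts; and your approach, while cleaner here, does not extend as mechanically to the two-dimensional Theorem that follows, where the paper's guess-and-verify strategy is reused verbatim.
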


\begin{proof}
Assume without loss of generality that $x>0$. 
The case $x\leq \lambda\gamma$ has already been discussed in Lemma \ref{ylambdagamma}.

\smallskip

If $x> \lambda\gamma$, we set $\tau=\frac{x-\lambda\gamma}{\rho}$, and note that $\tau>0$. Define
\[
\bar y(t)=\left\{
\begin{array}{ll}
x-\rho t & \text{for}\ t\in [0,\tau),\\
x-\rho \tau & \text{for}\ t\in[\tau,\infty),
\end{array}
\right.
\bar u(t)=\left\{
\begin{array}{ll}
-\rho & \text{for}\ t\in [0,\tau),\\
0 & \text{for}\ t\in[\tau,\infty).
\end{array}
\right.
\]
By direction computation the adjoint state $\bar \vp$ is the following:
\[
\bar \vp(t)=\left\{
\begin{array}{ll}
\frac{1}{\lambda}(x-\rho t)e^{-\lambda t}+\frac{\rho}{\lambda^2}(e^{-\lambda \tau}-e^{-\lambda t}) & \text{for}\ t\in [0,\tau),\\
\frac{1}{\lambda}(x-\rho \tau)e^{-\lambda t} & \text{for}\ t\in[\tau,\infty).
\end{array}
\right.
\]
We claim that $(\bar y,\bar u,\bar \vp)$ satisfy the optimality conditions.

In fact, for $t\in(\tau,\infty)$,
\[
\bar \vp(t)=\frac{1}{\lambda}\lambda\gamma e^{-\lambda t}=\gamma e^{-\lambda t},
\]
and for $t\in (0,\tau)$,
\[
\frac{d[\bar \vp(t)-\gamma e^{-\lambda t}]}{dt}=(\rho t-x+\lambda\gamma)e^{-\lambda t}<(\rho\tau -x+\lambda\gamma)e^{-\lambda t}= 0.
\]
Thus,
\[
\left\{
\begin{array}{ll}
\bar \vp(t)>\gamma e^{-\lambda t} & \text{for}\ t\in[0,\tau),\\
\bar \vp(t)=\gamma e^{-\lambda t} & \text{for}\ t\in[\tau,\infty).
\end{array}
\right.
\]
Hence, the optimality conditions in Proposition \ref{constraintsLq} and Lemma \ref{Optimality} are satisfied by $(\bar y,\bar u,\bar \vp)$. Therefore, $(\bar y,\bar u)|_{[0,T]}$ is the optimal solution of the problem \ref{finiteoc} for a sufficient large $T>0$ by Remark \ref{inftofini}. If $u^*$ is the optimal control, then it is also optimal for \ref{finiteoc} by Remark \ref{inftofini}. Thus $\bar u=u^*$ on $[0,T]$. Note that $\bar u(t)=u^*(t)=0$ for $t\geq T$, consequently $\bar u=u^*$ is the optimal control.
\end{proof}

We proceed to give the optimal control in the $2$d case.
\begin{thm}
For any $x\in\R^2$, the following holds:
 \begin{itemize}
  \item if $|x_1|,|x_2|\leq \lambda\gamma$, then $\bar u\equiv 0$ is the optimal control.
  \item if $|x_1|>\lambda\gamma$ and $|x_2|\leq\lambda\gamma$, let $\tau=\frac{|x_1|-\lambda\gamma}{\rho}$. Then
  \[
   \bar u_1(t)=\left\{
\begin{array}{ll}
-\rho\mathop{sgn}(x) & \text{for}\ t\in [0,\tau),\\
0 & \text{for}\ t\in[\tau,\infty),
\end{array}
\right.\qquad
\bar u_2(t)\equiv 0\,,
\]
is the optimal control.
\item
if $|x_2|\geq |x_1|>\lambda\gamma$, let
\[
r=\|x\|_2,\ \tau_1=\frac{r}{\rho}\left(1-\frac{\lambda\gamma}{|x_1|}\right),\ \text{and}\ \tau_2=\frac{\lambda\gamma}{\rho}\left(\frac{|x_2|}{|x_1|}-1\right).
\]
Then
\[
\bar u_1(t)=\left\{
\begin{array}{ll}
-\rho\frac{x_1}{r} & \text{for}\ t\in [0,\tau_1),\\
0 & \text{for}\ t\in[\tau_1,\infty),
\end{array}
\right.
\]
\[
\bar u_2(t)=\left\{
\begin{array}{lll}
-\rho\frac{x_2}{r} & \text{for}\ t\in [0,\tau_1),\\
-\rho\mathop{sgn}(x_2) & \text{for}\ t\in [\tau_1,\tau_1+\tau_2],\\
0 & \text{for}\ t\in[\tau_1+\tau_2,\infty)
\end{array}
\right.
\]
is the optimal control.
 \end{itemize}
 In the remaining regions the optimal control is defined by symmetry.
\end{thm}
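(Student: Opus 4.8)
The three phases of the argument mirror the verification scheme of Theorem~\ref{THMEikonal}: in each region I would exhibit an explicit pair $(\bar y,\bar u)$, recover the adjoint state from $\vp(t)=\int_t^\infty e^{-\lambda s}\bar y(s)\,ds$ (the solution of the adjoint equation with $\vp(\infty)=0$), and check that $(\bar y,\bar u,\vp)$ meets the optimality conditions of Lemma~\ref{Optimality} together with the sparsity characterization of Proposition~\ref{constraintsLq}, here with $q=q'=2$. Because Remark~\ref{inftofini} reduces the problem to a strictly convex finite horizon problem with a unique solution obeying Pontryagin's principle, any admissible triple satisfying these conditions is the optimal one. The first two regions are immediate. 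If $|x_1|,|x_2|\le\lambda\gamma$, Lemma~\ref{ylambdagamma}(i) gives $\bar y_i\equiv x_i$, hence $\bar u\equiv0$. If $|x_1|>\lambda\gamma$ and $|x_2|\le\lambda\gamma$, then Lemma~\ref{ylambdagamma}(i) forces $\bar y_2\equiv x_2$ and $\bar u_2\equiv0$; with the second control switched off the $\ell^2$ budget reduces to $|u_1|\le\rho$ and the first coordinate is governed verbatim by the one-dimensional Theorem~\ref{THMEikonal}.

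The substantive region is $|x_2|\ge|x_1|>\lambda\gamma$, and by symmetry I may take $x_1,x_2>0$. The candidate is organized in three phases. On $[0,\tau_1)$ the control is radial, $\bar u=-\rho\,x/r$, so $\bar y_i(s)=x_i(1-\rho s/r)$; the choice of $\tau_1$ is exactly what brings the first coordinate to its target, $\bar y_1(\tau_1)=\lambda\gamma$, while $\bar y_2(\tau_1)=\lambda\gamma\,x_2/x_1\ge\lambda\gamma$. On $[\tau_1,\tau_1+\tau_2]$ the first coordinate is frozen at $\lambda\gamma$ and the second is driven at full speed, $\bar u_2=-\rho$, the value of $\tau_2$ being chosen so that $\bar y_2(\tau_1+\tau_2)=\lambda\gamma$. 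On $[\tau_1+\tau_2,\infty)$ both coordinates rest at $\lambda\gamma$ and $\bar u\equiv0$. The adjoint $\vp$ is then obtained by integrating $e^{-\lambda s}\bar y(s)$ backward phase by phase, an elementary but lengthy computation.

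For the verification it is convenient to record, using $\tfrac1\gamma e^{\lambda t}\int_t^\infty e^{-\lambda s}\lambda\gamma\,ds=1$, the identity
\[
|c_i(t)|-1=\frac1\gamma e^{\lambda t}\int_t^\infty e^{-\lambda s}\big(\bar y_i(s)-\lambda\gamma\big)\,ds ,
\]
whose integrand is nonnegative since $\bar y_i\ge\lambda\gamma$ throughout. On the resting phase the integrand vanishes, so $|c_i|\equiv1$ and, as $I=\emptyset$ there, Proposition~\ref{constraintsLq} permits $\bar u\equiv0$. On the middle phase $\bar y_1\equiv\lambda\gamma$ gives $|c_1|\equiv1$ while $\bar y_2>\lambda\gamma$ gives $|c_2|>1$, so $I=\{2\}$ and the formula~\eqref{sparseLq1p} collapses (for $q=q'=2$) to $\bar u_1=0$, $\bar u_2=-\rho$, as required.

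The crux is the radial phase $[0,\tau_1)$. There both integrands are strictly positive, so $|c_1|,|c_2|>1$, $I=\{1,2\}$, and~\eqref{sparseLq1p} prescribes
\[
\bar u_i(t)=-\rho\,\frac{|c_i(t)|-1}{\sqrt{(|c_1(t)|-1)^2+(|c_2(t)|-1)^2}} .
\]
Matching this with $\bar u_i=-\rho\,x_i/r$ demands that the vectors $\big(|c_1(t)|-1,\,|c_2(t)|-1\big)$ and $(x_1,x_2)$ be parallel for every $t\in[0,\tau_1)$, i.e.
\[
x_2\int_t^\infty e^{-\lambda s}\big(\bar y_1(s)-\lambda\gamma\big)\,ds=x_1\int_t^\infty e^{-\lambda s}\big(\bar y_2(s)-\lambda\gamma\big)\,ds .
\]
This is the main obstacle and it is genuinely delicate: during the radial phase $\bar y_2(s)-\lambda\gamma$ is \emph{not} proportional to $\bar y_1(s)-\lambda\gamma$ (the common shift by $\lambda\gamma$ destroys the proportionality $\bar y_2=\tfrac{x_2}{x_1}\bar y_1$), and the tails of the two integrals differ because the second coordinate keeps moving on $[\tau_1,\tau_1+\tau_2]$ after the first has stopped. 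The plan is to evaluate the three-piece integrals explicitly and test the parallelism directly, while also confirming the strict inequalities $|c_i(t)|>1$ on $[0,\tau_1)$ and the sign and monotonicity properties supplied by Lemmas~\ref{usupport} and~\ref{ylambdagamma}. If instead the identity holds only approximately, the true phase-$1$ path would be a curve whose tangent tracks $\big(|c_1|-1,|c_2|-1\big)$; deciding between these alternatives is precisely the hardest step. Once the direction condition is secured, Remark~\ref{inftofini} and uniqueness of the optimal control finish the proof.
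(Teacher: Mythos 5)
Your overall strategy is the same as the paper's: write down the candidate $(\bar y,\bar u)$, recover $\vp(t)=\int_t^\infty e^{-\lambda s}\bar y(s)\,ds$, verify the conditions of Lemma~\ref{Optimality} and Proposition~\ref{constraintsLq} with $q=q'=2$, and conclude by the reduction of Remark~\ref{inftofini} and uniqueness. Your treatment of the first two regions (Lemma~\ref{ylambdagamma}(i) plus the one--dimensional Theorem~\ref{THMEikonal}) is fine and matches the paper. But for the region $|x_2|\ge|x_1|>\lambda\gamma$ you have correctly isolated the only nontrivial step --- that on the radial phase, where $I(t)=\{1,2\}$, formula \eqref{sparseLq1p} forces $\bar u(t)$ to be proportional to $-\bigl(|c_1(t)|-1,\,|c_2(t)|-1\bigr)$, so the candidate $\bar u=-\rho x/r$ is an extremal only if this vector is parallel to $(x_1,x_2)$ on all of $[0,\tau_1)$ --- and then you explicitly decline to verify it. As written, the proof therefore stops exactly at the step you yourself identify as the hardest, and this is a genuine gap, not a routine computation to be deferred.

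Moreover, carrying out the test shows that the parallelism \emph{fails} whenever $x_2>x_1$ in this region. With your identity $|c_i(t)|-1=\tfrac1\gamma e^{\lambda t}\int_t^\infty e^{-\lambda s}\bigl(\bar y_i(s)-\lambda\gamma\bigr)ds$ and the fact that $x_2\bar y_1(s)-x_1\bar y_2(s)\equiv0$ on the radial segment, one gets for $0\le t<\tau_1$
\[
x_2\bigl(|c_1(t)|-1\bigr)-x_1\bigl(|c_2(t)|-1\bigr)
=\frac{e^{\lambda t}}{\gamma}\left[\lambda\gamma(x_1-x_2)\int_t^{\tau_1}e^{-\lambda s}ds-x_1\int_{\tau_1}^{\tau_1+\tau_2}e^{-\lambda s}\bigl(\bar y_2(s)-\lambda\gamma\bigr)ds\right],
\]
which is strictly negative: the constant shift by $\lambda\gamma$ contributes $\lambda\gamma(x_1-x_2)<0$ on the common segment, and the tail on $[\tau_1,\tau_1+\tau_2]$, where only $\bar y_2$ still exceeds $\lambda\gamma$, adds another negative term. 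Hence $\tfrac{|c_1(t)|-1}{|c_2(t)|-1}<\tfrac{x_1}{x_2}$ on $[0,\tau_1)$, the straight radial candidate does not maximize the Hamiltonian against its own adjoint, and the verification cannot be completed along the planned lines; of the two alternatives you mention, it is the second (a curved first phase whose tangent tracks $\bigl(|c_1|-1,|c_2|-1\bigr)$) that is consistent with the necessary conditions. You should be aware that the paper's own proof does not resolve this either: it verifies only the scalar conditions $\bar\vp_1(t)>\gamma e^{-\lambda t}$ on $[0,\tau_1)$ and equality afterwards (i.e.\ $|c_1|>1$ versus $|c_1|=1$), coordinate by coordinate, and never checks the ratio prescribed by \eqref{sparseLq1p}; so the step you flagged is precisely the step the published argument elides, and it is not closable as stated.
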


\begin{proof}
The first two cases can be seen as extensions of the result in $1$d case. Consider the case with $|x_2|\geq |x_1|>\lambda\gamma$, and assume without loss of generality that $x_2\geq x_1>0$. Let $(\bar y_1,\bar y_2)$ be the trajectory corresponding to $(\bar u_1,\bar u_2)$, then
\[
\bar y_1(t)=\left\{
\begin{array}{ll}
\left(1-\frac{\rho}{r}t\right)x_1 & \text{for}\ t\in [0,\tau_1),\\
\lambda\gamma & \text{for}\ t\in[\tau_1,\infty),
\end{array}
\right.
\]
\[
\bar y_2(t)=\left\{
\begin{array}{lll}
\left(1-\frac{\rho}{r}t\right)x_2 & \text{for}\ t\in [0,\tau_1),\\
\lambda\gamma\frac{x_2}{x_1}-\rho t & \text{for}\ t\in[\tau_1,\tau_1+\tau_2],\\
\lambda\gamma & \text{for}\ t\in[\tau_1+\tau_2,\infty).
\end{array}
\right.
\]
We check the optimality of $(\bar y_1,\bar u_1)$. The adjoint state is given by
\[
\bar \vp_1(t)=\left\{
\begin{array}{ll}
\frac{x_1}{\lambda}\left(1-\frac{\rho}{r}t\right)e^{-\lambda t}+\frac{\rho x_1}{\lambda^2 r}(e^{-\lambda \tau_1}-e^{-\lambda t}) & \text{for}\ t\in[0,\tau_1),\\
\frac{x_1}{\lambda}\left(1-\frac{\rho}{r}\tau_1\right)e^{-\lambda t} & \text{for}\ t\in[\tau_1,\infty),
\end{array}
\right.
\]
and for $t>\tau_1$, we have
\[
\bar \vp_1(t)=\frac{x_1}{\lambda}\left(1-\frac{\rho}{r}\tau_1\right)e^{-\lambda t} =\gamma e^{-\lambda t}.
\]
For $t\in [0,\tau_1)$,
\[
\frac{d[\bar \vp_1(t)-\gamma e^{-\lambda t}]}{dt}=\left(\lambda\gamma-x_1+\frac{\rho x_1}{r} t\right)e^{-\lambda t} < \left(\lambda\gamma-x_1+\frac{\rho x_1}{r} \tau_1\right)e^{-\lambda t}=0.
\]
Therefore,
\[
\left\{
\begin{array}{ll}
\bar \vp_1(t)>\gamma e^{-\lambda t} & \text{for}\ t\in[0,\tau_1),\\
\bar \vp_1(t)=\gamma e^{-\lambda t} & \text{for}\ t\in[\tau_1,\infty).
\end{array}
\right.
\]
It is then deduced that $(\bar y_1,\bar u_1,\bar \vp_1)$ satisfies Proposition \ref{constraintsLq}, and the optimality conditions in Lemma \ref{Optimality} consequently. The optimality of $(\bar y_2,\bar u_2)$ can be checked analogously. Finally by the same arguments as in the $1$d case, $(\bar y,\bar u)$ is the optimal control.
\end{proof}

For the general case, the optimal control is given in the following corollary as an extension of the $2$d case. The optimality can be checked analogously as in the $2$d case and we skip the proof.
\begin{cor}
For any $x\in\R^d$, assume without loss of generality that 
\[
\lambda\gamma<|x_1|\leq|x_2|\leq\cdots\leq|x_n|,\ \text{and}\ |x_i|\leq\lambda\gamma\quad \text{for}\quad i=n+1,\ldots,d.
\]
We set $x_0=\lambda\gamma$, $t_0=0$ and for $k=1,\ldots,n$
\[
r_k=\frac{\lambda\gamma}{x_{k-1}}\sqrt{\sum^n_{j=k}x_j^2},\ \tau_k=\frac{r_k}{\rho}\left(1-\frac{|x_{k-1}|}{|x_k|}\right),\ \text{and}\ t_k=\sum^k_{j=1} \tau_j.
\]
Then the following holds.
\begin{itemize}
 \item For $i=1,\ldots,n$,
 \[
 \bar u_i(t)=\left\{
 \begin{array}{ll}
  -\frac{\rho x_i}{r_k x_{k-1}}\lambda\gamma & \quad\text{for}\quad t\in [t_{k-1},t_k),\ k=1,\ldots,i,\\
  0 & \text{for}\quad t\geq t_i.
 \end{array}
 \right.
 \]
 \item For $i=n+1,\ldots,d$, $\bar u_i\equiv 0$.
\end{itemize}
\end{cor}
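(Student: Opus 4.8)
The plan is to mirror the verification used in the $2$d case: exhibit the triple $(\bar y,\bar u,\bar\vp)$ explicitly, show it satisfies the first order conditions of Lemma~\ref{Optimality} together with the pointwise maximizer formula of Proposition~\ref{constraintsLq}, and then promote this to genuine optimality through the uniqueness and finite-horizon reduction already available. By the invariance of the problem under sign changes of the coordinates, and by Lemma~\ref{ylambdagamma}(i), which fixes the coordinates with $|x_i|\le\lambda\gamma$ (forcing $\bar u_i\equiv0$ there), it suffices to treat $0<x_1\le\cdots\le x_n$ with $x_i>\lambda\gamma$ and $\bar u_i\equiv0$ for $i>n$. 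First I would check that the prescribed $\bar u$ is feasible and that the trajectory has the claimed radial structure: an induction on the phase index $k$ shows that on $[t_{k-1},t_k)$ every still-active coordinate $i\ge k$ satisfies $\bar y_i(t)=\frac{\lambda\gamma}{x_{k-1}}x_i\,g_k(t)$ with the common scalar factor $g_k(t)=1-\frac{\rho}{r_k}(t-t_{k-1})$. The definition of $r_k$ forces $\sum_{i\ge k}\bar u_i^2=\rho^2$, while a direct computation gives $g_k(t_k)=x_{k-1}/x_k$, so coordinate $k$ reaches $\lambda\gamma$ exactly at $t_k$ and is held there afterward.

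Next I would introduce the adjoint $\bar\vp_i(t)=\int_t^\infty e^{-\lambda s}\bar y_i(s)\,ds$, which by construction solves the Carath\'eodory adjoint equation \eqref{dsadjoint} and satisfies $\lim_{T\to\infty}\bar\vp(T)=0$, so that conditions (i)--(ii) of Lemma~\ref{Optimality} hold automatically. The threshold part of the maximizer condition is then handled exactly as in Theorem~\ref{THMEikonal}: once coordinate $i$ is held at $\lambda\gamma$ one has $\bar\vp_i(t)=\gamma e^{-\lambda t}$, i.e. $|c_i(t)|=1$, for all $t\ge t_i$, while the monotone decay of $|c_i|$ from Lemma~\ref{usupport}, which is strict on the active set, yields $|c_i(t)|>1$ for $t<t_i$. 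This identifies on each phase $[t_{k-1},t_k)$ the active index set as $I(t)=\{k,\dots,n\}$, placing every coordinate in the correct branch of Proposition~\ref{constraintsLq}.

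The decisive step, and the one I expect to be the main obstacle, is the \emph{directional} part of the optimality condition in the regime where several coordinates are simultaneously active. Since $q=2$ and $p=1$, Proposition~\ref{constraintsLq} prescribes, on a phase with $I(t)=\{k,\dots,n\}$, the control $\bar u_i(t)=-\rho\,(|c_i(t)|-1)\big/\big(\sum_{j\ge k}(|c_j(t)|-1)^2\big)^{1/2}$, whereas the candidate points in the fixed direction $(x_k,\dots,x_n)$. Matching the two amounts to showing that $(|c_i(t)|-1)_{i\ge k}$ stays parallel to $(x_i)_{i\ge k}$ throughout the phase, equivalently that $\bar\vp_i(t)-\gamma e^{-\lambda t}$ is proportional to $x_i$ for $i\ge k$. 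I would attempt this by the explicit phase-by-phase integration of $\bar\vp_i$ as in the $1$d and $2$d computations, carefully tracking that the upper integration limit $t_i$ itself depends on $i$; the required proportionality has to emerge from the interplay between this $i$-dependent cut-off and the radial form of $\bar y$. This coupling, trivial in the $1$d case and only implicit in the $2$d argument, is the genuinely delicate point.

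Finally, once the maximizer condition is secured on every phase and $\bar u\equiv0$, $\bar y\equiv\lambda\gamma$ beyond $t_n$, I would conclude as in Theorem~\ref{THMEikonal}: by Remark~\ref{inftofini} the problem reduces to the finite-horizon problem \eqref{finiteoc}, whose unique solution is characterized by Pontryagin's principle via \cite[Corollary, pp.~220]{C90}. Since $(\bar y,\bar u,\bar\vp)|_{[0,T]}$ satisfies that principle, $\bar u$ coincides with the optimal control, which extends by zero to all of $[0,\infty)$.
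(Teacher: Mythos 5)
Your overall strategy is the one the paper intends (the paper itself skips this proof, saying only that the optimality "can be checked analogously as in the 2d case"), and you are right to single out the directional part of the maximum condition as the crux. The problem is that this step, which you leave as a plan rather than a computation, actually fails when carried out. Already in the two-dimensional case with $0<\lambda\gamma<x_1<x_2$, take $\vp_i(t)=\int_t^\infty e^{-\lambda s}\bar y_i(s)\,ds$ for the candidate trajectory and use the integration-by-parts identity $|c_i(t)|=\frac{1}{\lambda\gamma}\bigl(\bar y_i(t)+\int_t^\infty e^{\lambda(t-s)}\bar u_i(s)\,ds\bigr)$ from Step 2 of Lemma \ref{usupport}. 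On $[0,\tau_1)$ this gives
\[
\lambda\gamma\bigl(|c_1(t)|-1\bigr)=x_1B(t)-\lambda\gamma,\qquad
\lambda\gamma\bigl(|c_2(t)|-1\bigr)=x_2B(t)-\lambda\gamma-\tfrac{\rho}{\lambda}e^{\lambda(t-\tau_1)}\bigl(1-e^{-\lambda\tau_2}\bigr),
\]
with the common factor $B(t)=1-\frac{\rho}{r}t-\frac{\rho}{\lambda r}\bigl(1-e^{\lambda(t-\tau_1)}\bigr)$. Proposition \ref{constraintsLq} with $q=q'=2$ forces the active part of the control to be parallel to $\bigl(|c_i(t)|-1\bigr)_{i\in I(t)}$, so matching the radial candidate requires $\frac{\rho}{\lambda}e^{\lambda(t-\tau_1)}\bigl(1-e^{-\lambda\tau_2}\bigr)\equiv\lambda\gamma\bigl(\frac{x_2}{x_1}-1\bigr)=\rho\tau_2$ on $[0,\tau_1)$, i.e.\ $e^{\lambda(t-\tau_1)}\bigl(1-e^{-\lambda\tau_2}\bigr)\equiv\lambda\tau_2$. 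The left-hand side is strictly increasing in $t$ and equals $1-e^{-\lambda\tau_2}<\lambda\tau_2$ at $t=\tau_1$, so the identity fails everywhere on $[0,\tau_1)$; in fact $\frac{|c_1(t)|-1}{x_1}<\frac{|c_2(t)|-1}{x_2}$ there, so the Hamiltonian maximizer tilts strictly more toward $e_2$ than the radial direction $(x_1,x_2)/r$.

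Consequently the candidate control of the corollary violates the necessary condition (iii) of Lemma \ref{Optimality} whenever at least two simultaneously active coordinates have distinct moduli, and no verification along the lines you propose --- or along the lines of the paper's own 2d argument, which only checks the scalar thresholds $|c_i|\gtrless 1$ coordinate by coordinate and never the joint directional condition --- can be completed. The proportionality you need does hold in the degenerate situations (a single active coordinate, or all active $|x_i|$ equal) and to first order as $\lambda\to0^+$, which is presumably why the numerics with $\lambda=0.2$ appear consistent; but for fixed $\lambda>0$ the stated control is not a Pontryagin extremal, so the statement needs correction rather than your proof completion. A positive route would be to solve the coupled system given by the adjoint equation and the parallelism requirement, which yields an optimal direction varying continuously in $t$ and weighting the larger coordinates more than proportionally.
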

%%%%%%%%%%%%%%%%%%%%%%%%%%%%%%%%%%%%%%%%%%%%%%%%%%%%%%%%%%%%%%%%%%%%%%%%%%%%%%%%%%%%%%%%
%%%%%%%%%%%%%%%%%%%%%%%%%%%%%%%%%%%%%%%%%%%%%%%%%%%%%%%%%%%%%%%%%%%%%%%%%%%%%%%%%%%%%%%%

%%%%%%%%%%%%%%%%%%%%%%%%%%%%%%%%%%%%%%%%%%%%%%%%%%%%%%%%%%%%%%%%%%%%%%%%%%%%%%%%%%%%%%%%
%%%%%%%%%%%%%%%%%%%%%%%%%%%%%%%%%%%%%%%%%%%%%%%%%%%%%%%%%%%%%%%%%%%%%%%%%%%%%%%%%%%%%%%%
\section{Numerical experiments}
In the following, we present numerical experiments illustrating the structural properties discussed in the previous sections related to the minimization of 
\[J(x,u):=\int\limits_0^{\infty}\left(\frac{\|x(s)\|_2^2}{2}+\gamma\|u(s)\|_p^p\right)\,e^{-\lambda s}\,ds\,.
\]
In order to obtain approximate controllers for the infinite horizon optimal control problem, we follow the dynamic programming approach. The solution of the corresponding Hamilton-Jacobi-Bellman equation \eqref{HJB} is numerically approximated by a first-order, semi-Lagrangian scheme. In particular, when $p\leq 1$, we apply the approximation scheme presented in \cite{KKK15}, where the minimization of the Hamiltonian \eqref{Hamiltonian} is performed by a semismooth Newton method. When $p=0.5$, we resort to the scheme presented in \cite{AFK15}, where the minimization is carried out by an evaluation of the Hamiltonian over a discrete set of control values. Boundedness and Lipschitz continuity with respect to the state variable of both the dynamics $f (x,u)$ and the running cost $l(x,u)$ are sufficient to guarantee the convergence of the semi-Lagrangian scheme for the value function. However, convergence of the optimal controllers is a more delicate issue, as it requires among other hypotheses, the convexity of the running cost with respect to the
controls \cite[Chapter 8]{FFSIAM}. This assumption is not fulfilled when $p < 1$. Nevertheless, the proposed algorithm converges and provides optimal controls with the expected properties. For all the tests, the computational domain is $\Omega=[-1,1]^2$, which is discretized with a mesh parameter $k=0.025$.  The discount factor is set $\lambda=0.2$, and the control weight $\gamma=1$.

\subsection{Test 0: preliminaries}
We begin by recalling standard results in the control of Eikonal dynamics of the form
\begin{align*}
\dot x_1(s)&=u_1(s)\\
\dot x_2(s)&=u_2(s)\,,
\end{align*}
where $\|(u_1(s),u_2(s))\|_2\leq 1$. We first consider the case of infinite horizon control with a quadratic control penalization, i.e. $p=2$. Results shown in Figure \ref{eikp2} illustrate a simple setting where the control constraint is inactive and, as in the linear quadratic setting, the associated value function is quadratic and the optimal feedback is linear. The presented trajectories for the initial condition $x(0)=(-0.75,-0.6)$ exhibit asymptotic stabilization, which is the standard result for this case.

\begin{figure}[!h]
\includegraphics[width=0.495\textwidth]{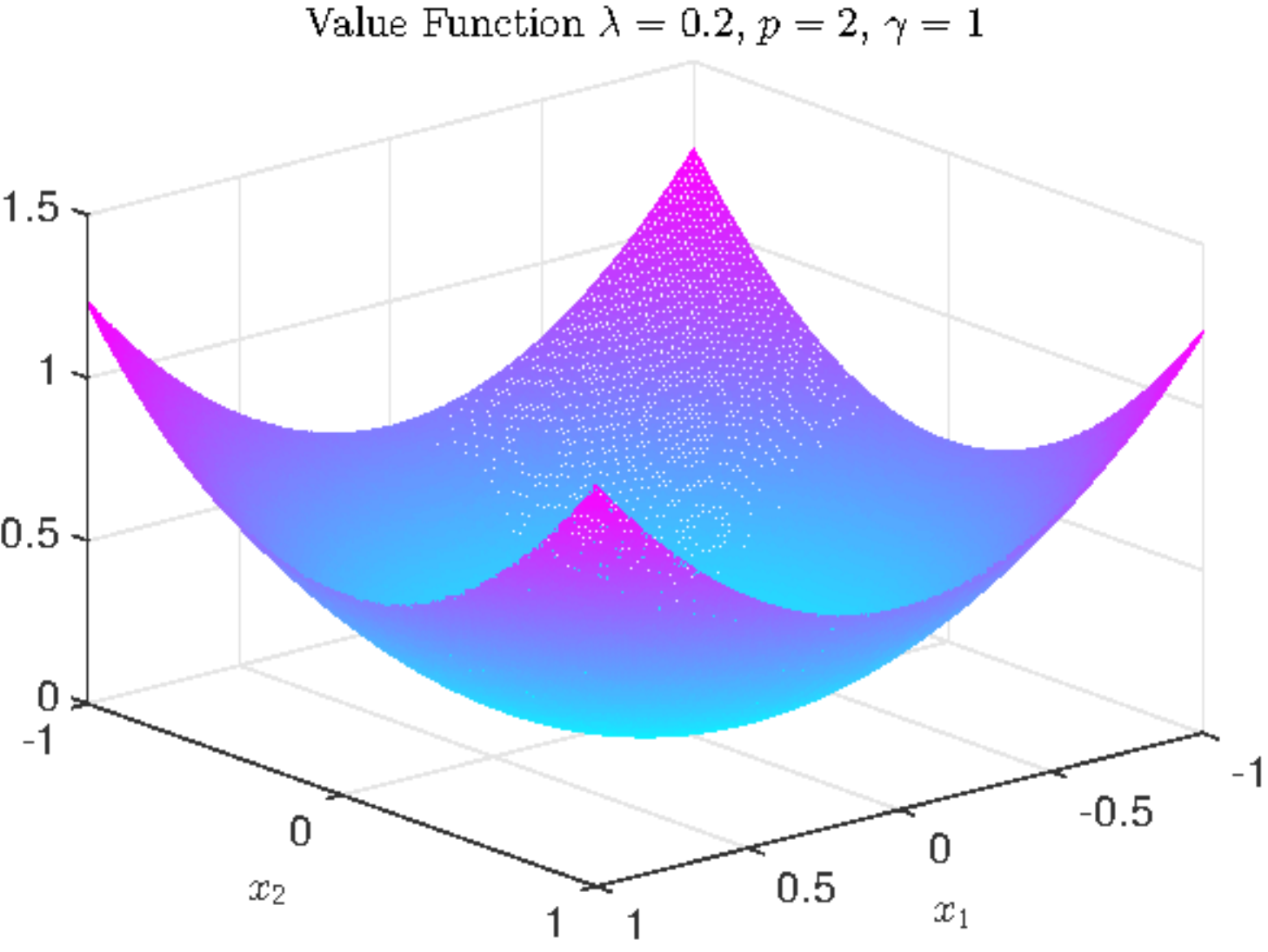}
\includegraphics[width=0.495\textwidth]{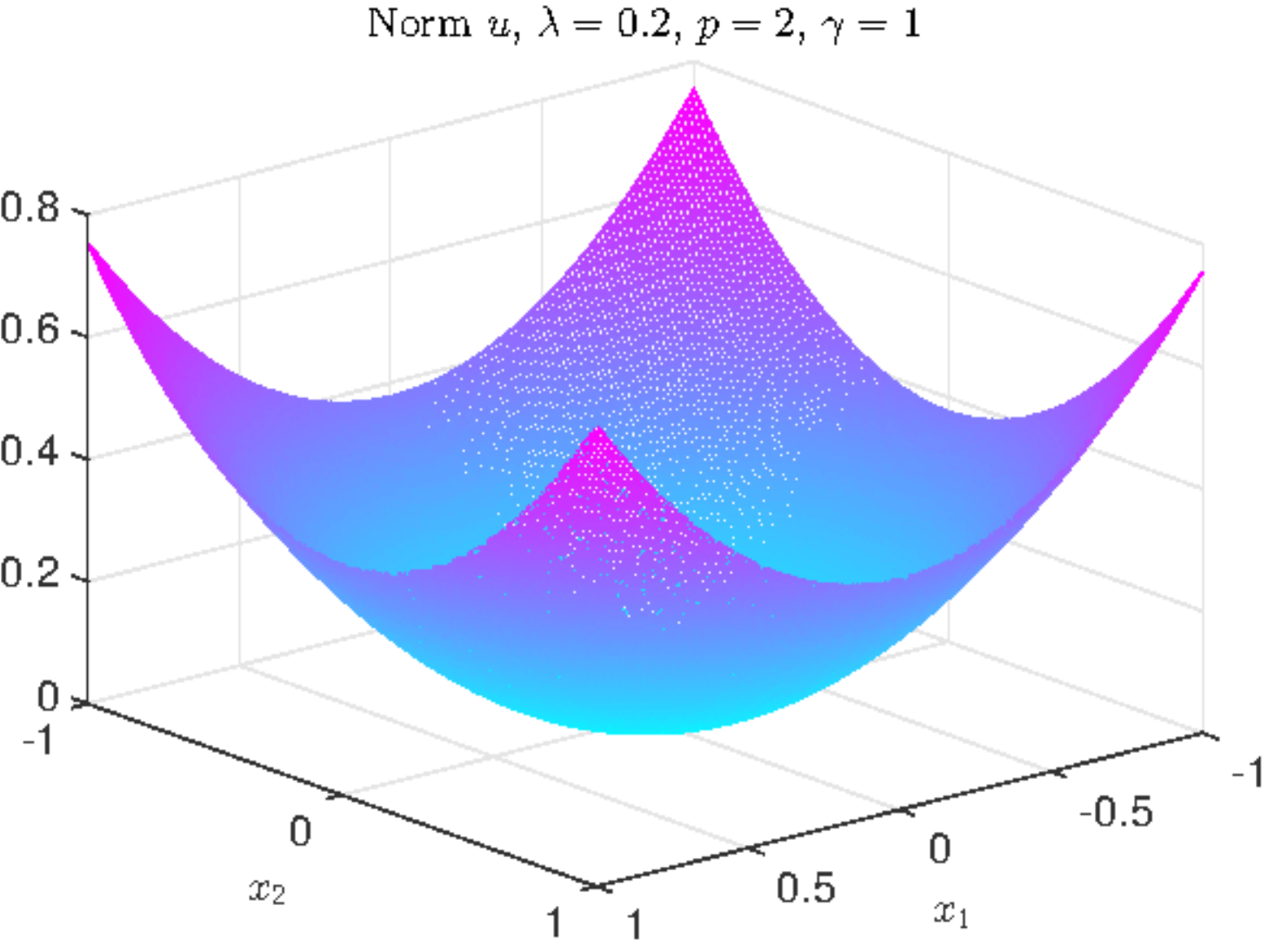}
\vskip 5mm
\includegraphics[width=0.495\textwidth]{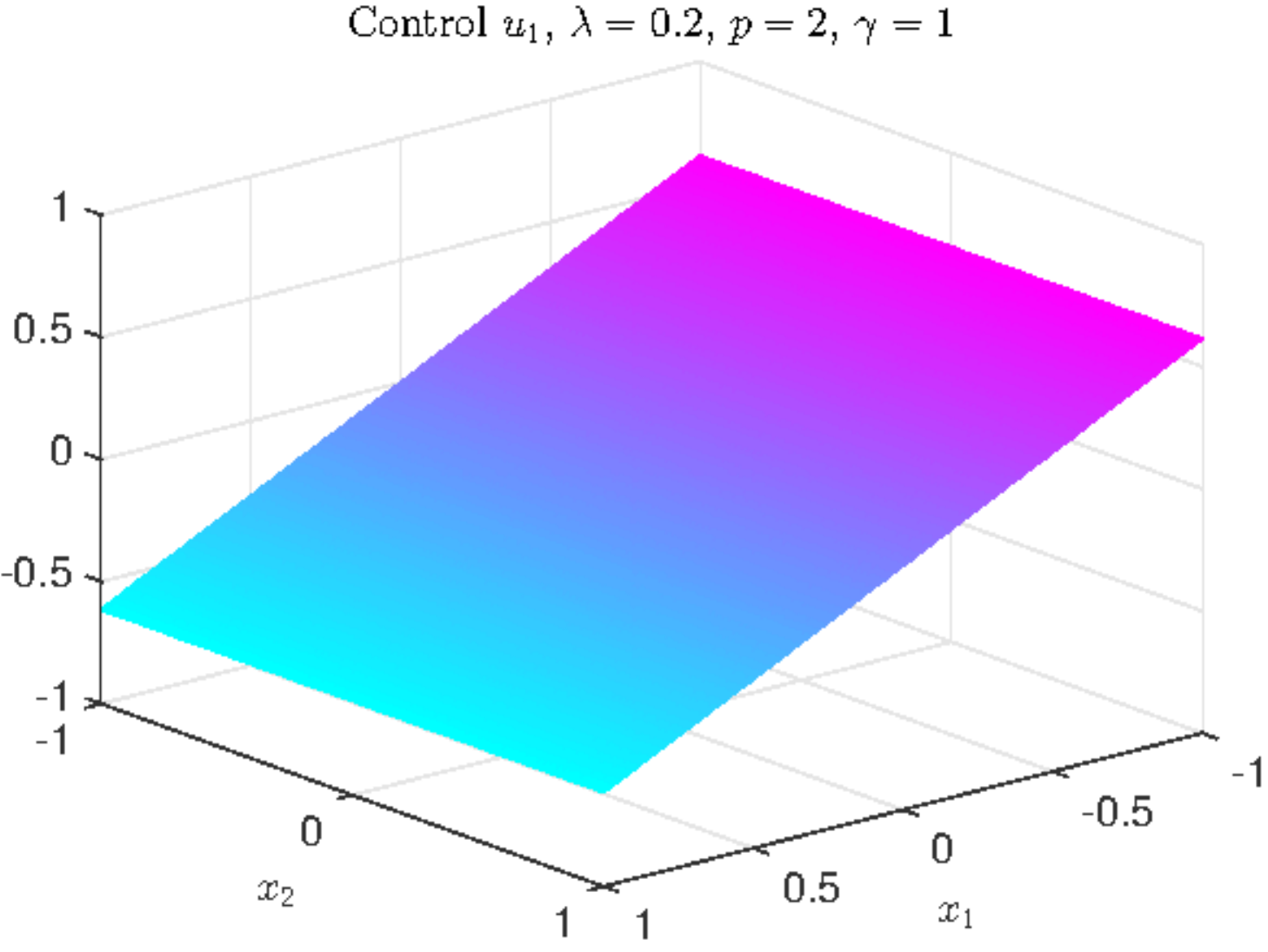}
\includegraphics[width=0.495\textwidth]{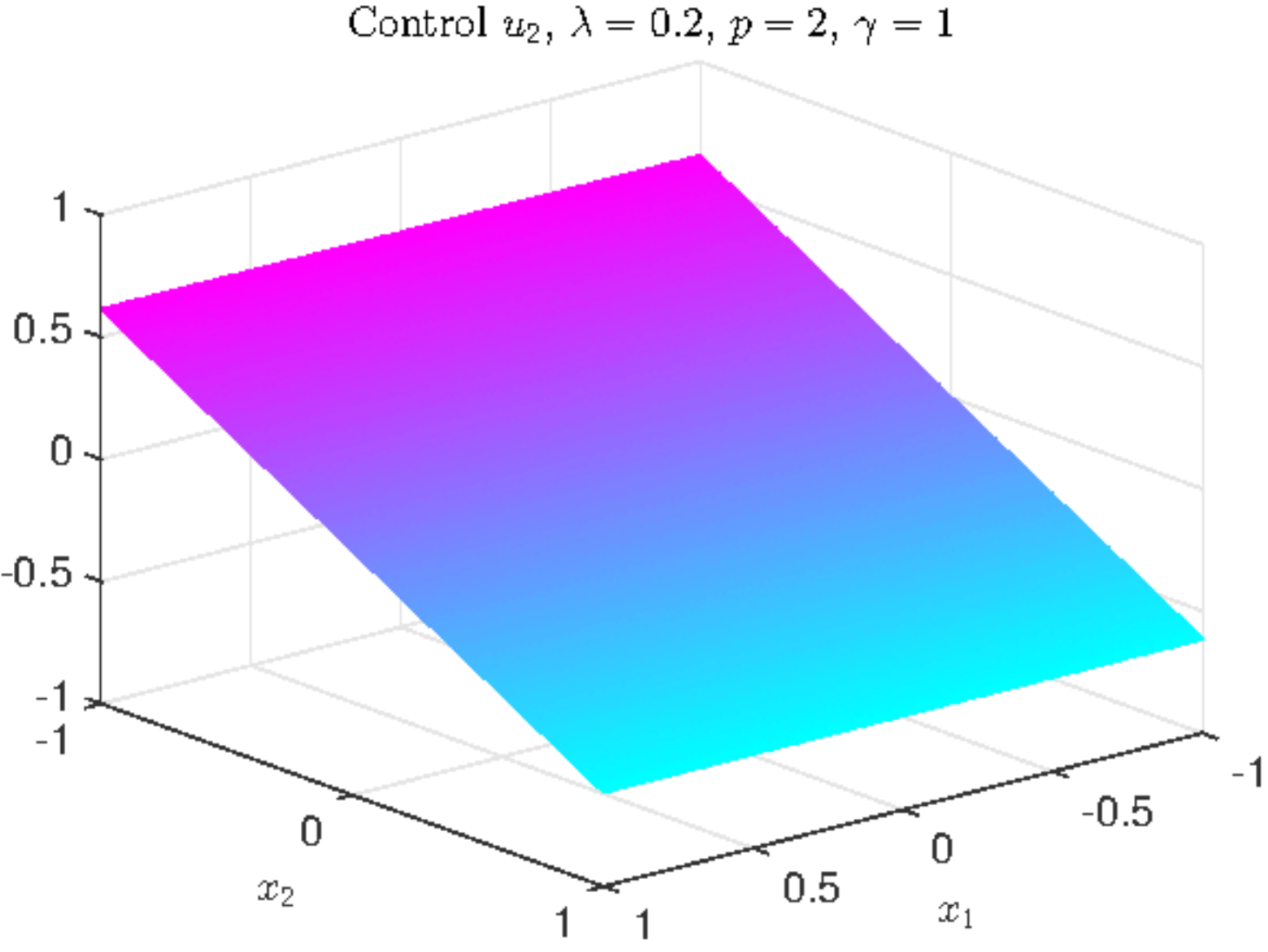}
\vskip 5mm
\centering
\includegraphics[width=0.495\textwidth]{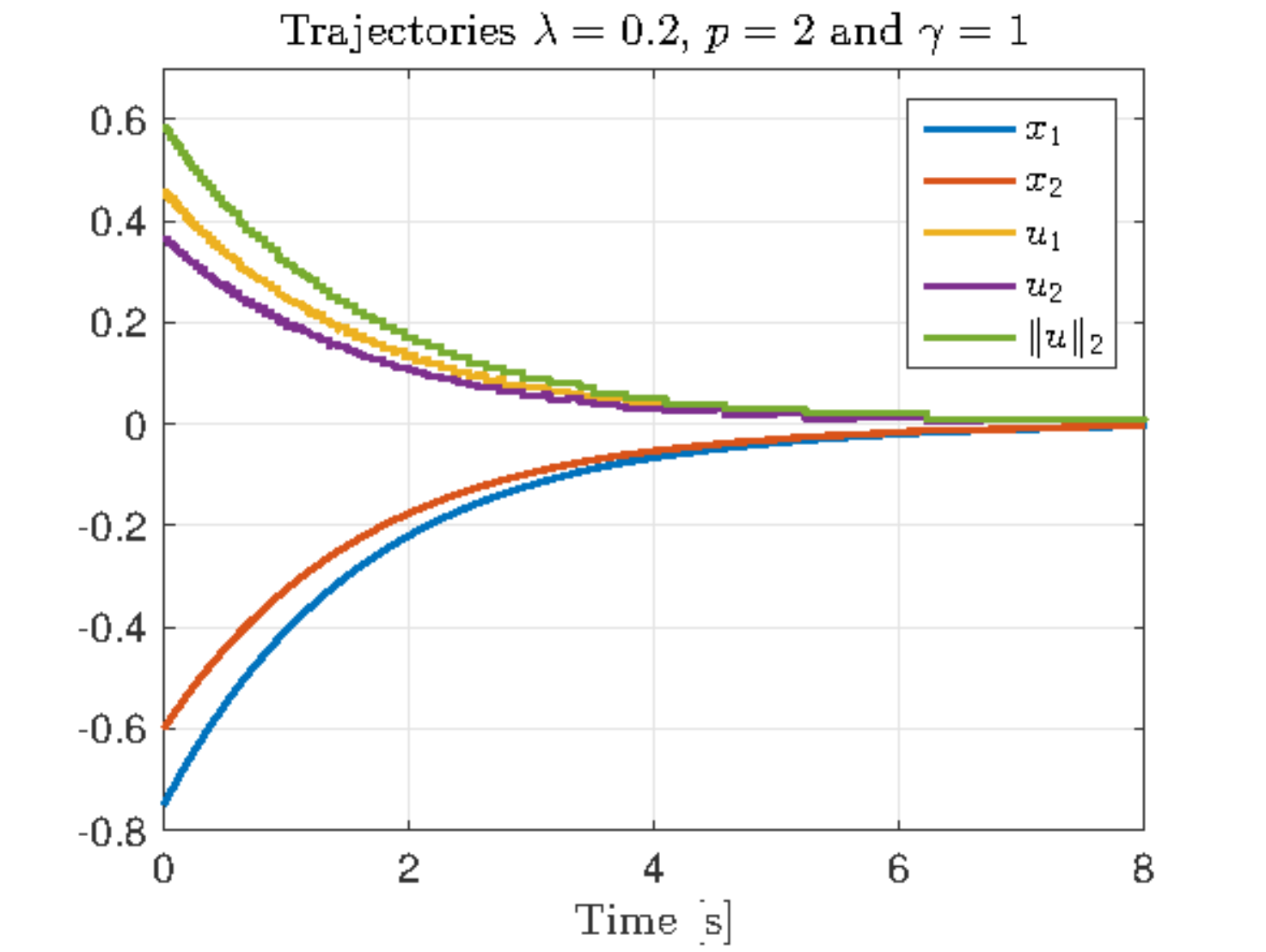}
\caption{Quadratic control problem with Eikonal dynamics and Euclidean norm contraint. Row 1 and 2: value function $V(x_1,x_2)$, $\|u\|_2$ -norm of the optimal control, optimal controls $u_1(x_1,x_2)$ and $u_2(x_1,x_2)$, over the state space  $\Omega=[-1,1]^2$. Row 3: trajectories for the initial condition $(x_1(0),x_2(0))=(-0.75,-0.6).$}\label{eikp2}
\end{figure}

As a counterpart to the quadratic control problem, we present the results related to minimum time control to the origin. Results are shown in Figure \ref{mtp}. In this case, the value function corresponds to the distance function to the origin, and the Euclidean constraint is active in the whole domain (except for the origin), leading to a bang-bang controller, as is expected for minimum time problems and linear dynamics. In contrast to the quadratic control problem, the minimum time controller generates trajectories which arrive to the origin in finite time. The optimal control trajectories exhibit some chattering due to the discrete nature of the approximate value function, however, the bang-bang behavior is respected at all times.

\begin{figure}[!h]
\includegraphics[width=0.495\textwidth]{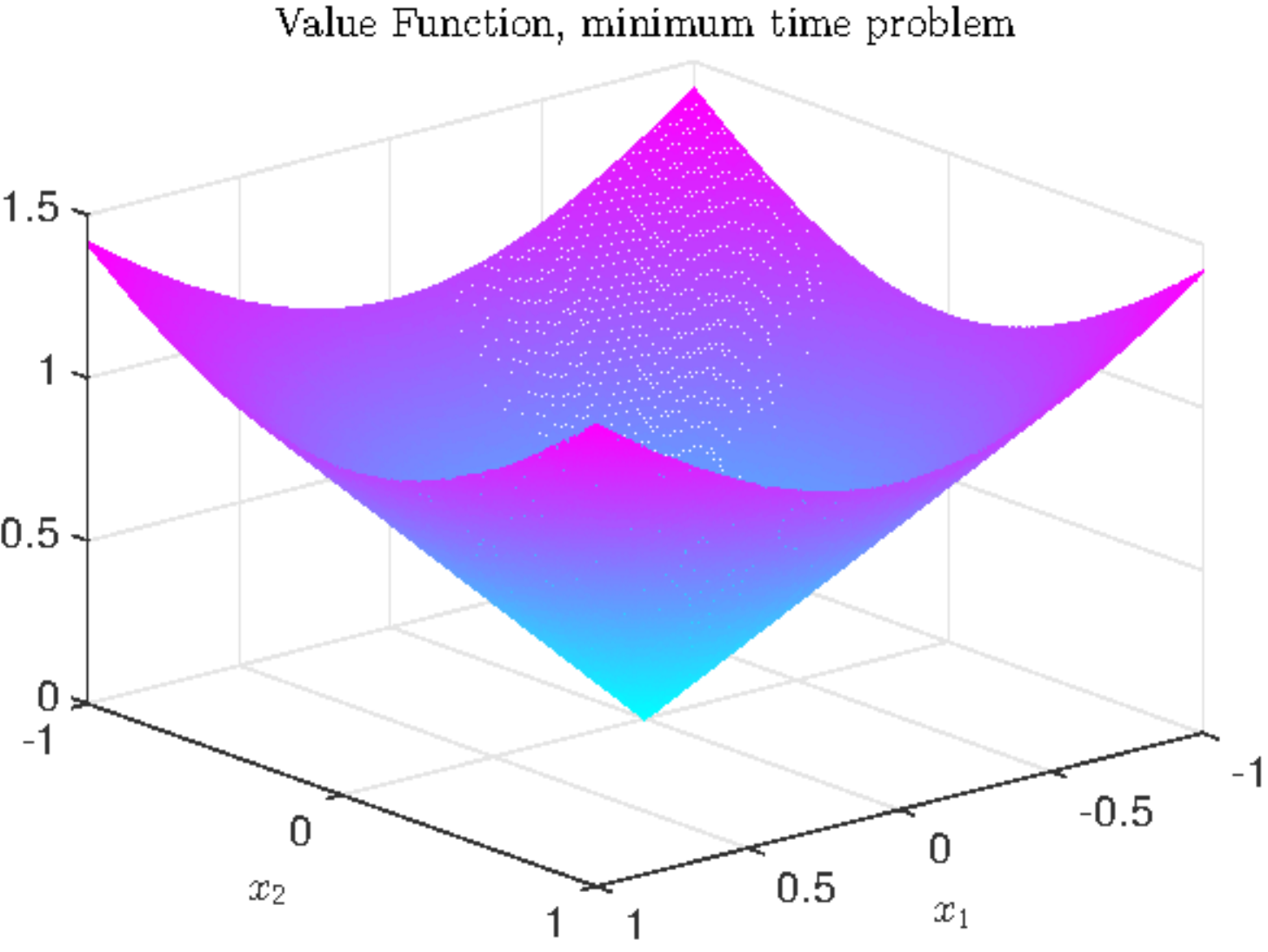}
\includegraphics[width=0.495\textwidth]{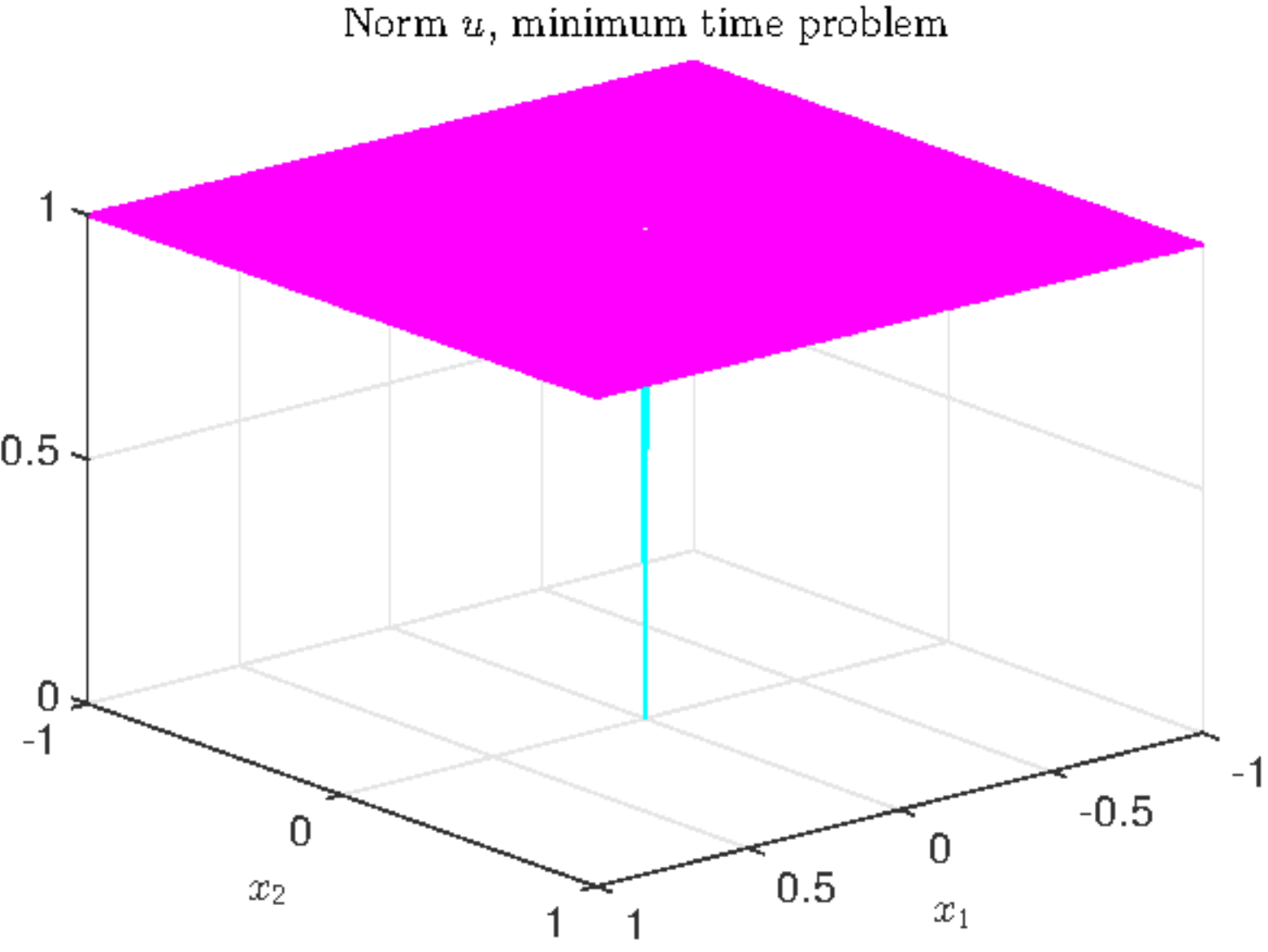}
\vskip 5mm
\includegraphics[width=0.495\textwidth]{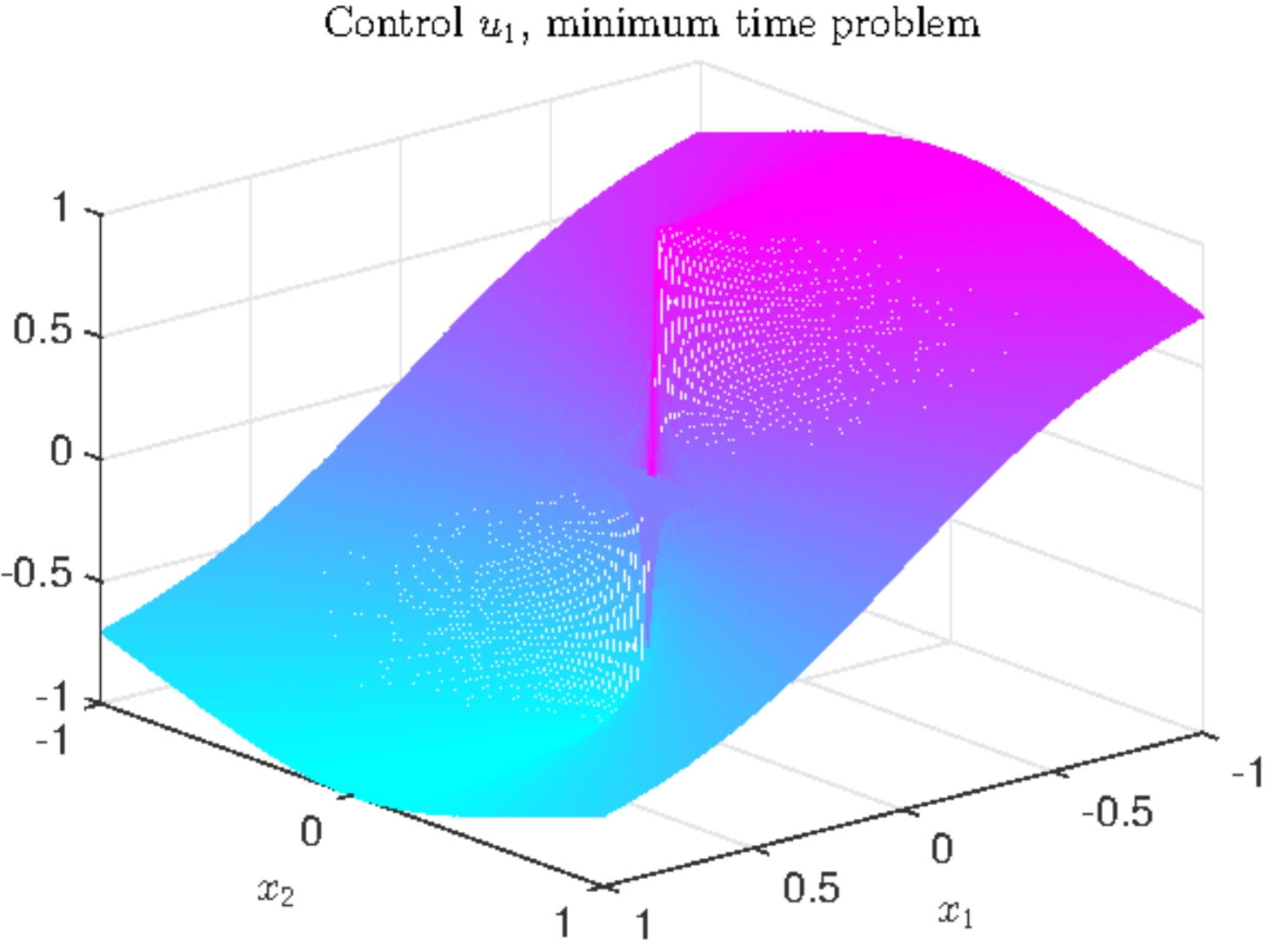}
\includegraphics[width=0.495\textwidth]{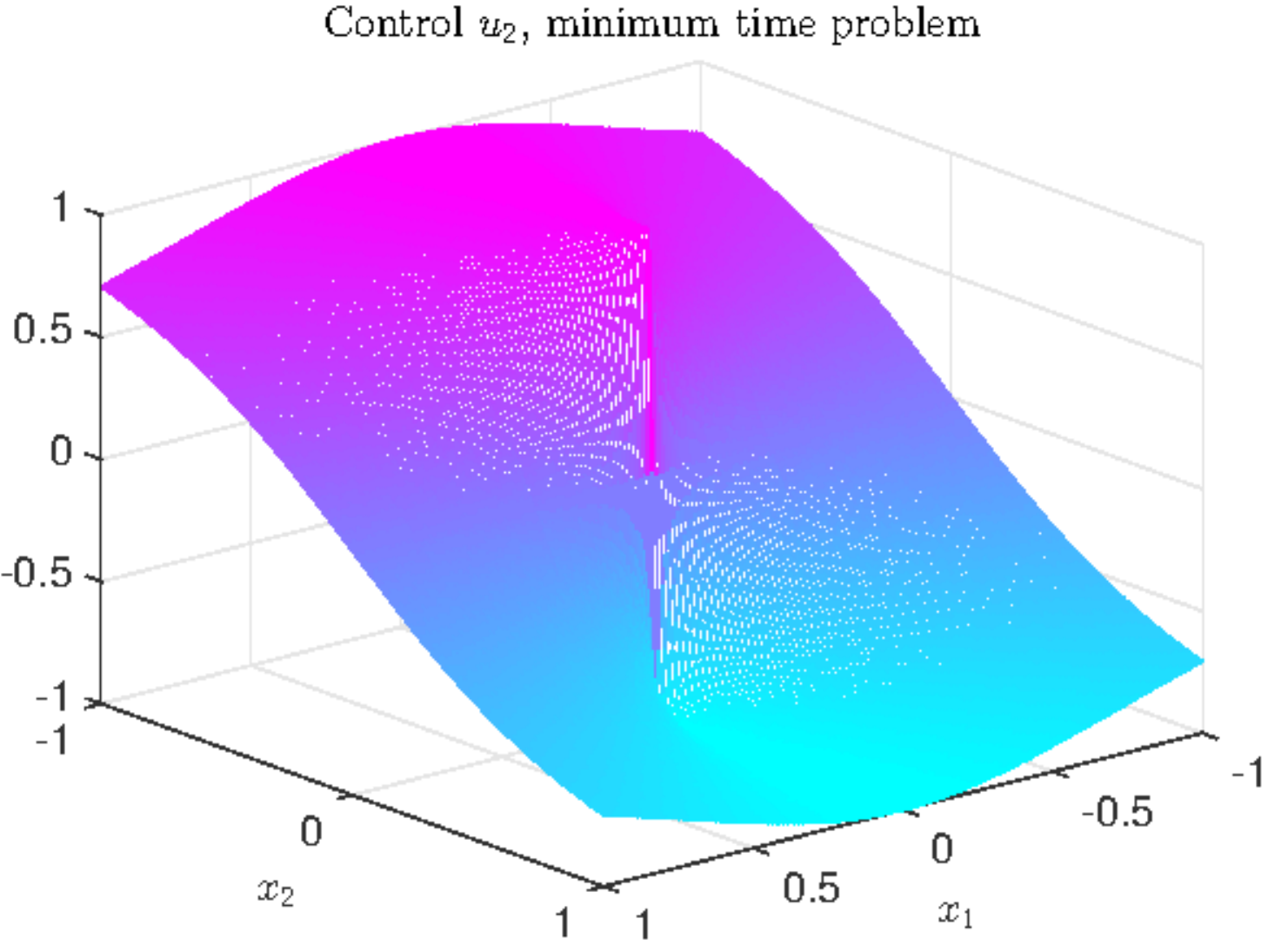}
\vskip 5mm
\centering
\includegraphics[width=0.495\textwidth]{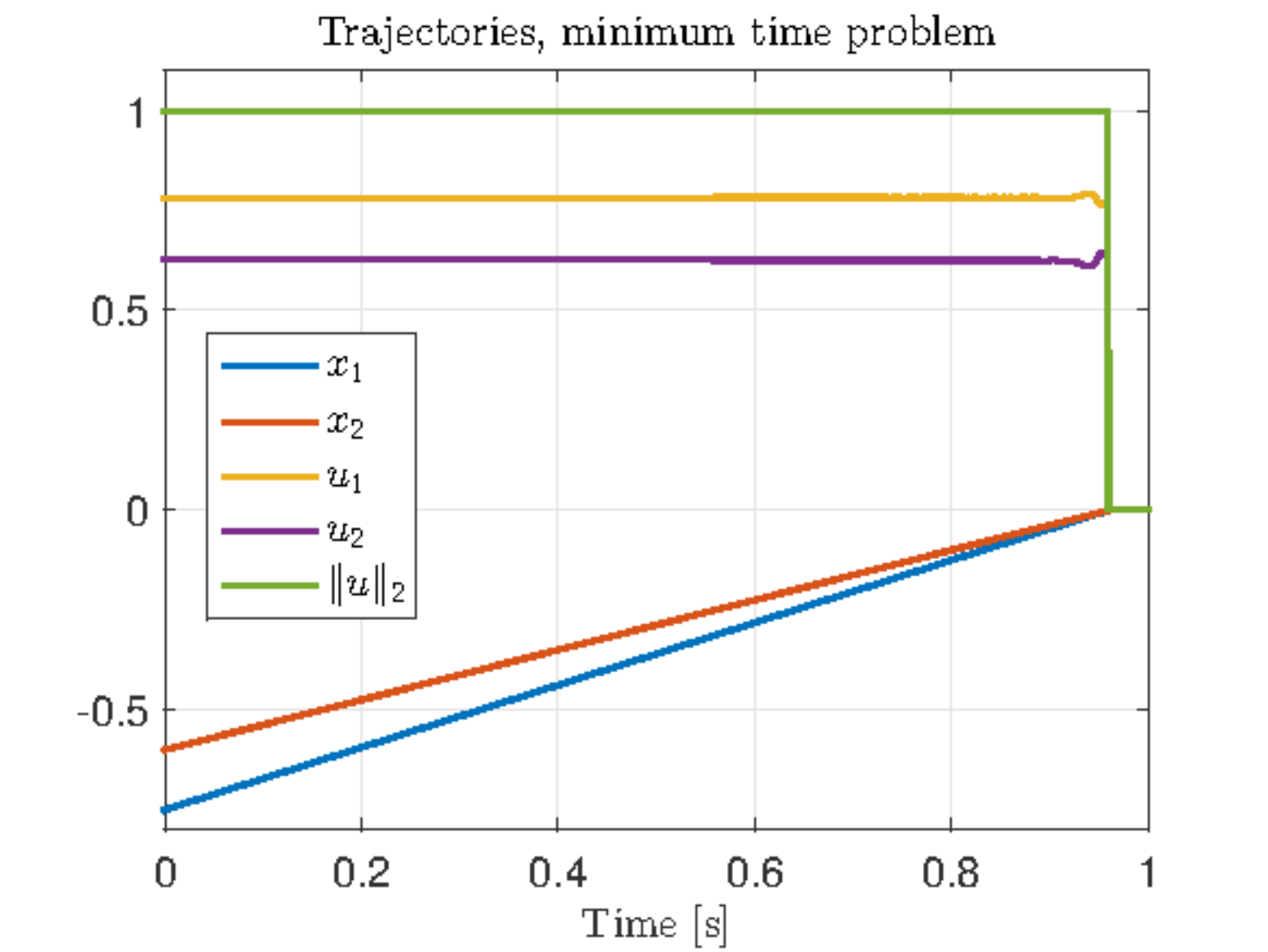}
\caption{Minimum time problem to the origin with Eikonal dynamics. Row 1 and 2: value function $V(x_1,x_2)$, $\|u\|_2$ -norm of the optimal control, optimal controls $u_1(x_1,x_2)$ and $u_2(x_1,x_2)$, over the state space  $\Omega=[-1,1]^2$. Row 3: trajectories for the initial condition $(x_1(0),x_2(0))=(-0.75,-0.6).$}\label{mtp}
\end{figure}

\subsection{Test 1: Eikonal dynamics}
In this first case related to sparse controls, we set $p=1$. Results presented in Figure \ref{eikp1}, illustrate the effect of the control penalization norm on the optimal control problem. The value function loses its quadratic nature, and more importantly, a sparsity region where the optimal control is $(0,0)$ is created around the origin. As predicted in section 6,  the width of this region is equal to $\gamma\lambda$. The optimal control is bang-bang (w.r.t. the Euclidean norm constraint), or zero (sparse), with the Euclidean norm constraint active outside of the sparsity region. We also observe two bands outside the sparsity region where one coordinate is zero while the other makes use of the total control constraint (directional sparsity).  All these features are illustrated in the trajectory plot at the end of Figure \ref{eikp1}. The structure of the optimal control confirms the results obtained in the third case of Theorem 6.3.

\noindent Comparing Figures 2 and 4 we note that in the Eikonal case for the limit $\lambda \to 0$, the optimal control of the $\ell^1$ penalized control problem converges to the optimal control of the minimum time problem. This implies a finite time arrival to the origin, which differs from the asymptotic stabilization properties that can be derived for the $\ell^2$ control penalization. Furthermore, it can be verified that this result is independent of the parameter $\gamma$.

\begin{figure}[!h]
\includegraphics[width=0.495\textwidth]{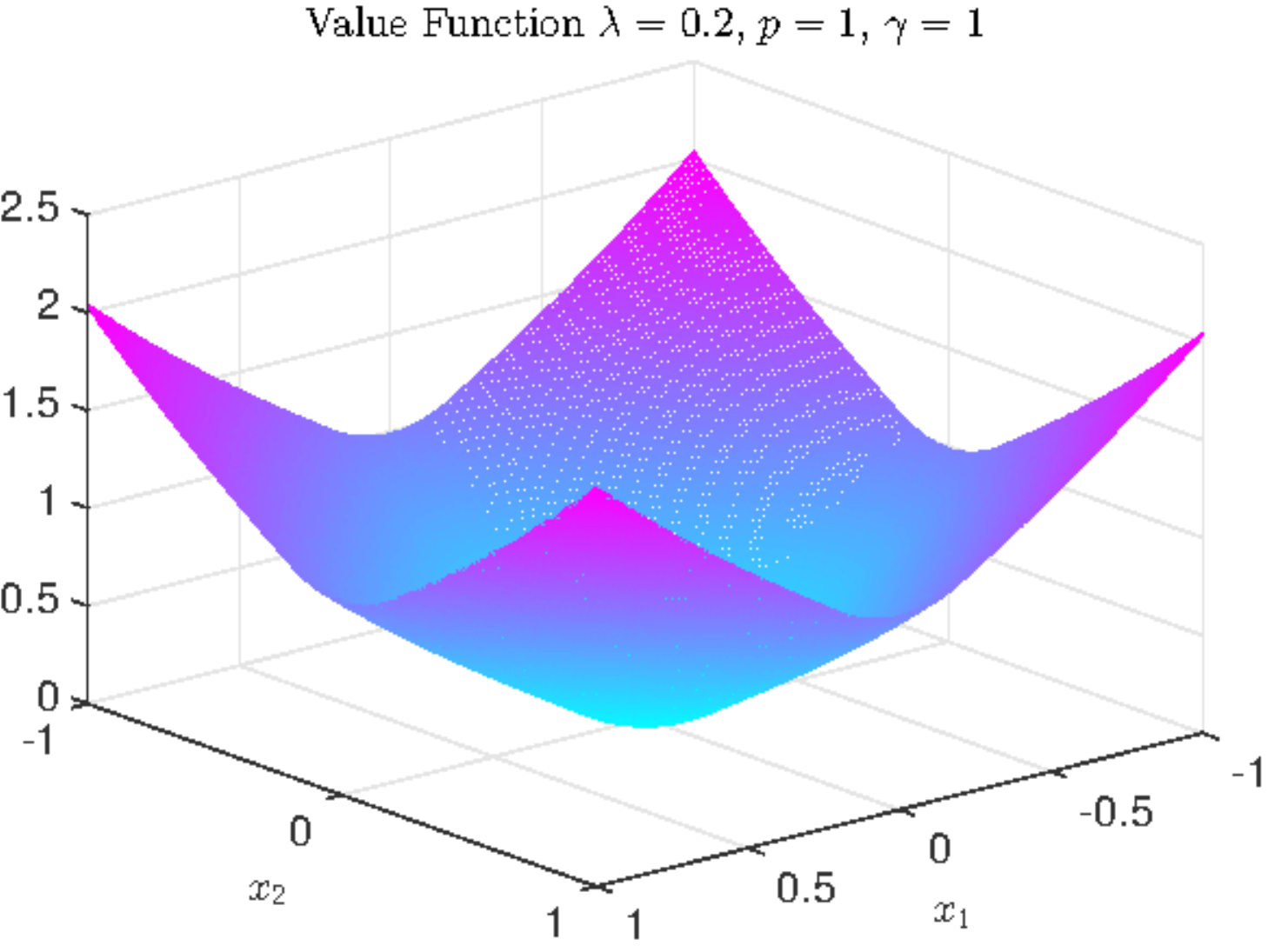}
\includegraphics[width=0.495\textwidth]{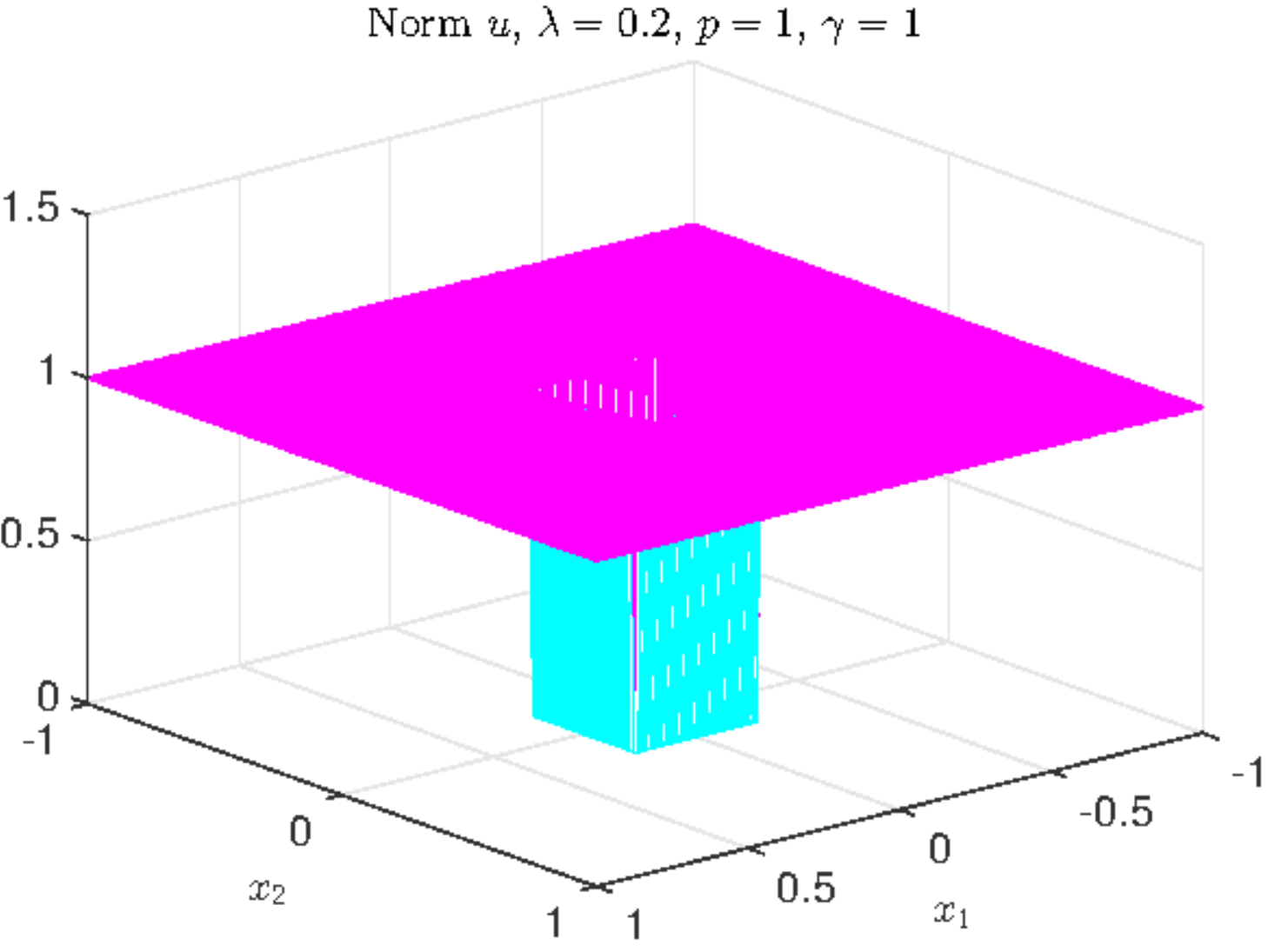}
\vskip 5mm
\includegraphics[width=0.495\textwidth]{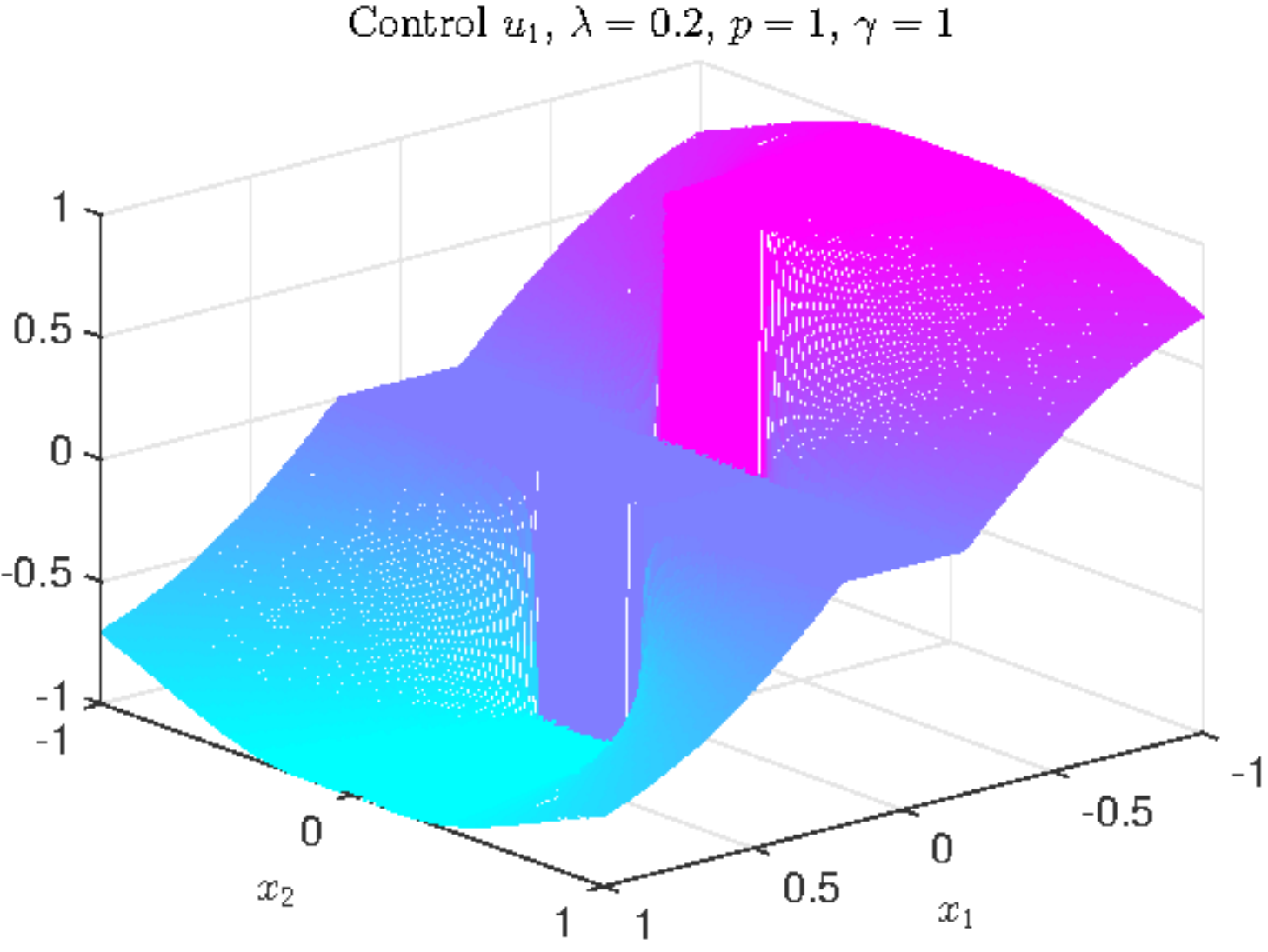}
\includegraphics[width=0.495\textwidth]{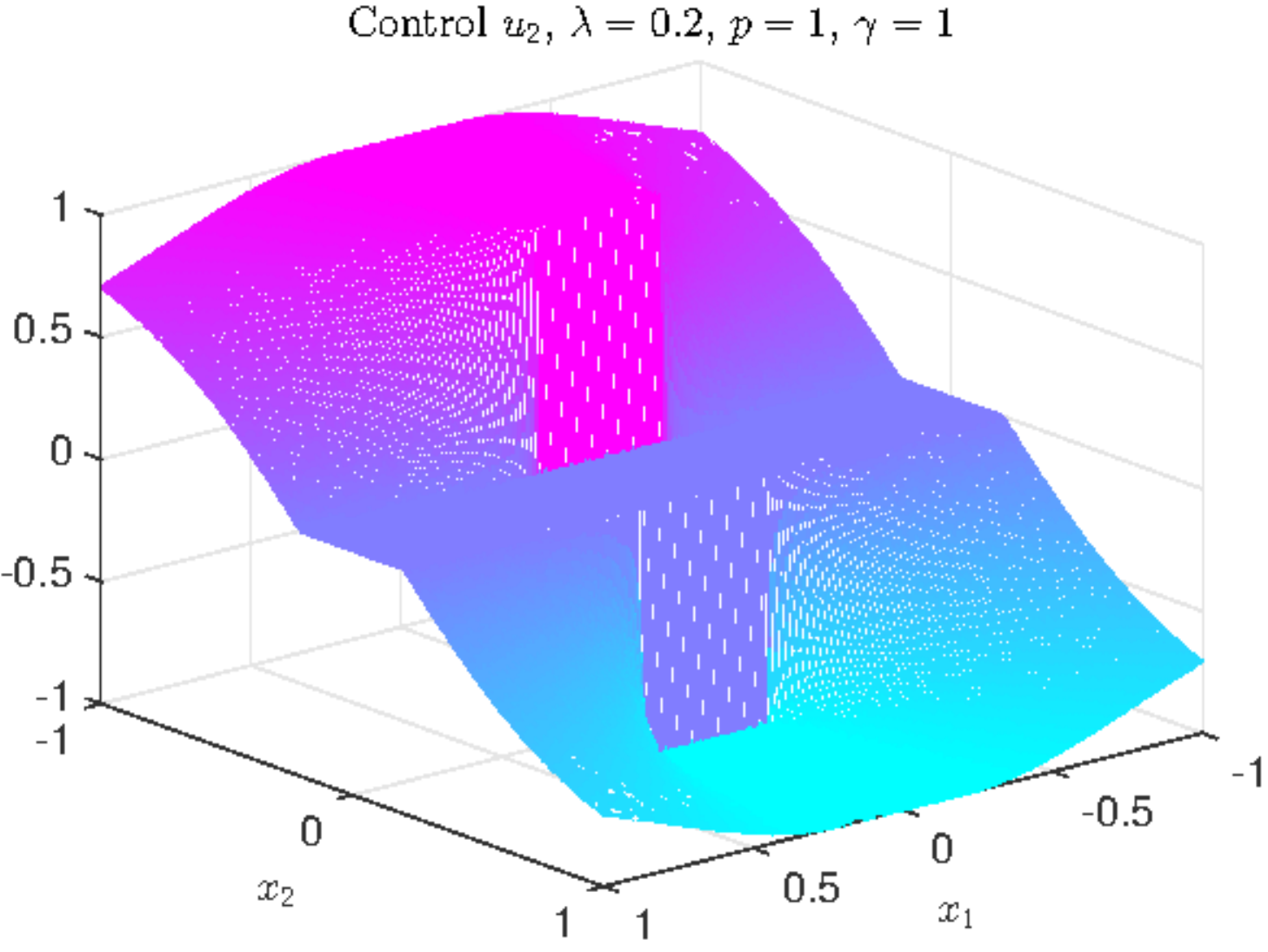}
\vskip 5mm
\centering
\includegraphics[width=0.495\textwidth]{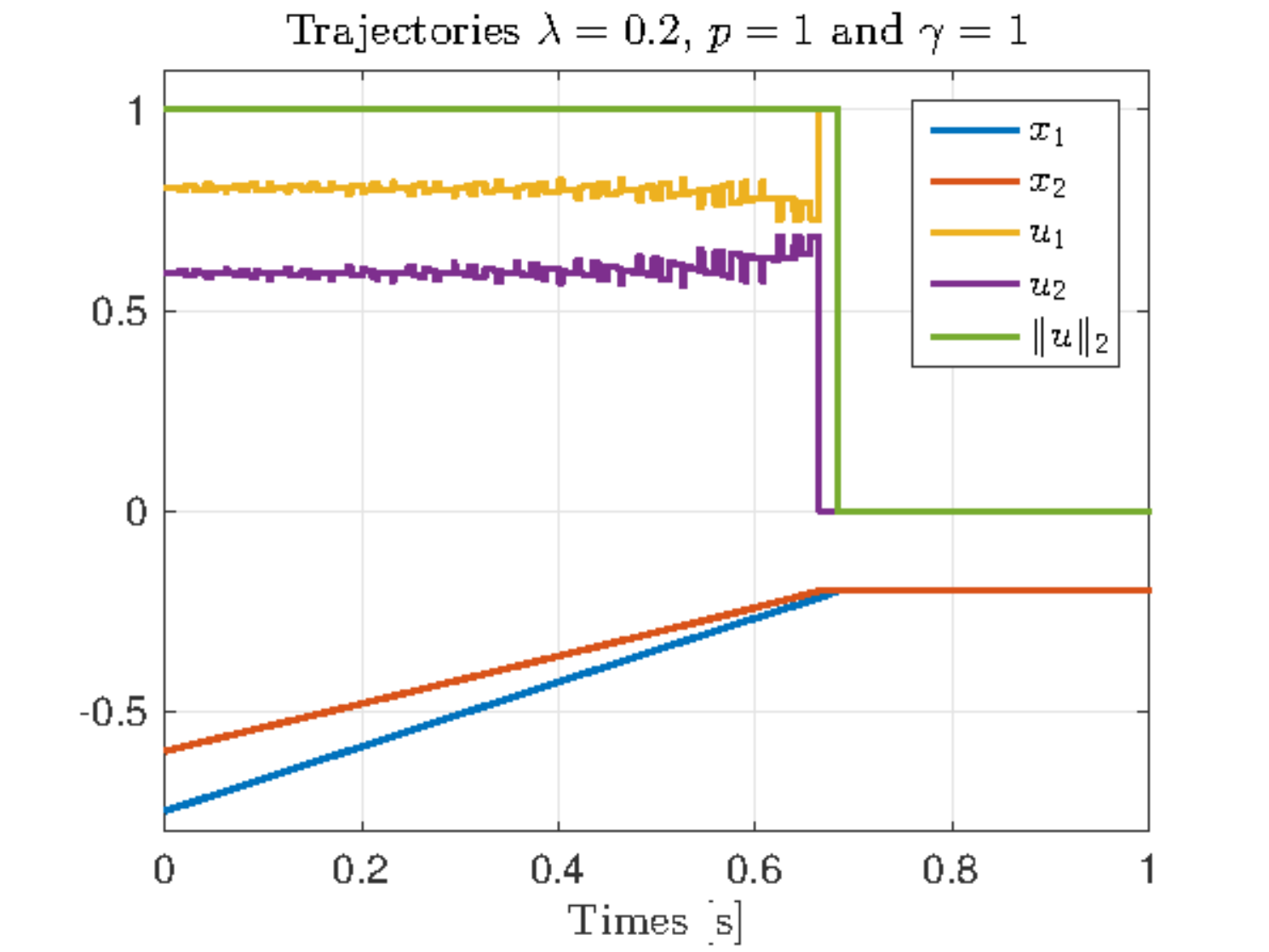}
\caption{Inifinite horizon control of the Eikonal dynamics with $p=1$. Row 1 and 2: value function $V(x_1,x_2)$, $\|u\|_2$ -norm of the optimal control, optimal controls $u_1(x_1,x_2)$ and $u_2(x_1,x_2)$, over the state space  $\Omega=[-1,1]^2$. Row 3: trajectories for the initial condition $(x_1(0),x_2(0))=(-0.75,-0.6).$}\label{eikp1}
\end{figure}

\begin{figure}[!h]
\includegraphics[width=0.495\textwidth]{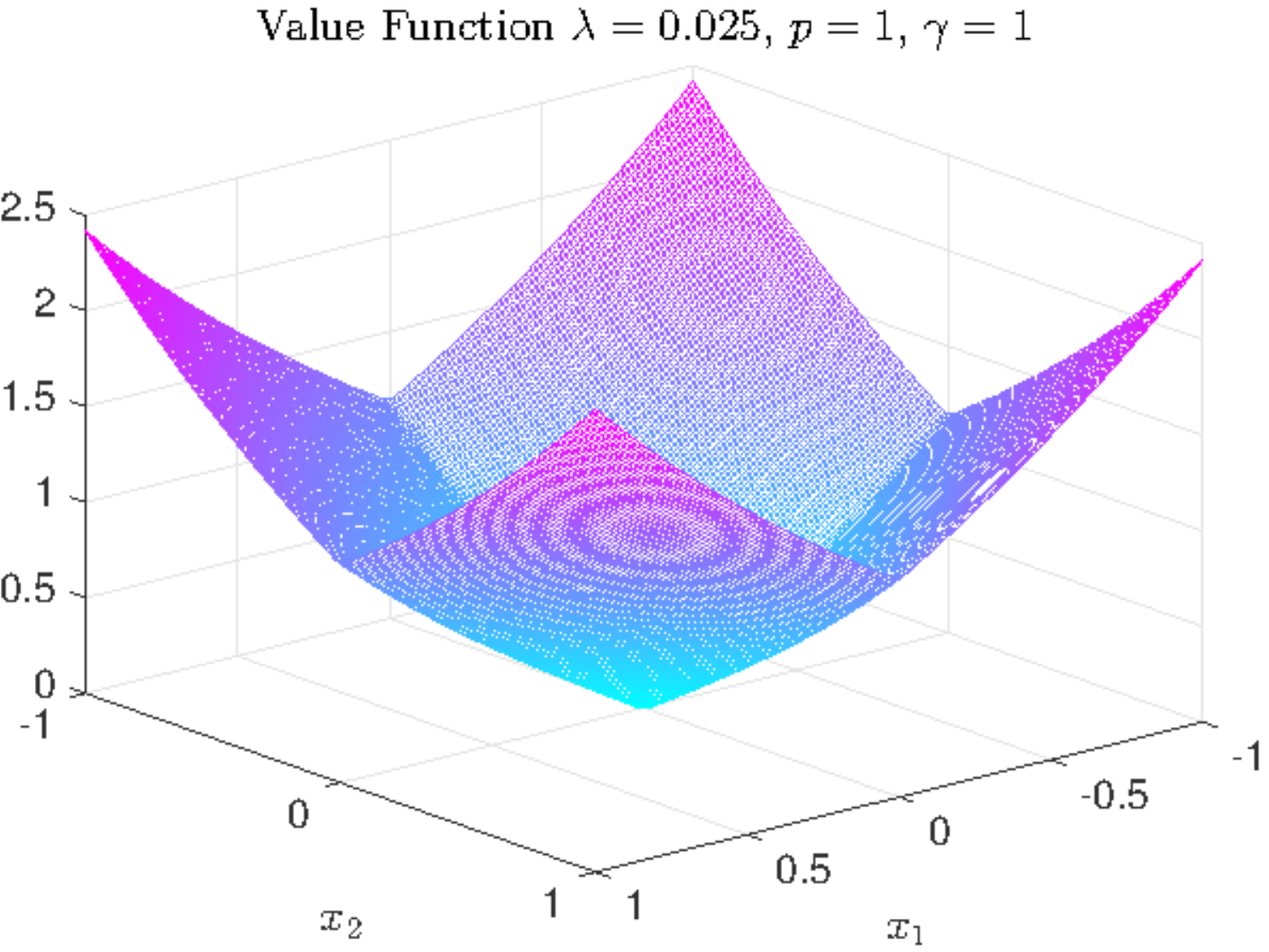}
\includegraphics[width=0.495\textwidth]{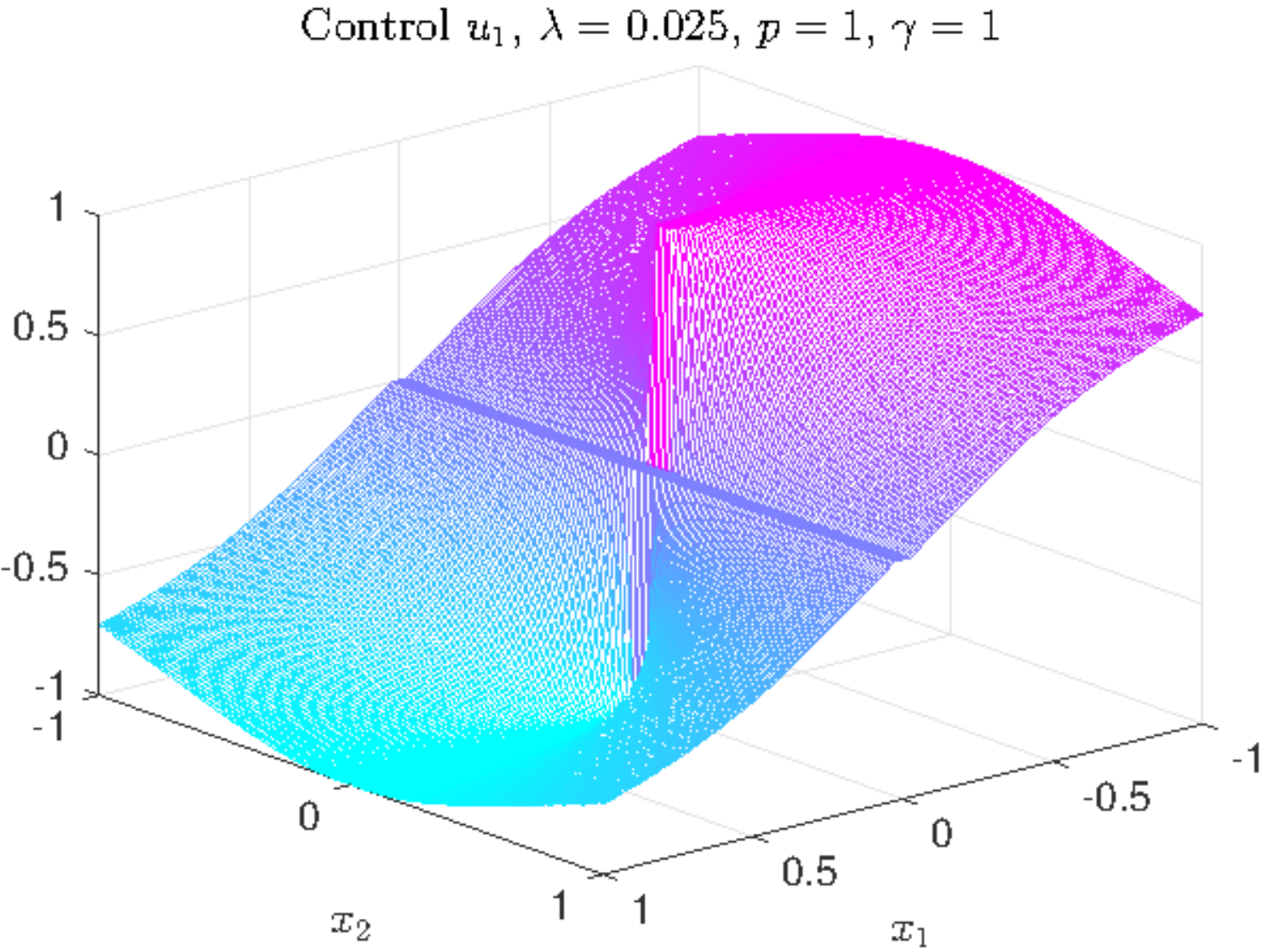}
\caption{Infinite horizon control of the Eikonal dynamics with $p=1$, $\gamma=1$, and $\lambda=0.025$. Value function (left) and optimal control $u_1$ (right). Reducing the value of $\lambda$ shrinks the sparsity region, and the optimal control converges to the minimum time optimal control depicted in Figure \ref{mtp}.}\label{eikp1small}
\end{figure}

\noindent A more dramatic change in the control structure is shown in Figure \ref{eikp05}, where results are presented for the case $p=0.5$. The control now becomes not only sparse, but also a bang-bang switching controller, i.e. a control which is  either active with full energy in only one coordinate at a time, or completely off. The sparsity region where the control is fully inactive remains the same as in the $p=1$ case (we conjecture that as in the sparse case, it only depends on the product $\lambda\gamma$). Outside this region, the optimal control belongs to the set of directions $U^*=\{(1,0),(0,1),(-1,0),(0,-1)\}$. These results are further illustrated in the last row of Figure \ref{eikp05}, where switching controllers lead to a zigzag optimal trajectory which stops once the sparsity region has been reached.

\begin{figure}[!h]
\includegraphics[width=0.495\textwidth]{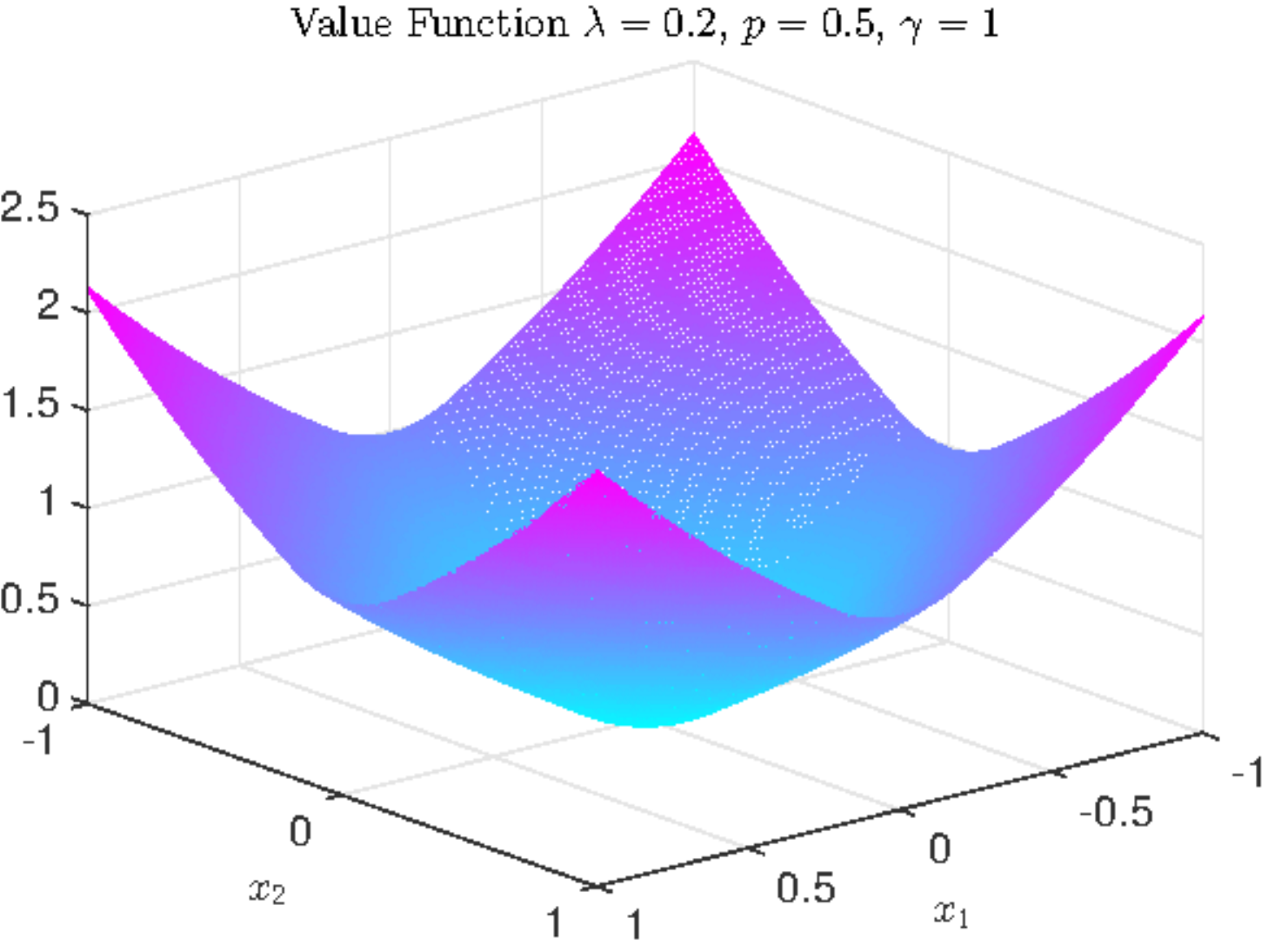}
\includegraphics[width=0.495\textwidth]{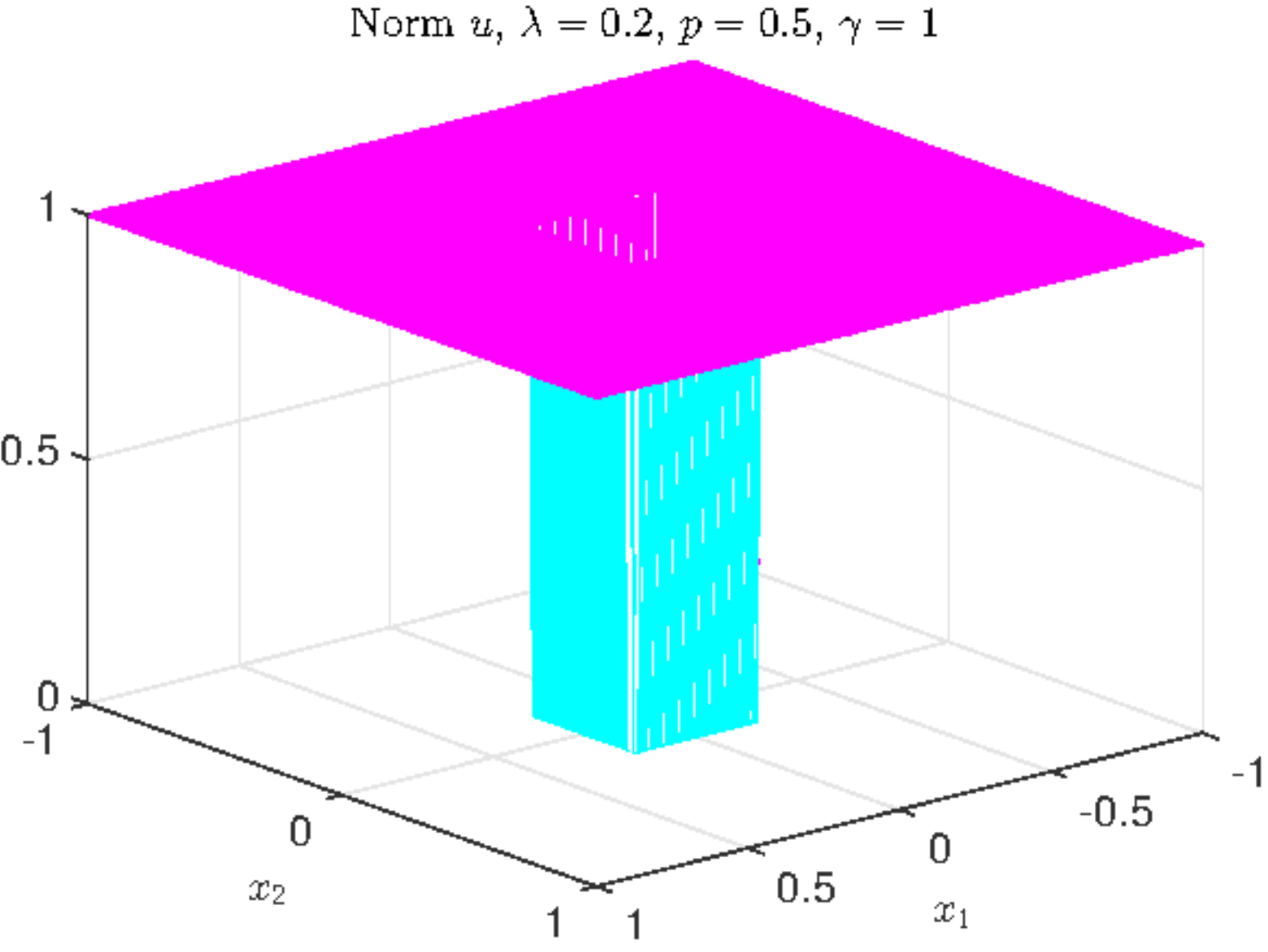}
\vskip 5mm
\includegraphics[width=0.495\textwidth]{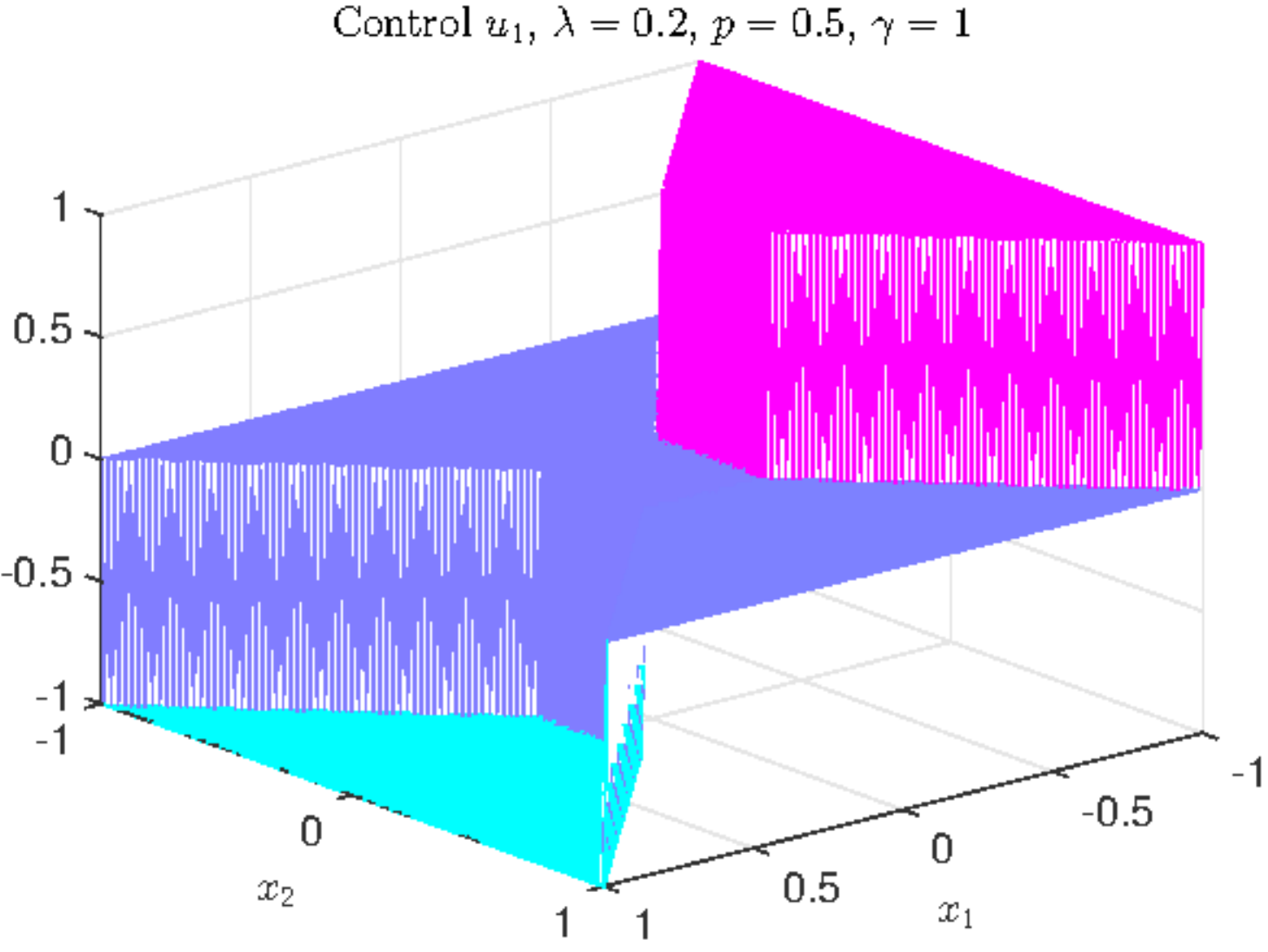}
\includegraphics[width=0.495\textwidth]{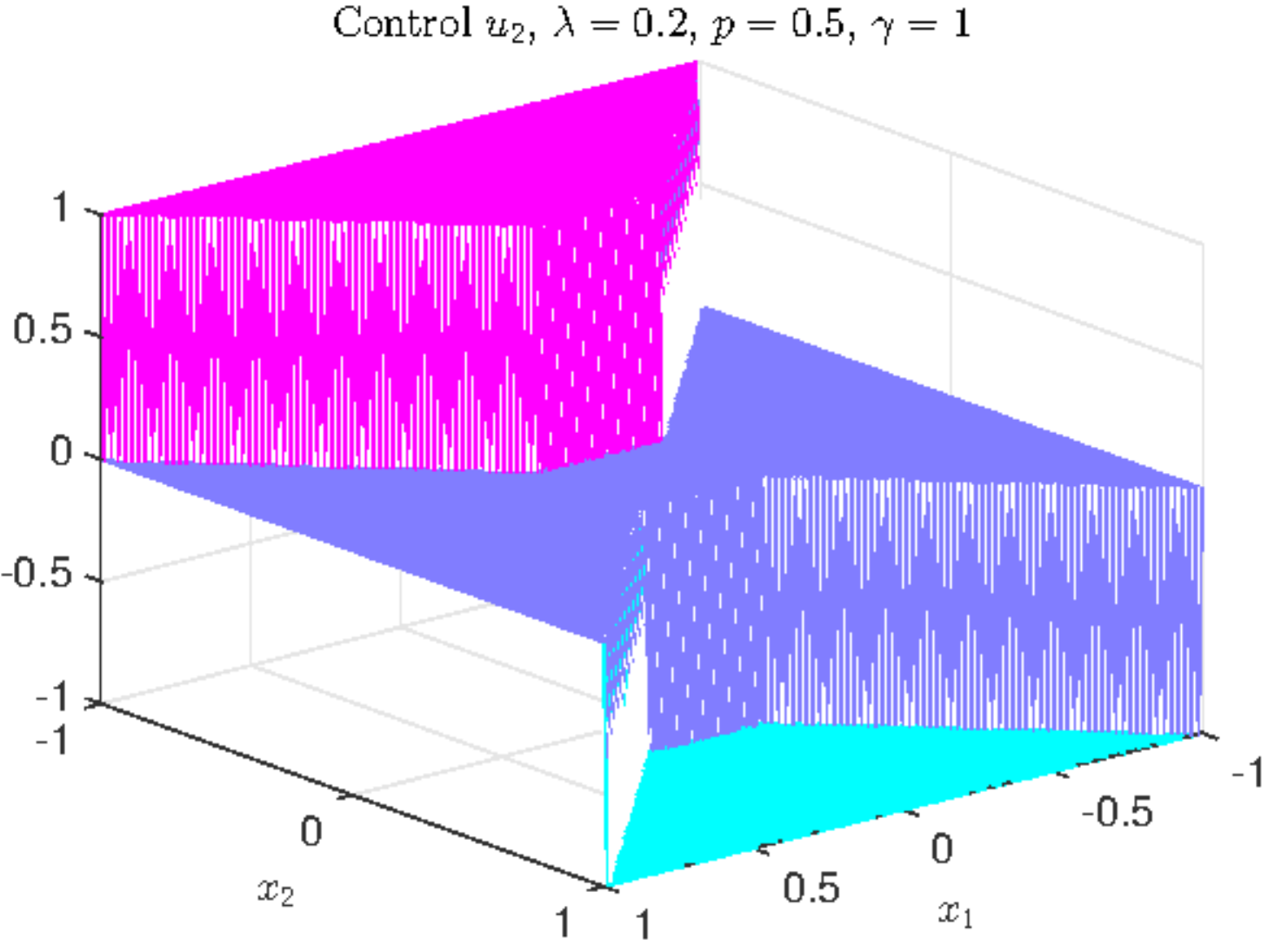}
\vskip 5mm
\includegraphics[width=0.495\textwidth]{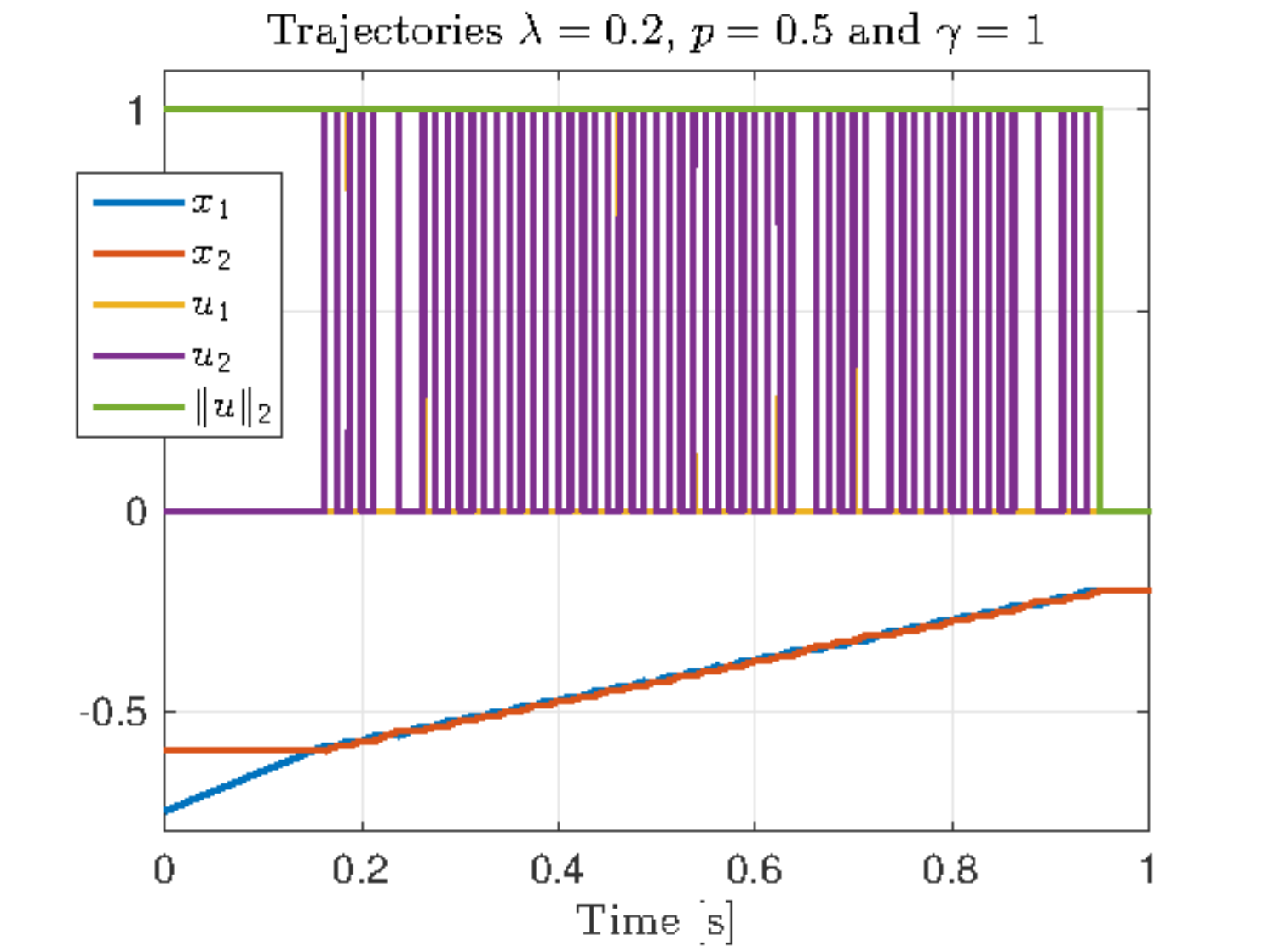}
\includegraphics[width=0.495\textwidth]{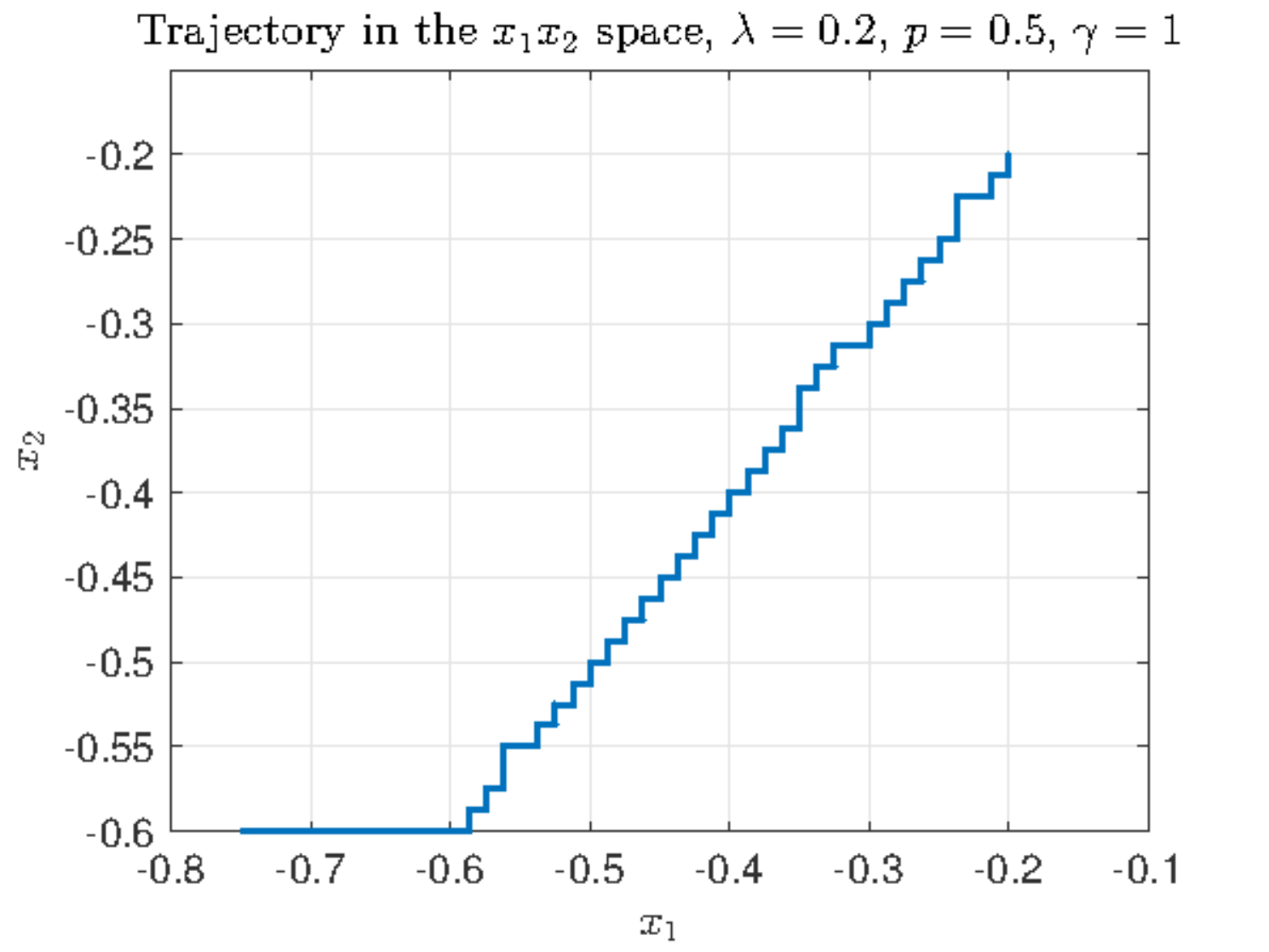}
\caption{Infinite horizon control of the Eikonal dynamics with $p=0.5$. Row 1 and 2: value function $V(x_1,x_2)$, $\|u\|_2$ -norm of the optimal control, optimal controls $u_1(x_1,x_2)$ and $u_2(x_1,x_2)$, over the state space  $\Omega=[-1,1]^2$. Row 3: trajectories for the initial condition $(x_1(0),x_2(0))=(-0.75,-0.6).$}\label{eikp05}
\end{figure}

\subsection{Test 2: nonlinear dynamics.}
One of the advantages of the numerical approximation of optimal controllers via the dynamic programming approach is the possibility of dealing with nonlinear dynamics at no additional effort. We perform similar tests as in the Eikonal case with nonlinear dynamics of the form
\begin{align*}
\dot x_1(s)&=x_1(s)(x_1(s)-q_1)+u_1(s)\\
\dot x_2(s)&=x_2(s)(x_2(s)-q_2)+u_2(s)\,,
\end{align*}
with $p_1,p_2>0$. In this case we set $q_1=0.6$ and $q_2=0.4$. The origin is locally asymptotically stable for $(x_1,x_2)\in (-q_1,q_1)\times (-q_2,q_2)$. We discuss the cases $p=2,1,0.5$.  As illustrated in Figure \ref{nonlinp2}, the $L^2$ cost functional leads to smooth trajectories tending to $0$ with both controls different from zero at all times. For the case $p=1$, shown in Figure \ref{nonlinp1}, the optimal controls are sparsified and bang-bang. Furthermore, once the state is sufficiently close to the origin, the control shuts off completely, and the free dynamics leads asymptotically to the origin. Finally, when $p=0.5$ as in \ref{nonlinp05}, the optimal control is further sparsified and becomes bang-bang and switching over a large portion of the state space. Compared to $p=1$, the optimal controls shut down to zero later when $p=0.5$.

\begin{figure}[!h]
\includegraphics[width=0.495\textwidth]{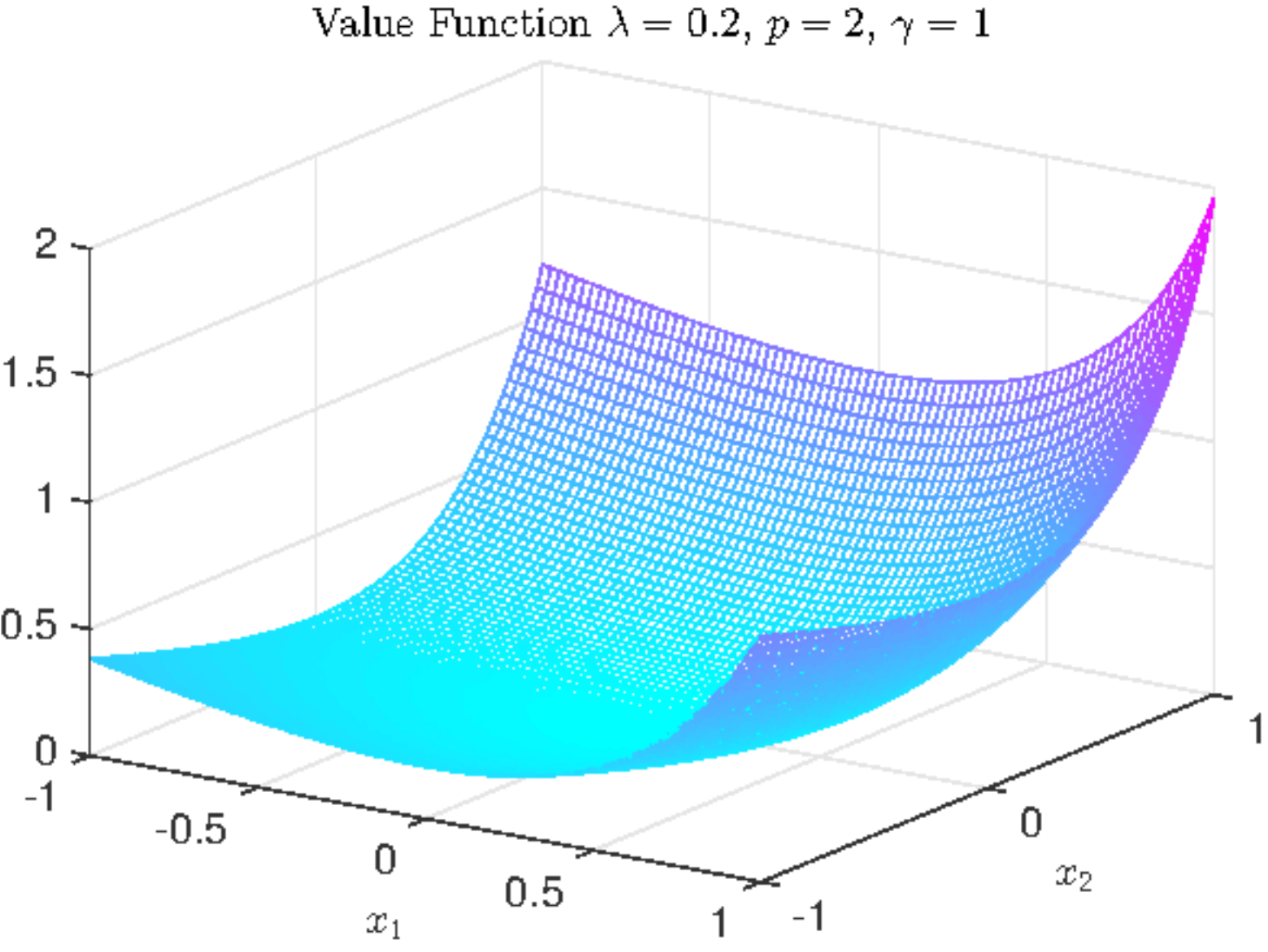}
\includegraphics[width=0.495\textwidth]{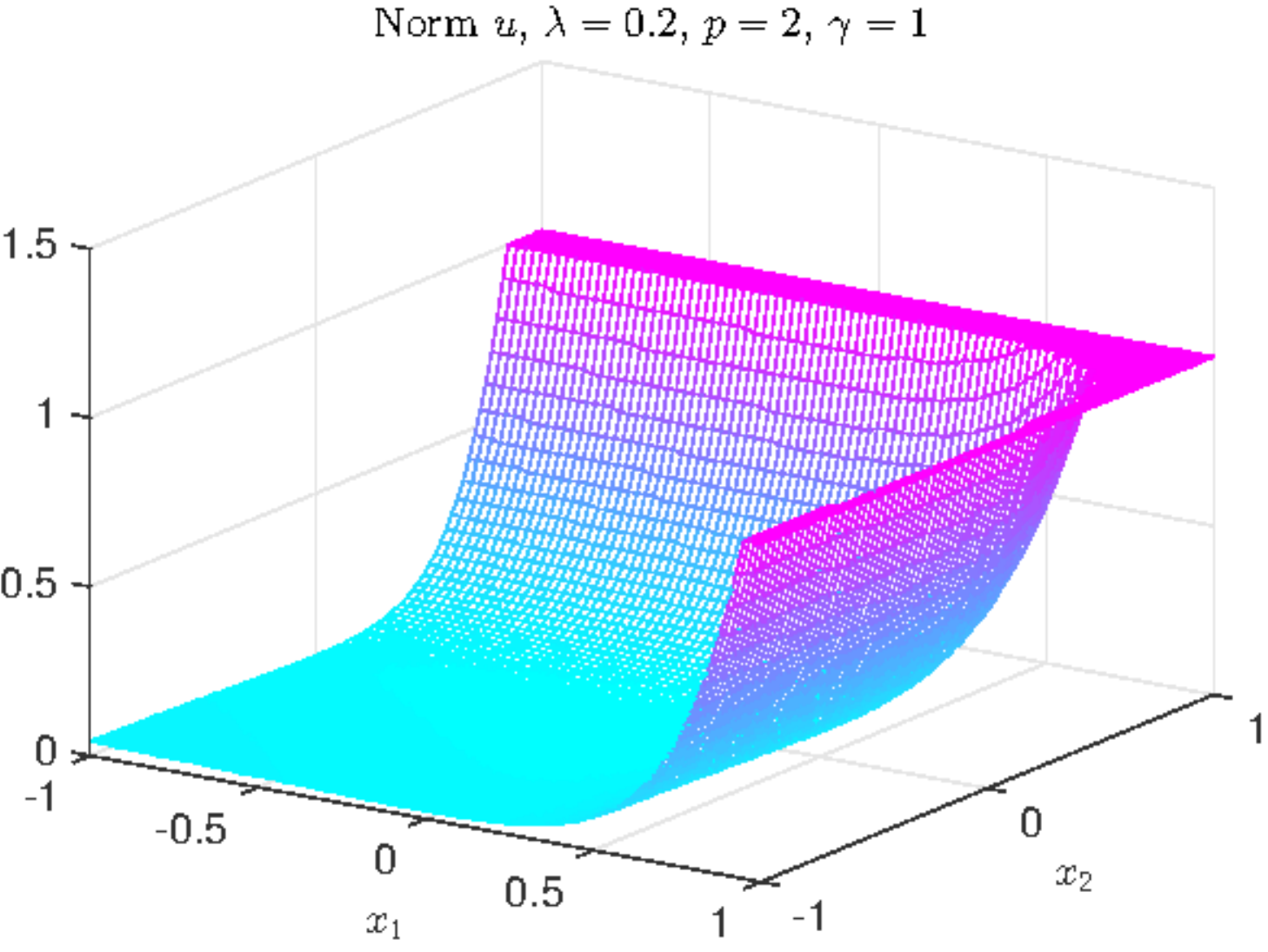}
\vskip 5mm
\includegraphics[width=0.495\textwidth]{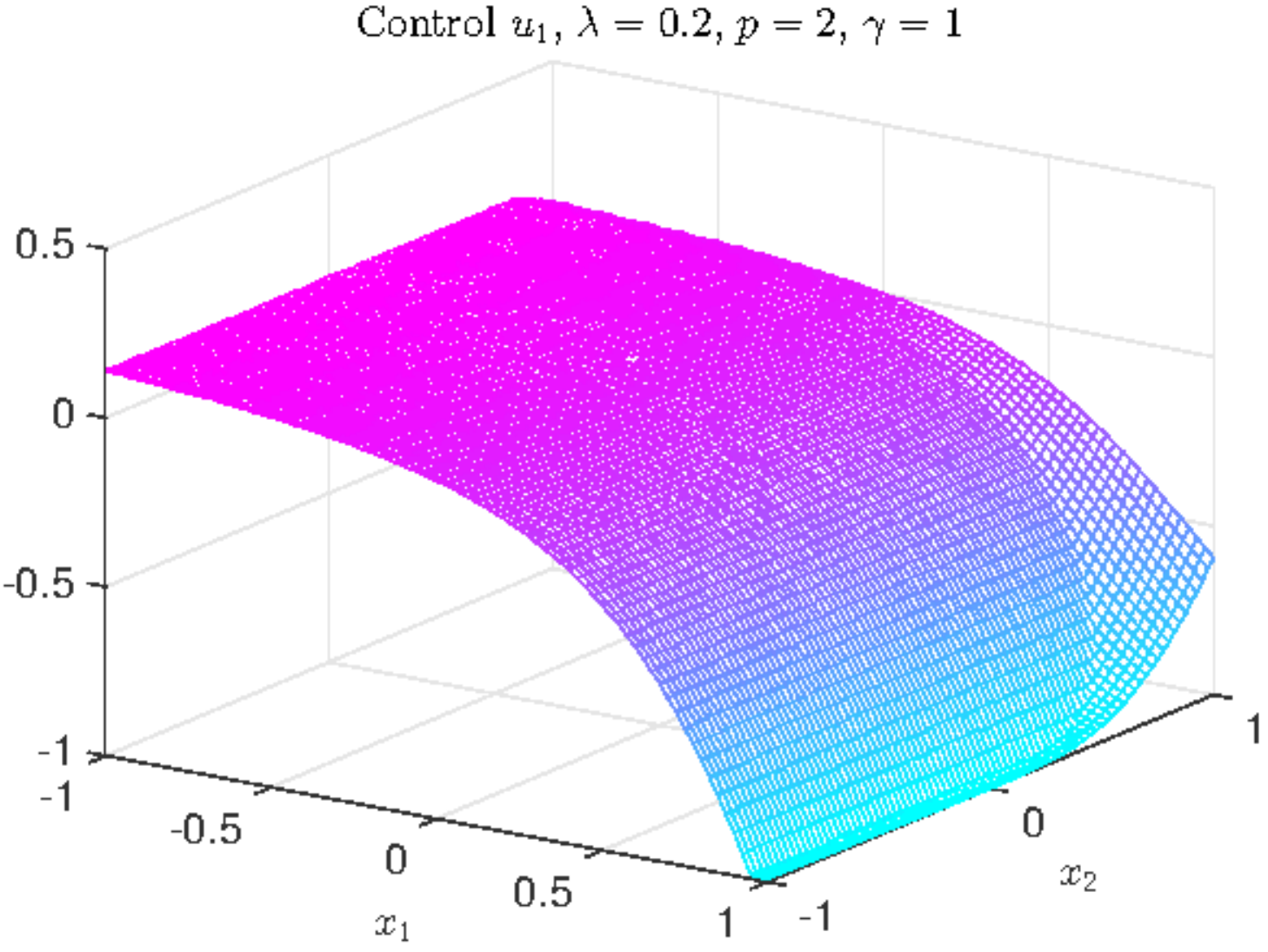}
\includegraphics[width=0.495\textwidth]{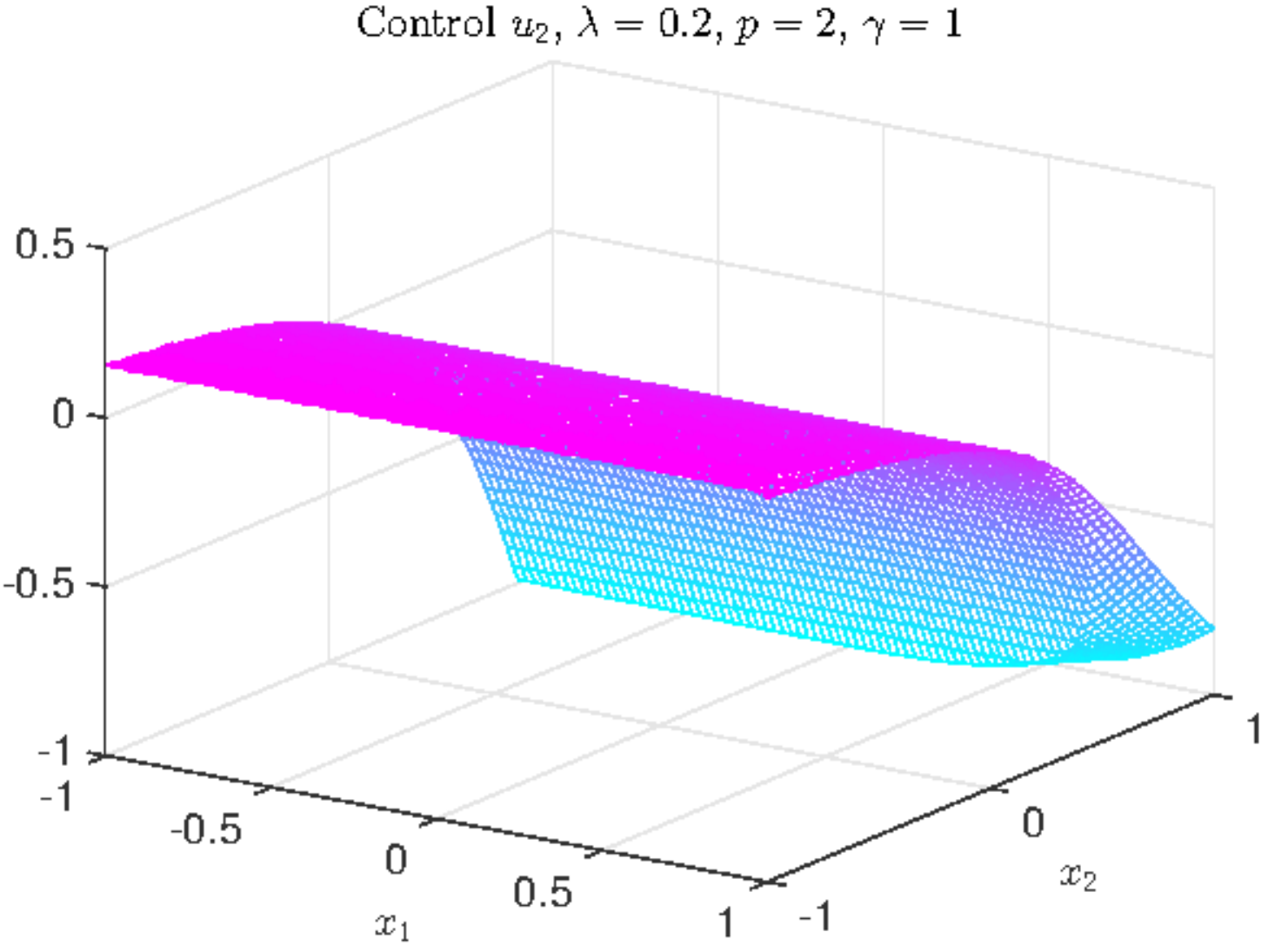}
\vskip 5mm
\centering
\includegraphics[width=0.495\textwidth]{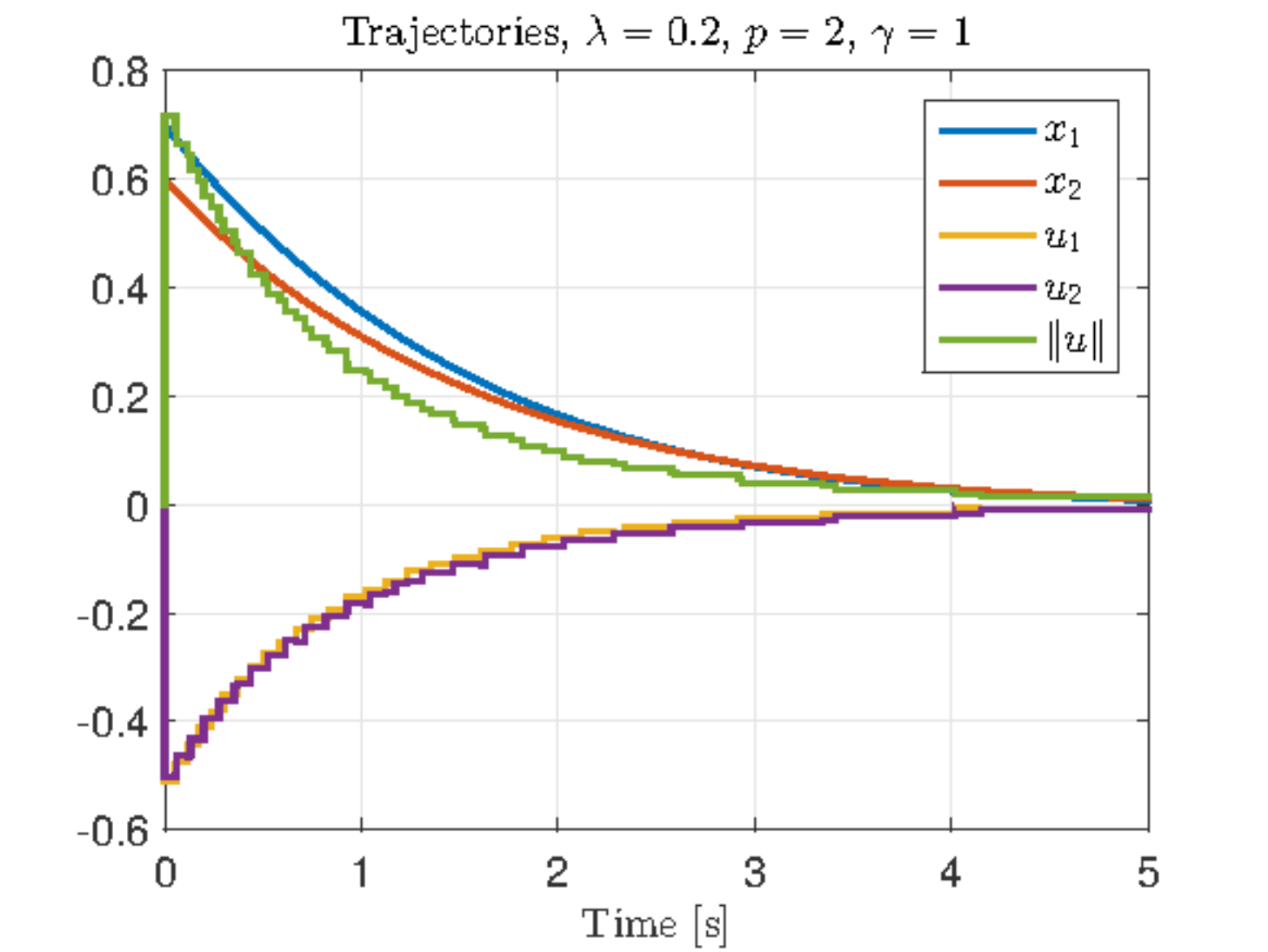}
\caption{Infinite horizon control of nonlinear dynamics with $p=2$. Row 1 and 2: value function $V(x_1,x_2)$, $\|u\|_2$ -norm of the optimal control, optimal controls $u_1(x_1,x_2)$ and $u_2(x_1,x_2)$, over the state space  $\Omega=[-1,1]^2$. Row 3: trajectories for the initial condition $(x_1(0),x_2(0))=(-0.75,-0.6).$}\label{nonlinp2}
\end{figure}

\begin{figure}[!h]
\includegraphics[width=0.495\textwidth]{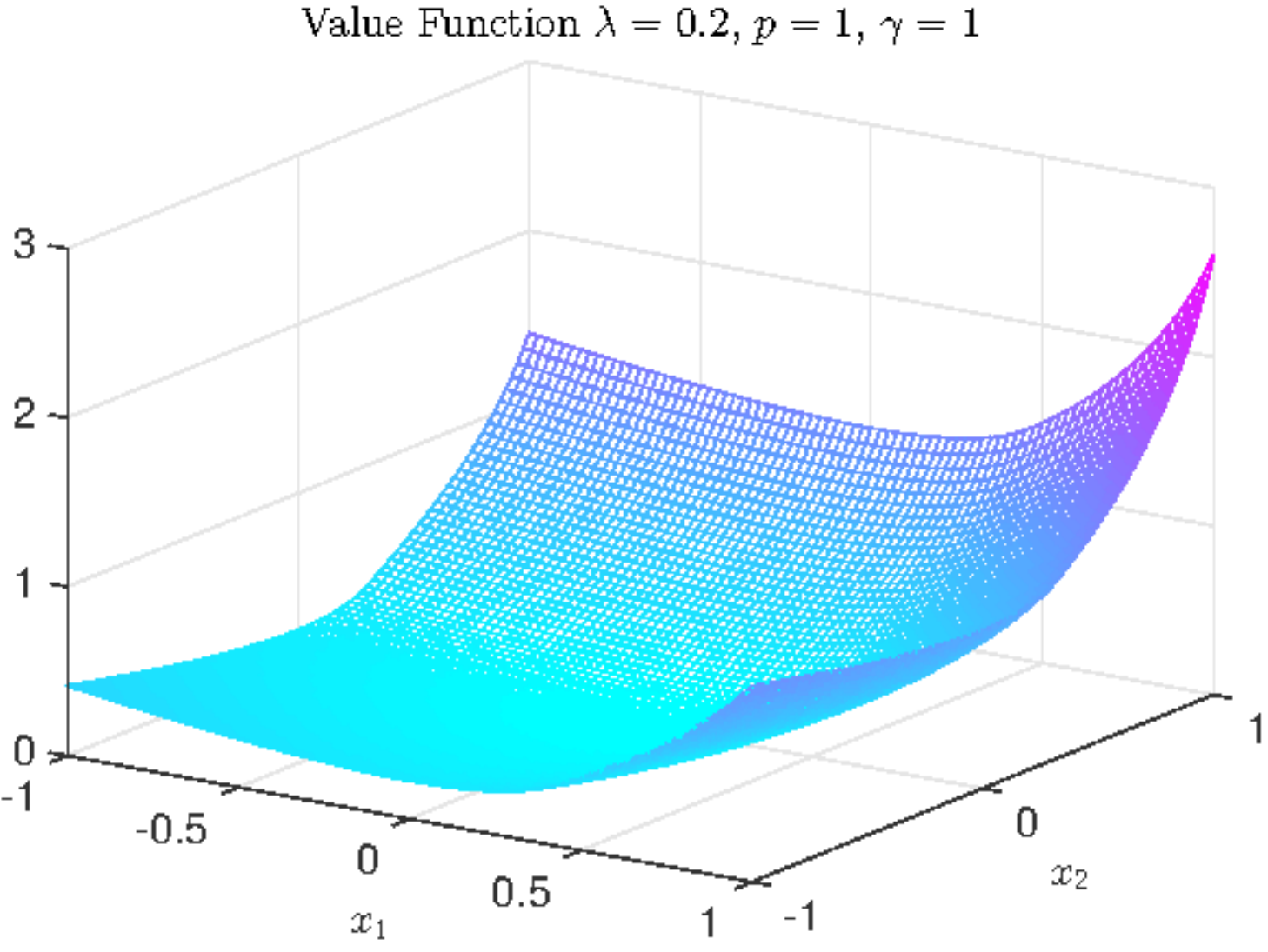}
\includegraphics[width=0.495\textwidth]{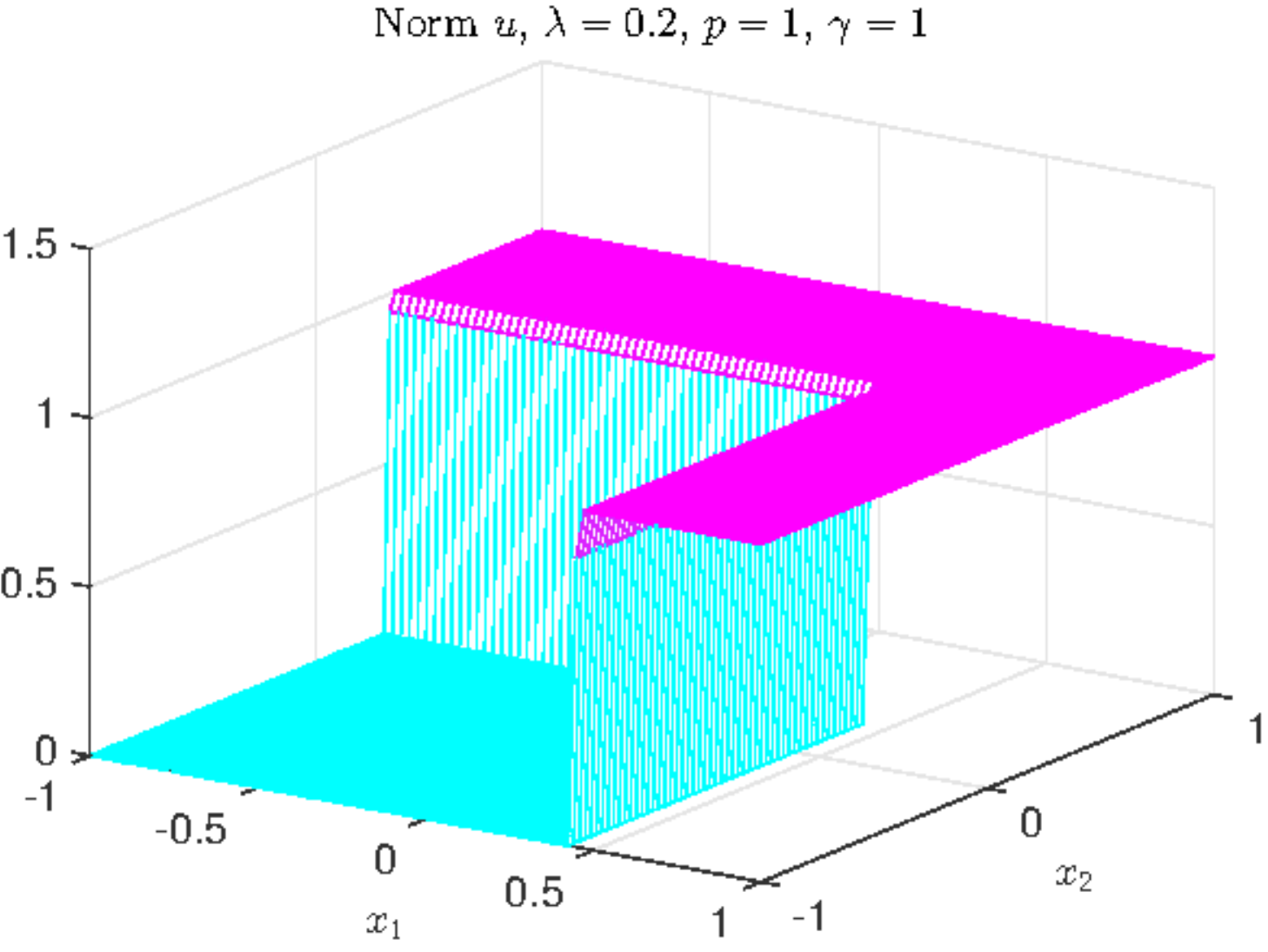}
\vskip 5mm
\includegraphics[width=0.495\textwidth]{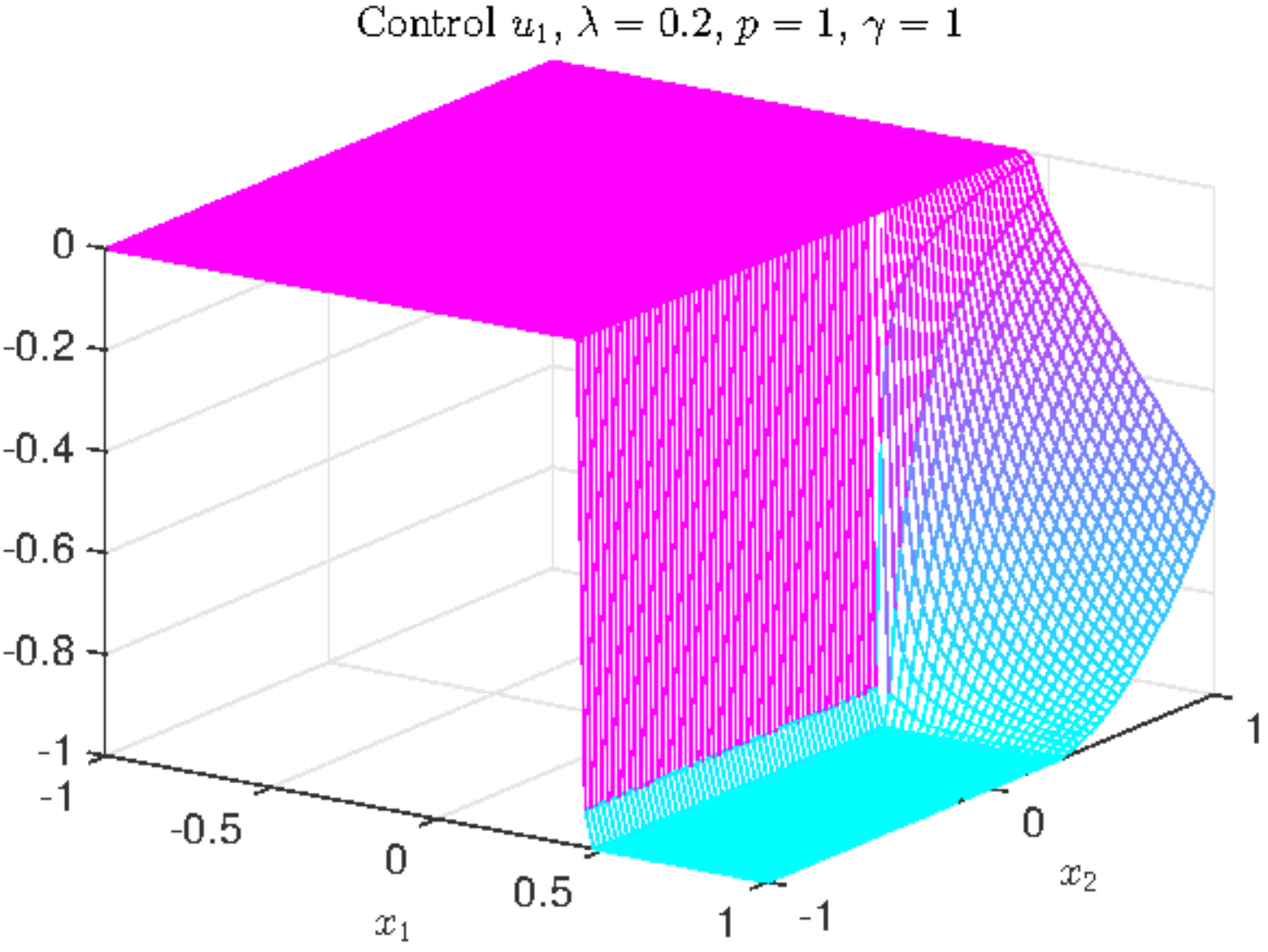}
\includegraphics[width=0.495\textwidth]{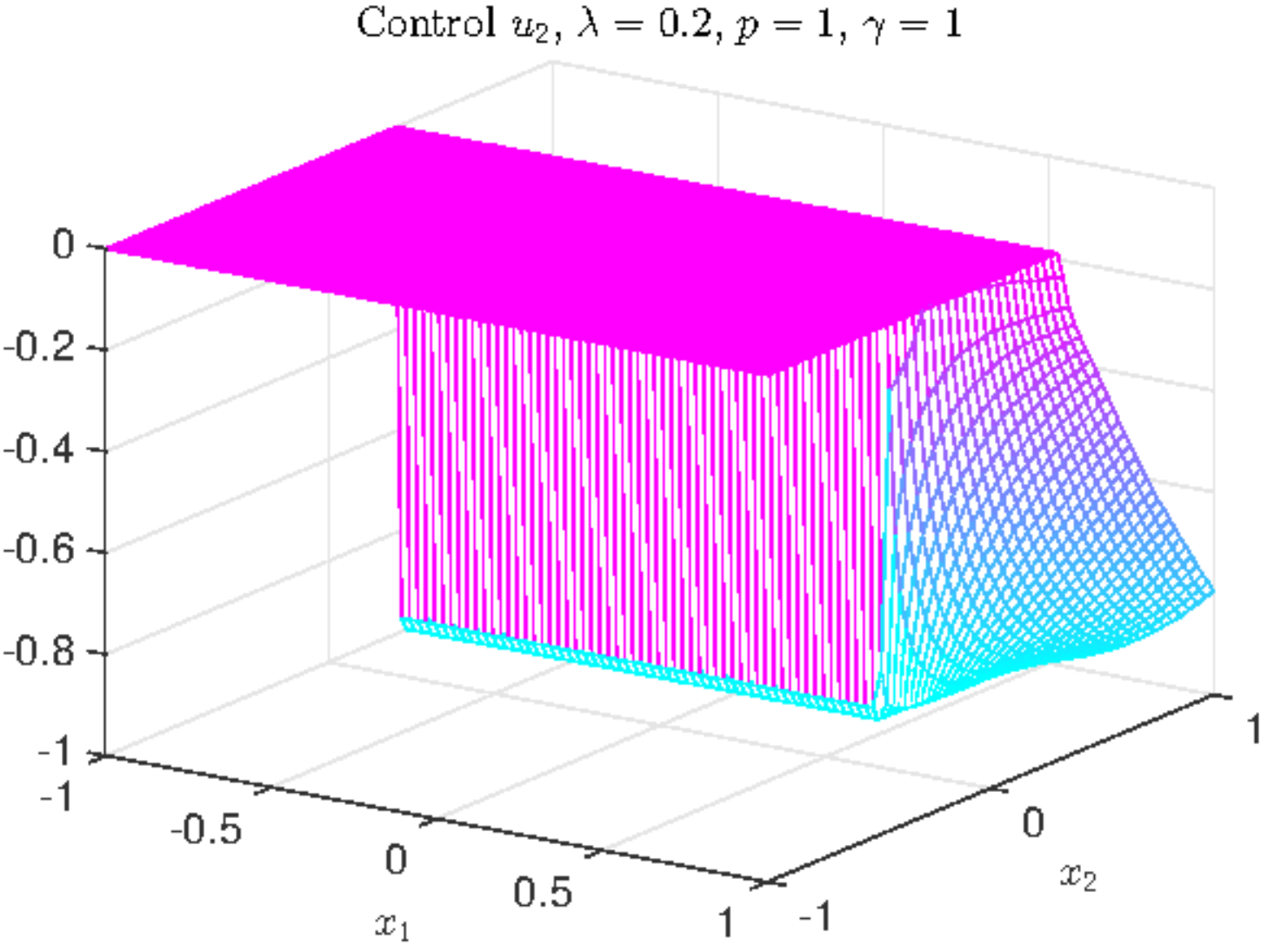}
\vskip 5mm
\centering
\includegraphics[width=0.495\textwidth]{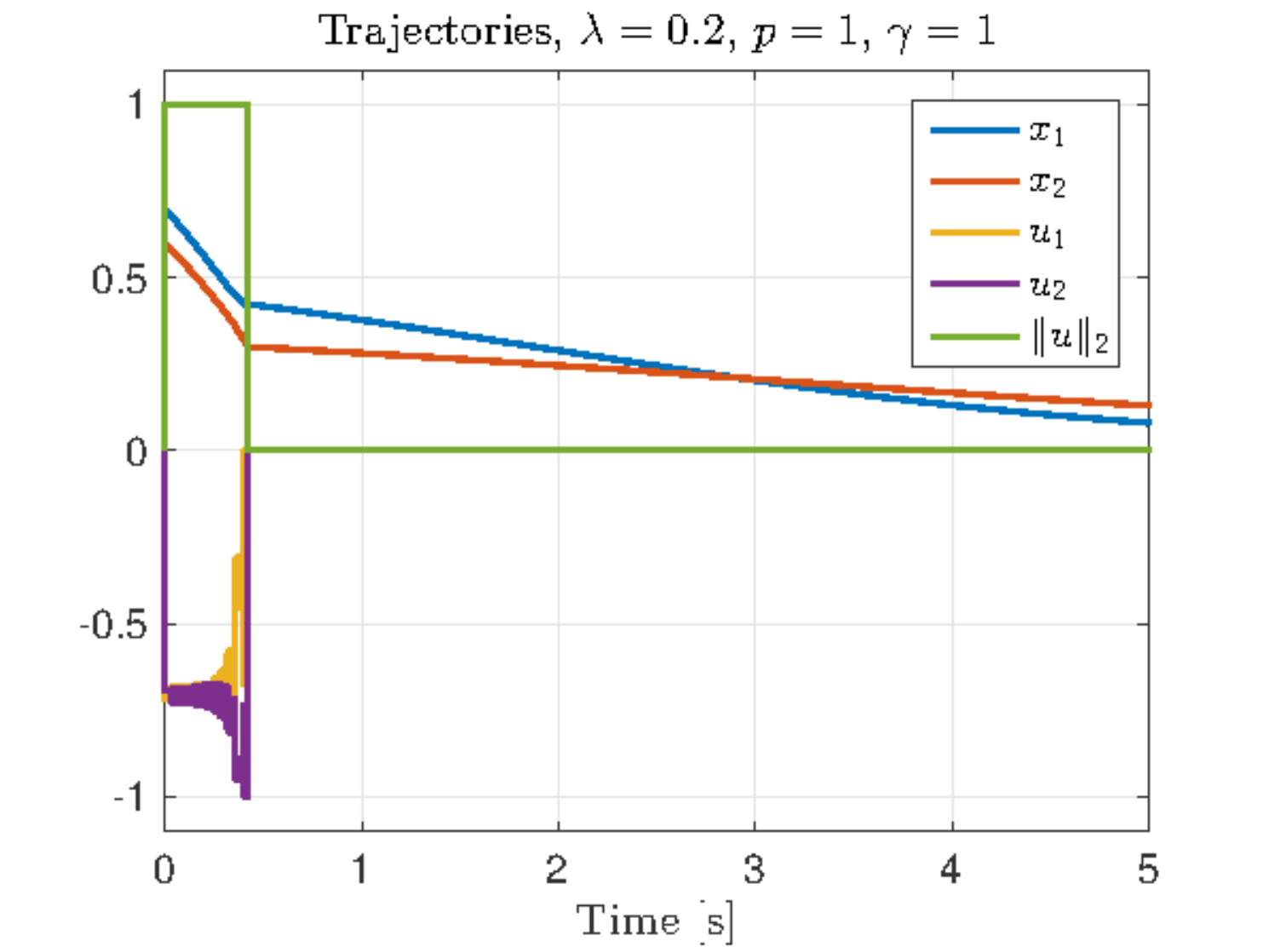}
\caption{Infinite horizon control of nonlinear dynamics with $p=1$. Row 1 and 2: value function $V(x_1,x_2)$, $\|u\|_2$ -norm of the optimal control, optimal controls $u_1(x_1,x_2)$ and $u_2(x_1,x_2)$, over the state space  $\Omega=[-1,1]^2$. Row 3: trajectories for the initial condition $(x_1(0),x_2(0))=(-0.75,-0.6).$}\label{nonlinp1}
\end{figure}

\begin{figure}[!h]
\includegraphics[width=0.495\textwidth]{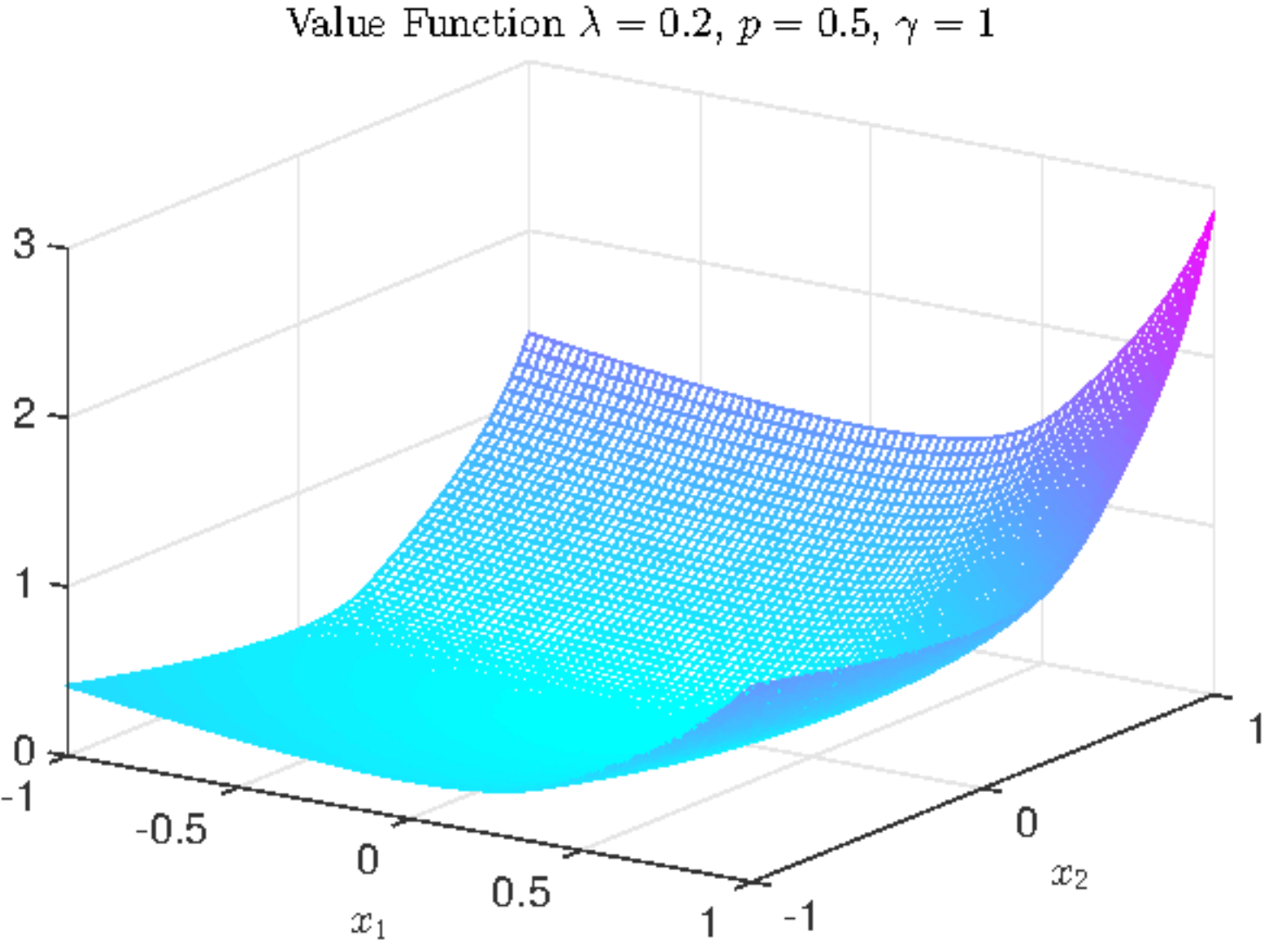}
\includegraphics[width=0.495\textwidth]{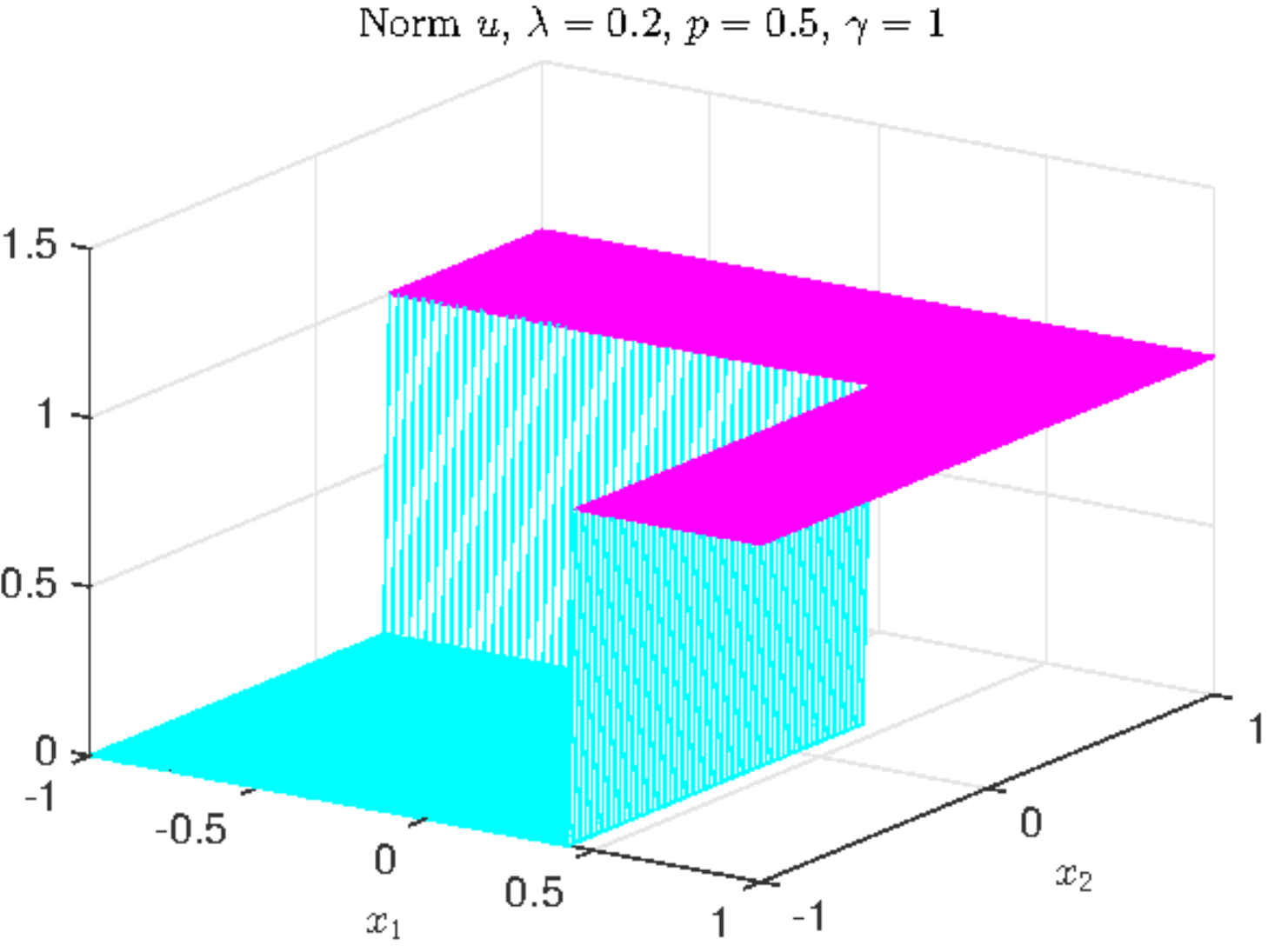}
\vskip 5mm
\includegraphics[width=0.495\textwidth]{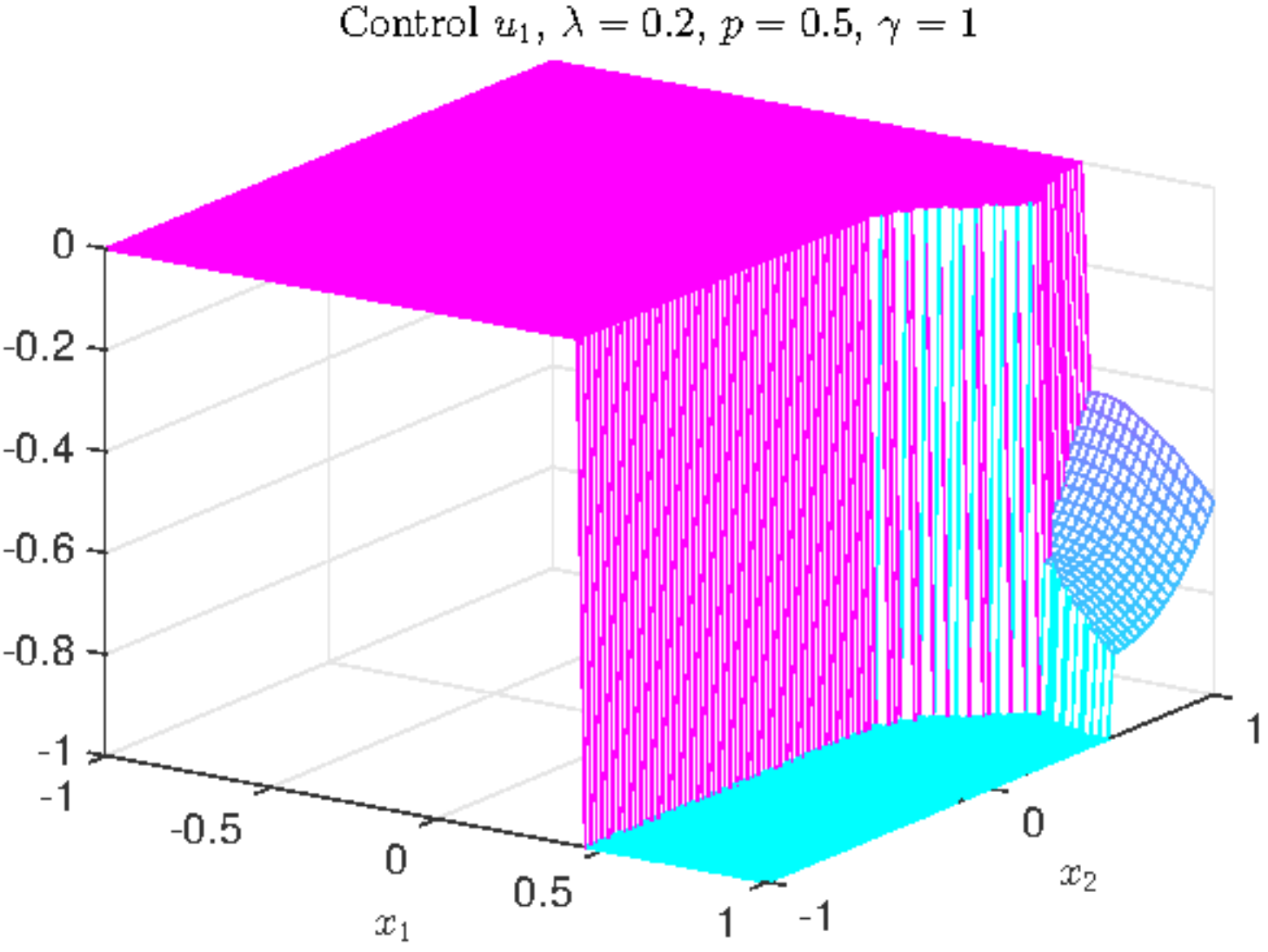}
\includegraphics[width=0.495\textwidth]{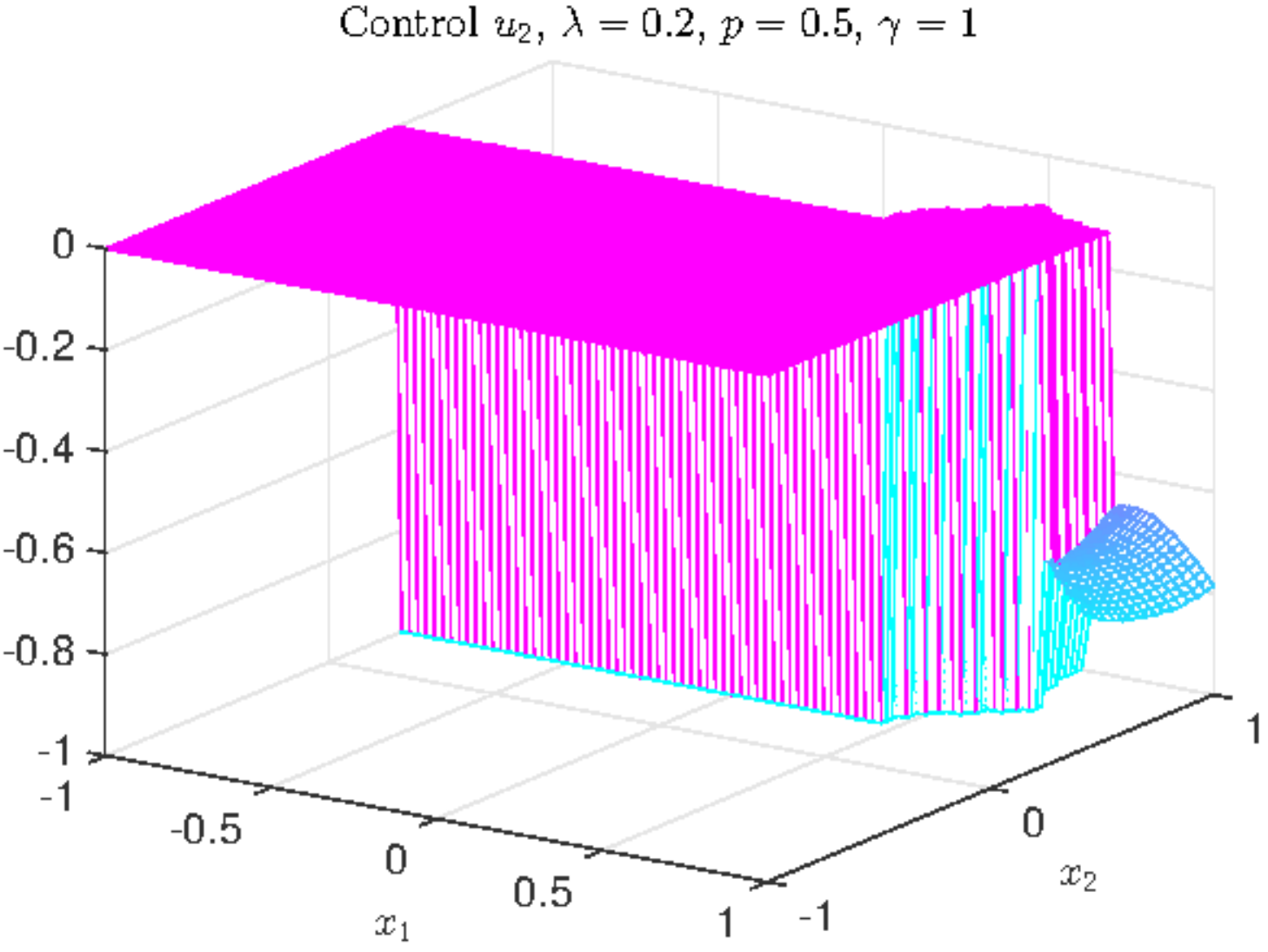}
\vskip 5mm
\centering
\includegraphics[width=0.495\textwidth]{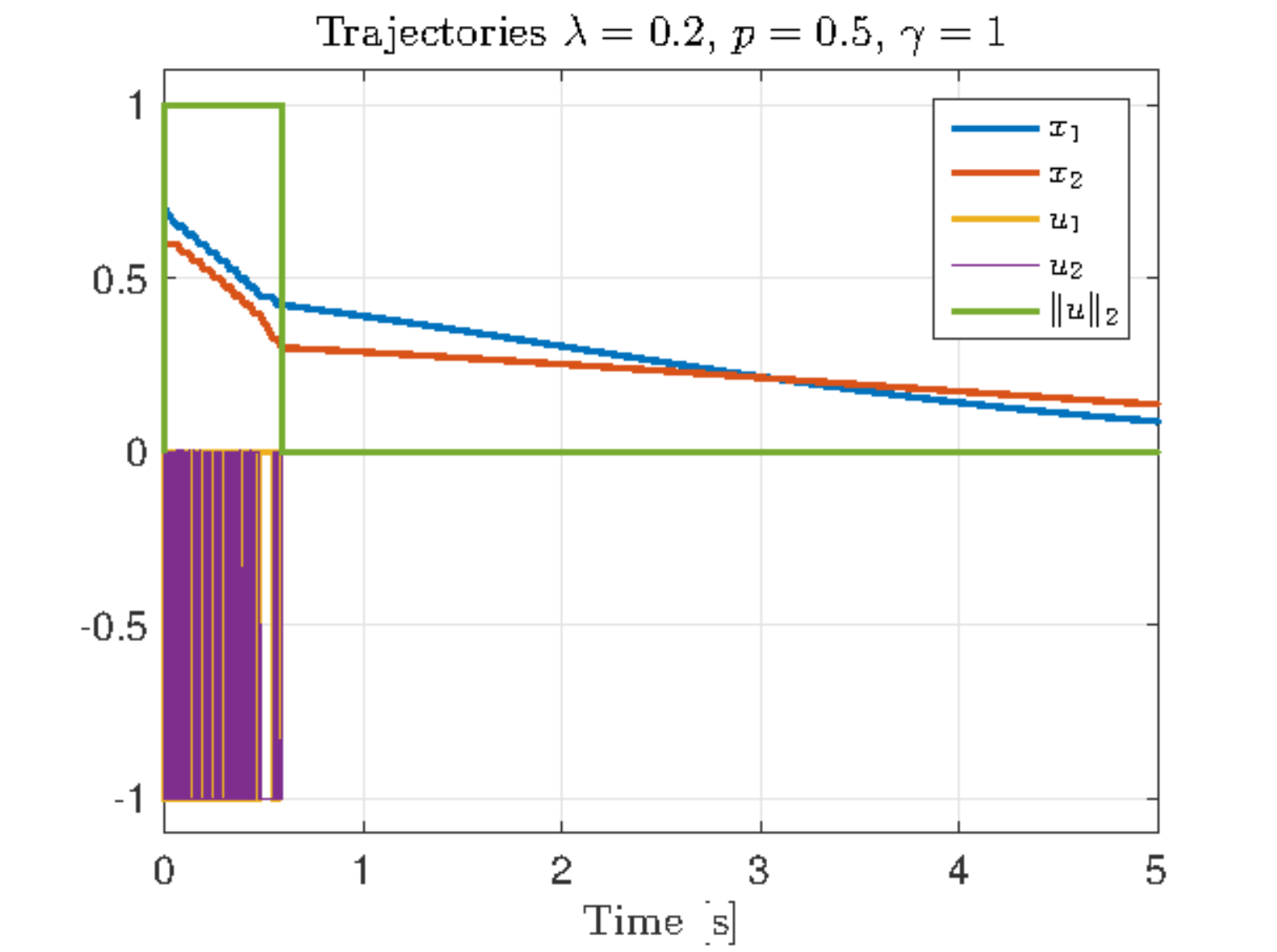}
\caption{Infinite horizon control of nonlinear dynamics with $p=0.5$. Row 1 and 2: value function $V(x_1,x_2)$, $\|u\|_2$ -norm of the optimal control, optimal controls $u_1(x_1,x_2)$ and $u_2(x_1,x_2)$, over the state space  $\Omega=[-1,1]^2$. Row 3: trajectories for the initial condition $(x_1(0),x_2(0))=(-0.75,-0.6).$}\label{nonlinp05}
\end{figure}
%%%%%%%%%%%%%%%%%%%%%%%%%%%%%%%%%%%%%%%%%%%%%%%%%%%%%%%%%%%%%%%%%%%%%%%%%%%%%%%%%%%%%%%%
%%%%%%%%%%%%%%%%%%%%%%%%%%%%%%%%%%%%%%%%%%%%%%%%%%%%%%%%%%%%%%%%%%%%%%%%%%%%%%%%%%%%%%%%

%%%%%%%%%%%%%%%%%%%%%%%%%%%%%%%%%%%%%%%%%%%%%%%%%%%%%%%%%%%%%%%%%%%%%%%%%%%%%%%%%%%%%%%%
%%%%%%%%%%%%%%%%%%%%%%%%%%%%%%%%%%%%%%%%%%%%%%%%%%%%%%%%%%%%%%%%%%%%%%%%%%%%%%%%%%%%%%%%
\section{Concluding remarks}

We have studied both theoretically and computationally,  sparsity properties of infinite horizon, optimal control problems with $L^p$-penalties, $p\in(0,1]$, in the controls. Existence results for fully  nonlinear dynamics were obtained in the convex case when $p=1$.  In the nonconvex case, existence results are presented systems which are affine in the controls. In both cases, first-order optimality conditions are also obtained. They are used as the basis to analyze the sparsity structure of the optimal controls.
In future work we shall  explore the relationship between sparse optimal control problems and stabilization properties. It can be noted, as in the undiscounted infinite horizon problem for Eikonal dynamics, that $L^1$ optimal control problems lead to controllers which can stabilize the system in finite time. Additionally, we plan to develop a  numerical approximation framework for optimal controllers in the nonconvex case.
%%%%%%%%%%%%%%%%%%%%%%%%%%%%%%%%%%%%%%%%%%%%%%%%%%%%%%%%%%%%%%%%%%%%%%%%%%%%%%%%%%%%%%%%
%%%%%%%%%%%%%%%%%%%%%%%%%%%%%%%%%%%%%%%%%%%%%%%%%%%%%%%%%%%%%%%%%%%%%%%%%%%%%%%%%%%%%%%%

\appendix  %This command ends the counting of sections.
%\section*{Appendix:  Instructions for Appendices}
\renewcommand\thesection{\Alph{section}}
\renewcommand\theequation{\Alph{section}.\arabic{equation}}
\section*{Appendix}
The proof of Theorem \ref{THMexistence} is given as follows.
\begin{proof}
Let $(T_k)_{k\in\N}$ be an arbitrary sequence of positive numbers with
\[
T_k<T_{k+1},\ \forall\,k\in\N\ \text{and}\ T_k\ra\infty\ \text{as}\ k\ra\infty.
\]
For each $k\in\N$, consider the following optimal control problems {\bf $(P_k)$} defined on the finite time interval $[0,T_k]$:
\begin{equation}\label{ocTk}
 \inf_{u\in L^{\infty}(0,T_k;U)} J_k(x,u),
\end{equation}
where
\[
J_k(x,u)=\int^{T_k}_0 e^{-\lambda s}\ell(y(s),u(s))ds
\]
and $(y(\cdot),u(\cdot))$ satisfies the dynamical system \eqref{ds}. Due to Lemma \ref{LEMocT}, there exists an optimal control $u_k$ for \eqref{ocTk}. Denote by $y_k$ the optimal trajectory corresponding to $u_k$ and
\[
\eta_k(s):=\int^s_0 e^{-\lambda(s'-s)}\ell(y_k(s'),u_k(s'))ds',\ \forall\,s\in [0,T_k].
\]
Then $\eta_k$ is optimal for \eqref{ocAugmented} with the final time $T_k$.
The aim is to construct an admissible control $\bar u$ defined on the infinite time interval $[0,\infty)$. The corresponding extended state will be denoted by $(\bar y,\bar \eta)$. The construction is described step by step as follows.

Consider at first the sequence $(y_k,\eta_k)_{k\in\N}$ on the time interval $[0,T_1]$. By the same arguments as in the proof of Lemma \ref{LEMocT}, there exists a subsequence $(y_{k,1},\eta_{k,1})$ of $(y_k,\eta_k)$ such that
\[
(y_{k,1},\eta_{k,1})\ra (\bar y,\bar \eta)  \text{ uniformly in } [0,T_1],\ \text{as}\ k\ra\infty;
\]
\[
(\dot y_{k,1},\dot{\eta}_{k,1})\ra (\dot{\bar y},\dot{\bar \eta})  \text{ weakly in } L^1(0,T_1;\R^{d+1}),\ \text{as}\ k\ra\infty,
\]
for some $(\bar y,\bar \eta)$ satisfying \eqref{dsAugmented} on $[0,T_1]$. Note that each control $u_{k,1}$ is an optimal control for a corresponding problem $(P_{n(k,1)})$ of the form \eqref{ocTk} for some integer $n(k,1)\geq 1$ on the time interval $[0,T_{n(k,1)}]$, and each $\eta_{k,1}$ is optimal for the problem \eqref{ocAugmented} with $T_{n(k,1)}$.

Now for the second step consider the sequence $(y_{k,1},\eta_{k,1})_{k\in\N}$ on the time interval $[0,T_2]$ for $k\geq 2$. Analogously to the previous step there exists $(y_{k,2},\eta_{k,2})$ of $(y_{k,1},\eta_{k,1})$ such that
\[
(y_{k,2},\eta_{k,2})\ra (\bar y,\bar \eta)  \text{ uniformly in } [0,T_2],\ \text{as}\ k\ra\infty;
\]
\[
(\dot y_{k,2},\dot{\eta}_{k,2})\ra (\dot{\bar y},\dot{\bar \eta})  \text{ weakly in } L^1(0,T_2;\R^{d+1}),\ \text{as}\ k\ra\infty,
\]
for $(\bar y,\bar \eta)$ satisfying \eqref{dsAugmented} on $[0,T_2]$. Here $(\bar y,\bar \eta)$ coincides with the one constructed in the previous step on $[0,T_1]$, and it is denoted again by the same symbol. Each control $u_{k,2}$ is an optimal control for a corresponding problem $(P_{n(k,2)})$ of the form \eqref{ocTk} for some integer $n(k,2)\geq 2$ on the time interval $[0,T_{n(k,2)}]$, and each $\eta_{k,2}$ is optimal for the problem \eqref{ocAugmented} with $T_{n(k,2)}$.

By repeating this procedure, we construct $(\bar y,\bar \eta)$ satisfying \eqref{dsAugmented} on the infinite time interval $[0,\infty)$. Simultaneously we obtain a countable family of $(y_{k,i},\eta_{k,i})$ for $i,k\in\N$, $k\geq i$. Each $u_{k,i}$ is an optimal control for problem $(P_{n(k,i)})$ of the form \eqref{ocTk} for some integer $n(k,i)\geq i$ on the time interval $[0,T_{n(k,i)}]$, and each $\eta_{k,i}$ is optimal for the problem \eqref{ocAugmented} with $T_{n(k,i)}$. Moreover, for all $i\in\N$,
\[
(y_{k,i},\eta_{k,i})\ra (\bar y,\bar \eta)  \text{ uniformly in } [0,T_i],\ \text{as}\ k\ra\infty;
\]
\[
(\dot y_{k,i},\dot{\eta}_{k,i})\ra (\dot{\bar y},\dot{\bar \eta})  \text{ weakly in } L^1(0,T_i;\R^{d+1}),\ \text{as}\ k\ra\infty,
\]
Let us take the diagonal sequence $(u_{k,k})_{k\in\N}$ and denote
\[
\tilde u_k=u_{k,k},\ \tilde y_k=y_{k,k},\ \tilde \eta_k=\eta_{k,k},\ \text{and}\ n_k=n(k,k),\ \forall\,k\in\N.
\]
Then the following properties hold:
\begin{enumerate}[(i)]
 \item 
 $\forall\,k\in\N$, the control $\tilde u_k$ is defined on the time interval $[0,T_{n_k}]$ with $n_k\geq k$, $\tilde u_k$ is an optimal control for the problem {\bf $(P_{n_k})$} of the form \eqref{ocTk}, and $\tilde \eta_k$ is optimal for the problem \eqref{ocAugmented} with $T_{n_k}$.
 \item
 $\forall\,i\in\N$, we have
\[
(\tilde y_k,\tilde \eta_k) \ra (\bar y,\bar \eta)  \text{ uniformly in } [0,T_i]\ \text{as}\ k\ra\infty,
\]
\[
(\dot{\tilde y}_k,\dot{\tilde \eta}_k) \ra (\dot{\bar y},\dot{\bar \eta})\ \text{weakly in}\ L^1(0,T_i;\R^{d+1})\ \text{as}\ k\ra\infty.
\]
\item 
There exists $\bar u\in L^{\infty}(0,\infty;U)$ such that
\[
\dot{\bar y}(s)=f(\bar y(s),\bar u(s))\ \text{and}\ \dot{\bar \eta}(s)\geq \lambda \bar \eta(s)+\ell(\bar y(s),\bar u(s)),\ \forall\,s\in (0,\infty).
\]
\end{enumerate}
We proceed with proving that $\bar u$ is an optimal control for the problem \eqref{ocinfini}. Arguing by contradiction, if $\bar u$ is not optimal for \eqref{ocinfini}, there exists $\e>0$ and $(\tilde y,\tilde u)$ satisfying \eqref{ds} such that
\begin{equation}\label{EqL10}
J(x,\tilde u)+\e< J(x,\bar u).
\end{equation}
By the assumption {\bf (H2)}, there exists $(y^*,u^*)$ satisfying \eqref{ds} such that 
\[
J(x,u^*)<+\infty.
\]
Since $\tilde u_k$ is optimal for ($P_{n_k}$), we have for any $k\in\N$ that
\[
J_{n_k}(x,\tilde u_k)\leq J_{n_k}(x,u^*)\leq J(x,u^*).
\]
Then for any $N>0$ and any $n_k$ with $T_{n_k}\geq N$,
\[
\int^N_0 e^{-\lambda s}\ell(\tilde y_k(s),\tilde u_k(s))ds\leq J_{n_k}(x,\tilde u_k)\leq J(x,u^*),
\]
and thus
\[
e^{-\lambda N}\tilde \eta_k(N)\leq J(x,u^*).
\]
By taking $k\ra\infty$, it holds that
\[
e^{-\lambda N}\bar \eta(N)\leq J(x,u^*).
\]
We thus obtain,
\[
\int^N_0 e^{-\lambda s}\ell(\bar y(s),\bar u(s))ds\leq e^{-\lambda N}\bar \eta(N)\leq J(x,u^*),\ \forall\,N>0,
\]
and therefore
\[
\int^{\infty}_0 e^{-\lambda s}\ell(\bar y(s),\bar u(s))ds\leq J(x,u^*).
\]
There exists $k_1\in\N$ such that for any $k\geq k_1$
\begin{equation}\label{EqL11}
\int^{\infty}_{T_{k}}e^{-\lambda s}\ell(\bar y(s),\bar u(s))ds<\frac{\e}{2}.
\end{equation}
Due to the fact that $\tilde \eta_k\ra \bar \eta$ uniformly, there exists $k_2\geq k_1$ such that for all $k\geq k_2$,
\[
e^{-\lambda T_{n_{k_1}}}\bar \eta(T_{n_{k_1}})\leq e^{-\lambda T_{n_{k_1}}}\tilde \eta_k(T_{n_{k_1}})+\frac{\e}{2},
\]
which implies that
\begin{equation}\label{EqL12}
J_{n_{k_1}}(x,\bar u)\leq J_{n_{k_1}}(x,\tilde u_k)+\frac{\e}{2}.
\end{equation}
Since $\tilde u_{k_2}$ is optimal for {\bf $(P_{n_{k_2}})$},
\begin{equation}\label{EqL13}
J_{n_{k_2}}(x,\tilde u_{k_2})\leq J_{n_{k_2}}(x,\tilde u)\leq J(x,\tilde u).
\end{equation}
Note that $n_{k_1}\leq n_{k_2}$, and thus together with \eqref{EqL12} and \eqref{EqL13}, we have
\begin{eqnarray*}
J_{n_{k_1}}(x,\bar u) &\leq& J_{n_{k_1}}(x,\tilde u_{k_2})+\frac{\e}{2}
\leq J_{n_{k_2}}(x,\tilde u_{k_2})+\frac{\e}{2} 
\leq J(x,\tilde u) + \frac{\e}{2}.
\end{eqnarray*}
Finally, by \eqref{EqL11} we deduce that
\begin{eqnarray*}
J(x,\bar u) &=& J_{n_{k_1}}(x,\bar u)+ \int^{\infty}_{T_{n_{k_1}}}e^{-\lambda s}\ell(\bar y(s),\bar u(s))ds \\
&\leq& J_{n_{k_1}}(x,\bar u)+ \frac{\e}{2}
\leq J(x,\tilde u) + \e,
\end{eqnarray*}
which contradicts \eqref{EqL10}. Hence $\bar u$ is an optimal control for \eqref{ocinfini}, which ends the proof.
\end{proof}

\end{document}